\def \a {\alpha}
\numberwithin{equation}{section}
\def \esp {[0,T]\times \R}
\def \ess {\mbox{esssup}}
\def \I {{\cal I}}
\def \z {\zeta}
\def \eps {\epsilon}
\def \us {\Upsilon}
\def \d {\delta}
\def \wr {\mbox{w.r.t.}}
\def \ms {\medskip}
\def \bs {\bigskip}
\def \R{\mathbb{R}}
\def \cF {{\cal F}}
\def \cC {{\cal C}}
\def \th {\theta}
\def \qq {\qquad}
\def \dt {\Delta}
\def \bal{\begin{array}{l}}
\def \nf {\infty}
\def \ea{\end{array}}
\def \q {\quad}
\def \beq {\begin{eqnarray}}
\def \eeq {\end{eqnarray}}
\def \espyz {\cS^2\times\cH^2(\ell^2)}
\def \nd {\noindent}
\def \xt {X^{t,x}}
\def \cP {\mathbf{P}}
\def \E {\mathbf{E}}
\def \cS {{\cal S}}
\def \cL {{\cal L}}
\def \cH {{\cal H}}
\def \cA {{\cal A}}
\def \chl {{\cal H}(\ell^2)}
\def\t {\tau}
\def \pl {\Pi_g}
\def \ccp {\cC_p^{1,2}}
\newcommand{\be}{\begin{equation}}
\newcommand{\ee}{\end{equation}}
\newcommand{\bea}{\begin{eqnarray}}
\newcommand{\eea}{\end{eqnarray}}
\newcommand{\lb}{\label}
\newcommand{\ed}{ \end{document}}
\newcommand {\brl}{\begin{array}{l}}
\def \rw {\rightarrow}
\def \vy {\overrightarrow{y}}
\newtheorem{thm}{Theorem}[section]
\newtheorem{rem}{Remark}[section]
\newtheorem{propo}{Proposition}[section]
\newtheorem{defi}{Definition}[section]
\newtheorem{lem}{Lemma}[section]
\newtheorem{cor}{Corollary}[section]
\begin{document}
\title{Systems of Integro-PDEs with Interconnected Obstacles and Multi-Modes Switching Problem Driven by Lévy Process.}\author{
Said Hamad\`ene\thanks{Universit\'e du
Maine, LMM, Avenue Olivier Messiaen, 72085 Le Mans, Cedex 9, France ; e-mail: hamadene@univ-lemans.fr} \,\, and \, Xuxhe Zhao \thanks{School of Mathematics and Statistics, Xidian University, Xi'an 710071, PR China and Universit\'e du Maine, LMM, Avenue Olivier Messiaen, 72085 Le Mans, Cedex 9, France ; e-mail: sosmall129@hotmail.com.}}

\date{\today}
\maketitle
\begin{abstract}
In this paper we show existence and uniqueness of the solution in
viscosity sense for a system of nonlinear $m$ variational
integral-partial differential equations with interconnected
obstacles  whose coefficients $(f_i)_{i=1,\cdots, m}$ depend on
$(u_j)_{j=1,\cdots,m}$. From the probabilistic point of view, this
system is related to optimal stochastic switching problem when the
noise is driven by a L\'evy process. The switching costs depend on $(t,x)$. 
As a by-product of the main result we obtain that the value function of the 
switching problem is continuous and unique 
solution of its associated Hamilton-Jacobi-Bellman 
system of equations. The main tool we used is the
notion of systems of reflected BSDEs with oblique reflection driven
by a L\'evy process.
\medskip

\noindent \textbf{Keywords:} integral-partial differential equations ; interconnected obstacles ; viscosity solutions ; Lévy process ;
multi-modes switching ; reflected backward stochastic
differential equations.
\medskip

\noindent {\bf AMS subject classification }(2010): 49L25 ; 60G40 ; 35Q93 ; 91G80.
\end{abstract}
\setcounter{table}{1}
\section{Introduction}
In this paper, we study the existence and uniqueness of a solution
to the system of integro-partial differential equations (IPDEs in
short) of the following form: $\forall i=1,\cdots,m$, \be
\label{edpintro}\left\{
    \begin{array}{ll}
      min\{u_i(t,x)-\max\limits_{j\neq i}(u_j(t,x)-g_{ij}(t,x));\\\qq\qq
      -\partial_t u_i(t,x)-{\cal L} u_i(t,x)-f_i(t,x,(u_1,u_2,\cdots, u_m)(t,x))\}=0,\,\,(t,x)\in [0,T)\times \R,\\
      u_i(T,x)=h_i(x)
    \end{array}
    \right.\ee
where ${\cal L}$ is a generator associated with a stochastic
differential equation whose noise is driven by a L\'evy process $L:=(L_t)_{t\leq T}$
defined on a filtered probability space $(\Omega, {\cF},
(\cF)_{t\leq T}, \cP)$ and then $\cL$ is a non local operator (see (\ref{gener}) for its definiton).

This system is related to a stochastic
optimal switching problem since a particular case is actually its associated Hamilton-Jacobi-Bellman system.

Let us describe briefly the stochastic optimal switching problem. Let $(t,x)\in \esp$ and
$(X^{t,x}_s)_{s\leq T}$ be the solution of the following standard
stochastic differential equation:
$$dX^{t,x}_s=b(s,X^{t,x}_s)ds+
\sigma(s,X^{t,x}_{s-}) dL_s, \,\,\forall s\in [t,T]\mbox{ and
}X^{t,x}_s=x \mbox{ for }s\leq t.$$ Next let $(a_s)_{s\in [0,T]}$ be
the following pure jump process:
$$a_s:=\alpha_0\mathbbm{1}_{\{\theta_0\}}(s)+\sum\limits^\infty_{j=1}\alpha_{j-1}\mathbbm{1}_{]\theta_{j-1},\theta_j]}(s), \,\forall s\le T,$$ where
$\{\theta_j\}_{j\geq 0}$ is an increasing sequence of stopping times
with values in $[0,T]$ and $(\alpha_j)_{j\geq 0}$ are random
variables with values in $A:=\{1,\dots,m\}$ (the set of modes to which the controller can switch) such that for any $j\ge 0$,
$\alpha_j$ is $\cF_{\theta_j}-$measurable. The pair
$\Upsilon=((\theta_j)_{j\geq 0},(\alpha_j)_{j\geq 0})$ is called a
strategy of switching and when it satisfies $\cP[\theta_n<T, \forall
n\ge 0]=0$ it is moreover said admissible. Finally we denote by ${\cal A}^i_t$ the set of admissible
strategies such that $\a_0=i$ and $\theta_0=t$.

Assume next that for any $i=1,\dots,m$,
$f_i(t,x,(y_i)_{i=1,...,m})=f_i(t,x)$, i.e., $f_i$ does not depend
on $(y_i)_{i=1,m}$. Let $\us$ be an admissible
strategy of ${\cal A}^i_t$ with which one associates a payoff given by:
\be\lb {defpayoff}
J^{a}(t,x)=J(\us)(t,x):=\E[\int_t^Tf_{a(s)}(s,X^{t,x}_s)ds-\sum\limits_{j\ge
1}g_{\alpha_{j-1},\alpha_j}(\theta_j,X^{t,x}_{\theta_j})
\mathbbm{1}_{\{\theta_j<T\}}+h_{a_T}(X^{t,x}_T)]\ee where $f_{a(s)}(s,X^{t,x}_s)=\sum_{i\in A}f_i(s,X^{t,x}_s)1_{[a(s)=i]}$, $s\in [t,T]$,  (resp. $h_{a_T}(X^{t,x}_T) =\sum_{i\in A}h_i(X^{t,x}_T)1_{[a_T=i]}$) is the instantaneous (resp. terminal) payoff when the strategy $a$ (or $\Upsilon$) is implemented while $g_{i\ell}$ is the switching cost function when moving from mode $i$ to mode $\ell$ ($i,\ell \in A$, $i\neq \ell$). Next let us define the optimal payoff when starting from mode $i\in A$ at time $t$ by \be \label{valfunc} u_i(t,x):=\inf_{\us \in {\cal
A}_t^i}J(\us)(t,x) \ee

As a by-product of our general result we obtain that the value functions $(u_i(t,x))_{i\in A}$ (or optimal payoffs) of this switching problem is continuous and of polynomial growth and is the unique solution in viscosity sense of system (\ref{edpintro}). A similar problem has been already considered by Biswas et al. \cite{Bis}, however one should emphazise that in that work, the switching costs are constant and do not depend on $(t,x)$. This latter feature makes the problem easier to handle
since one can directly work with the functions $u_i$ defined in (\ref {defpayoff})-(\ref{valfunc}).
\ms

Optimal switching problems are well documented in the literature (see e.g.
\cite{Bis, carmonaludkovski, djehicheetal, chassagneux,
hamadenejeanblanc, hamadenezhang, brahimsaid, hutang, pham1, pham2,
zervos1, zervos2} etc. and the references therein), especially in connection with mathematical finance, energy market, etc.
\ms

The main objective and novelty of this paper is to study system
(\ref{edpintro}) in the general case, i.e., to allow for $f_i$ to depend on $(u_i)_{i=1,m}$ and
the switching costs $g_{ij}$ to depend on $(t,x)$ and to show that
(\ref{edpintro}) has a unique solution. Our method is based on the link
of (\ref{edpintro}) with systems of reflected BSDEs with inter-connected
obstacles driven by a L\'evy process, i.e., systems of the following
form: $\forall j=1,\dots,m$, $\forall s\leq T$, \be
\label{finitude1}\left\{
    \begin{array}{ll}
          Y^{j,t,x}_s=h_j(X^{t,x}_T)+\int^T_s f_j(r,X^{t,x}_r,(Y^{k,t,x}_r)_{k\in A},(U_r^{j,t,x,i})_{i\geq 1})dr-\sum\limits^\infty_{i=1}\int^T_s U^{j,t,x,i}_r dH^{(i)}_r +K^{j,t,x}_T-K^{j,t,x}_s\\
     Y^{j,t,x}_s\geq \max\limits_{k\neq j} \{Y^{k,t,x}_s-g_{jk}(s,X^{t,x}_s)\} \mbox{ and }
[Y^{j,t,x}_s-\max\limits_{k\neq j}\{Y^{k,t,x}_s-g_{jk}(s,X^{t,x}_s)\}]dK^{j,t,x}_s=0\\
    \end{array}
    \right.\ee
where $((H^{(i)}_s)_{s\leq T})_{i\geq 1}$ are the Teugels martingales
associated with the L\'evy process $L$. Under appropriate
assumptions on the data $(f_i)_{i=1,\dots,m}$, $(h_i)_{i=1,\dots,m}$
and $(g_{ij})_{i,j=1,\dots,m}$ we show existence and uniqueness of
$\cF_s$-adapted processes $((Y^{j,t,x}_s, (U^{j,t,x,i}_s)_{i\geq 1},
K^{j,t,x}_s)_{s\leq T})_{j\in A}$ which satisfy (\ref{finitude1}). Additionaly
there exist deterministic continuous functions $(u_j(t,x))_{j\in A}$
such that: \be\label{introfeynmankac} \forall s\in [t,T],
Y^{j,t,x}_s=u_j(s,X^{t,x}_s),\ee and we show that $(u_j(t,x))_{j\in A}$ is the
unique solution of (\ref{edpintro}).

In the Brownian framework of noise, the link between systems of PDEs
with interconnected obstacles and systems of reflected BSDEs with
oblique reflection has been already stated in several papers (see
e.g. \cite{saidmorlais, hutang}, etc.). Therefore in this work we extend this link to
the setting where the noise is driven by a L\'evy process.

This article is organized as follows. In Section 2 we collect the main
results on Teugels martingales. Section 3 is devoted to reflected
BSDEs driven by a Lévy process (existence and uniqueness of a
solution and comparison) and their connection with IPDEs with
obstacle. We finally consider the system of reflected BSDEs with
inter-connected obstacles (\ref{finitude1}) and we show existence
and uniqueness of a solution of this system when, mainly, the
functions $(f_i)_{i\in A}$ are Lipschitz in $((y_i)_{i\in A},\z)$ and
the switching costs verify the so-called non free loop property. We
construct a mapping which is a contraction in an appropriate Banach
space and which has a unique fixed point which provides the solution of
system (\ref{finitude1}). Section 4 is devoted to the study of
system of IPDEs (\ref{edpintro}). Contrarily to system of reflected
BSDEs (\ref{finitude1}), we only consider the case when the functions
$f_i$, $i\in A$, do not depend on $\z$. We first show that this
system has a solution in viscosity sense when for any $i\in A$, the
function $f_i$ is non-decreasing $\wr$ to $y_k$ ($k\neq i$) when the
other components are fixed. We then give a comparison result of
subsolutions and supersolutions of system (\ref{edpintro}) based on Jensen-Ishii's Lemma on PDEs with non-local term \cite{barles,Bis}.
As usual this comparison result insures continuity and uniqueness
of the solution of system (\ref{edpintro}). Finally we provide
another existence and uniqueness result of a solution for system
(\ref{edpintro}) in the case when for any $i\in A$, $f_i$ is
decreasing $\wr$ $y_k$ for any $k\neq i$ when the other components
are fixed. This result is deeply based on the first
existence and uniqueness of the solution of system (\ref{edpintro}) and,
on the other hand, the existence and uniqueness result of a solution of
system of reflected BSDEs (\ref{finitude1}). According to our
knowledge it cannot be obtained by using PDE techniques only. At the
end of this paper we give an Appendix where two complementary results  are
collected. The first one is related to the representation of the
$Y^j_s$ of the solution of system (\ref{finitude1}) as a value
function of a switching problem. As for the second one, it provides
an  equivalent definition of the viscosity solution of system
(\ref{edpintro}) which is somehow of local type. \qed
\section{Preliminaries}
A Lévy process is an $\R$-valued  RCLL (for right continuous with left limits)
stochastic process $L=\{L_t,t\geq 0\}$ defined on a probability
space $(\Omega,\cal F,\cP)$ with stationary and independent
increments ($L_0=0$) and stochastically continuous.

For $t\le T$ let us set ${\cal F}_t={\cal G}_t\vee\cal N$ where
${\cal G}_t:=\sigma\{L_s,0\leq s\leq t\}$ and $\cal N$ is the $\cP$-null sets of $\cal F$, therefore $\{ {\cal F}_t\}_{t\leq T}$ is
complete and right continuous. Next by $\cal P$ we denote the
$\sigma$-algebra of predictable processes on $[0,T]\times \Omega$ and finally for any RCLL process $(\Gamma_t)_{t\leq }$ we denote by
$\Gamma_{t-}:=\lim_{s\nearrow t}\Gamma_s$ and $\Delta
\Gamma_t:=\Gamma_{t}-\Gamma_{t-}$ its jump at $t$, $t\in (0,T]$.

We now introduce the following spaces:
\ms

\noindent (a) $\cS^2:= \{\varphi:=\{{\varphi_t},0\leq t\leq T\}$ is an
$\R$-valued, ${\cal F}_t$-adapted RCLL process  s.t.
$\E(\sup\limits_{0\leq t\leq T}{\vert {\varphi_t}\vert}^2)<\infty\}$
; $\cA^2$ is the subspace
of $\cS^2$ of non-decreasing continuous processes null at $t=0$\, ; \\
\noindent (b) $H^2:= \{\varphi:=(\varphi_t)_{t\leq T}$ is an $\R$-valued,
${\cal F}_t$-progressively measurable process such that $\E(\int^T_0{\vert {\varphi_t}\vert}^2dt)<\infty\} $;\\
\noindent (c) $\ell^2:=  \{x=(x_n)_{n\geq 1}$ is an $\R$-valued sequence
s.t.
${\Vert x\Vert}^2:=\sum\limits_{i=1}^\infty x^2_i<\infty\}$\,;\\
\noindent (d) $\cH^2(\ell^2):= \{\varphi=(\varphi_t)_{t\leq
T}=((\varphi^n_t)_{n\geq 1})_{t\leq T}$ such that $\forall n\geq 1$,
$\varphi^n$ is a ${\cal P}$-measurable process and
$$\bal \E(\int^T_0{\Vert \varphi_t\Vert}^2
dt)=\sum\limits^\infty_{i=1} \E(\int^T_0{\vert {\varphi^i_t}\vert}^2
dt)<\infty\}\,;\ea$$ (e) ${\cal L}^2: =\{\xi$, an $\R$-valued and $\cF_T$-measurable random variable such that
$\E[\vert\xi\vert^2]<\infty\}$ ;\\
\noindent (f) $\Pi_g$ is the space of deterministic functions $v(t,x)$ from
$\esp$ into $\R$ of polynomial growth, i.e., such that for some positive constants $p$ and $C$ one has,
$$
|v(t,x)|\leq C(1+|x|^p),\,\, \forall (t,x)\in \esp \,;$$
(g) $\ccp:=\cC^{1,2}(\esp)\cap \Pi_g$.\qed
\ms

Let us now recall the Lévy-Khintchine formula of a Lévy process
$(L_t)_{t\leq T}$ whose characteristic exponent is $\Psi$, $i.e.$,
$$\forall t\leq T \mbox{ and }\theta \in \R,\,\,\E(e^{i\theta L_t})=e^{t\Psi(\theta)}$$ with
$$\begin{array}{ll}
\Psi(\theta)&=ia\theta-\frac{1}{2}\varpi^2 \theta^2+\int_{\R} (e^{i\theta x}-1-i\theta x\mathbbm{1}_{(\vert x\vert < 1)} )\Pi(dx)\\
&=ia\theta-\frac{1}{2}\varpi^2  \theta^2+\int_{\vert x\vert
\geq 1}(e^{i\theta x}-1){\Pi(dx)}+\int_{0<\vert x\vert <1}(e^{i\theta
x}-1-i\theta x)\Pi(dx)
\end{array}$$
where $a\in\R$, $\varpi \geq 0$ and $\Pi$ is a $\sigma$-finite measure
on $\R^*:=\R-\{0\}$ (we set $\Pi(\{0\})=0$ and then the domain of integration is
the whole space), called the Lévy
measure of $L$, verifying \be\label{cdtint} \bal \int_{\R} (1\wedge x^2)\Pi (dx)<\infty.\ea\ee
Moreover we assume that $\Pi$ satisfies the following assumption:
\be\label{2cdtint}\bal \exists \epsilon >0 \mbox{ and } \lambda >0~ \mbox{ such that }
\int_{(-\epsilon,\epsilon)^c}e^{\lambda|x|}\Pi(dx)<+\infty.\ea\ee
Conditions (\ref{cdtint})-(\ref{2cdtint}) imply that for any $i\geq 2$,
\be\bal\label{levy1}\int_\R|x|^i\Pi(dx)<\infty\ea\ee
and then the process $(L_t)_{t\le T}$ have moments of any order.\\

Next following Nualart-Schoutens \cite{nualart} we define, for every $i\geq 1$,
the so-called power-jump processes $L^{(i)}$ and their compensated
version $Y^{(i)}$, also called Teugels martingales, as follows: $\forall t\leq T$,
$$\begin{array}{l}
L^{(1)}_t=L_t \mbox{ and for }i\ge 2,\,\,
L^{(i)}_t=\sum_{s\leq t}(\Delta L_s)^i,Y^{(i)}_t=L^{(i)}_t-t\E(L^{(i)}_1).
\end{array}$$
Note that for any $i\geq 2$ and $t\leq T$, $\E(L^{(i)}_t)=t\int_{\R}x^i\Pi(dx)$ exists, i.e., is defined and belongs to $\R$ (\cite{dellacheriemeyer}, pp.29).

An orthonormalization procedure can be applied to the martingales
$Y^{(i)}$ in order to obtain a set of pairwise strongly orthonormal
martingales ${(H^{(i)})}_{i\geq 1}$ such that each $H^{(i)}$
is a linear combination of $(Y^{(j)})_{j=1,i}$, i.e.,
$$
H^{(i)}=c_{i,i}Y^{(i)}+...+c_{i,1}Y^{(1)}.
$$
It has been shown in \cite{nualart} that the
coefficients $c_{i,k}$ correspond to the orthonormalization of the
polynomials $1, x, x^2, . . .$ with respect to the measure
$\nu(dx)=x^2\Pi(dx)+
\varpi^2\delta_0(dx)$ ($\delta_0$ is the Dirac measure at $0$). Specifically the
polynomials $(q_i)_{i\geq 0}$ defined by, for any $i\ge 1$, 
$$q_{i-1}(x)=c_{i,i}x^{i-1}+c_{i,i-1}x^{i-2}+...+c_{i,1}
$$ and satisfying
$$\bal \int_\R q_n(x)q_m(x)\nu(dx)=\delta_{nm},\,\, \forall n,m\geq
0.\ea$$ Next let us set
$$\begin{array}{l}
p_i(x)=xq_{i-1}(x)=c_{i,i}x^{i}+c_{i,i-1}x^{i-1}+...+c_{i,1}x \mbox{ and }\\
\tilde
p_i(x)=x(q_{i-1}(x)-q_{i-1}(0))=c_{i,i}x^{i}+c_{i,i-1}x^{i-1}+...+c_{i,2}x^2.\end{array}
$$
Then for any $i\geq 1$ and $t\leq T$ we have:
$$\begin{array}{ll}
H^{(i)}_t&=\sum_{0<s\leq t}\{c_{i,i}(\Delta
L_s)^i+...+c_{i,2}(\Delta
L_s)^2\}+c_{i,1}L_t-t\E[c_{i,i}(L_1)^{(i)}+...+c_{i,2}(L_1)^{(2)}]-
tc_{i,1}\E(L_1)\\\\
{}&=q_{i-1}(0)L_t+\sum_{0<s\leq t}\tilde p_i(\Delta
L_s)-t\E[\sum_{0<s\leq 1}\tilde p_i(\Delta L_s)]-tq_{i-1}(0)\E(L_1).
\end{array}
$$
As a consequence, for any $t\leq T$ and $i\ge 1$, $\Delta H^{(i)}_t=p_i(\Delta L_t)$ for each $i\geq
1$. In the particular case of $i=1$, we obtain
\be \label{formc11}\bal H^{(1)}_t=c_{1,1}(L_t-t\E(L_1)) \mbox{ with }c_{1,1}=[\int_\R x^2\Pi(dx)+\varpi^2]^{-\frac{1}{2}}\mbox{
and }\E[L_1]=a+\int_{|x|\geq 1}x\Pi(dx).\ea\ee Finally note that for any
$i,j\geq 1$ the predictable quadratic variation process of $H^{(i)}$ and $H^{(j)}$ is\\ $\langle H^{(i)},H^{(j)}\rangle_t=\delta_{ij}t, \forall t\leq T$.\qed
\begin{rem} If $\Pi=0$, we are in the classical Brownian case and all non-zero
degree polynomials $q_i(x)$ will vanish, giving $H^{(i)}=0$,
$i\geq 2$. On the other hand, if $\Pi$ only has mass at $1$, we are in the Poisson case
and once more $H^{(i)}=0$, $i\geq 2$. Both cases are degenerate
ones in this Lévy process framework. \qed
\end{rem}

The main result in the paper by Nualart-Schoutens  \cite{nualartschoutens} is the
following representation property which allows for developping the BSDE theory in this Lévy framework.
\begin{thm}\lb{nusch}(\cite{nualartschoutens}, pp.118). Let $\xi$
be a random variable of $\cL^2$, then there exists a process
$Z=(Z^i)_{i\geq 1}$ that belongs to $\cH^2(\ell^2)$ such that:
$$\bal
\xi=\E(\xi)+\sum_{i\geq 1}\int_0^TZ^i_sdH^{(i)}_s.\qed \ea$$
\end{thm}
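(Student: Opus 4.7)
The plan is to establish this representation by a standard density plus orthogonal complement argument adapted to the L\'evy setting. Let $\mathcal{M}$ denote the subspace of $\cL^2$ consisting of random variables of the form $c+\sum_{i\geq 1}\int_0^T Z^i_s\, dH^{(i)}_s$ for some constant $c\in\R$ and some $Z=(Z^i)_{i\geq 1}\in\cH^2(\ell^2)$. The strong orthonormality relation $\langle H^{(i)},H^{(j)}\rangle_t=\delta_{ij}t$ yields the It\^o isometry
\[
\E\Big[\Big|\sum_{i\geq 1}\int_0^T Z^i_s\, dH^{(i)}_s\Big|^2\Big]=\E\Big[\int_0^T\|Z_s\|^2\, ds\Big],
\]
which implies that $\mathcal{M}$ is a closed linear subspace of $\cL^2$. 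Taking expectations shows that for $\xi\in\mathcal{M}$ the constant $c$ is forced to be $\E(\xi)$. It therefore suffices to prove $\mathcal{M}=\cL^2$, and for this it is enough to exhibit a total subset of $\cL^2$ contained in $\mathcal{M}$.

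The natural candidates, as in Nualart-Schoutens \cite{nualartschoutens}, are polynomials in the values $(L_{t_1},\dots,L_{t_n})$ of the L\'evy process at finite families of times in $[0,T]$. The density of this class in $\cL^2(\cF_T)$ is precisely where the exponential moment hypothesis (\ref{2cdtint}) plays its decisive role: together with (\ref{cdtint}) it forces $L_t$ to have a moment generating function on a neighbourhood of the origin, hence its distribution is moment-determinate and polynomial functionals of $(L_{t_k})_k$ are dense in $\cL^2$ of the law of $(L_{t_k})_k$. By the independent-increments structure of $L$ and a monotone class argument, this yields density in $\cL^2(\cF_T)$.

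To see that these polynomial functionals belong to $\mathcal{M}$, I would expand them recursively via It\^o's formula. The crucial ingredient is that the orthonormalization inverts to express each power-jump martingale $Y^{(i)}_t$ as a linear combination of $(H^{(j)})_{j\leq i}$, so that $L_t$ itself lies in $\mathcal{M}$; then, applying It\^o's formula to products of the form $\prod_{k=1}^n L_{t_k}^{p_k}$ produces, after accounting for the pure-jump contributions via the identity $\Delta H^{(i)}_t=p_i(\Delta L_t)$, stochastic integrals against the $H^{(i)}$'s plus lower-order polynomial functionals. An induction on total degree then shows every such polynomial lies in $\mathcal{M}$. The main technical obstacle is precisely the bookkeeping of the jump terms: without the identity $\Delta H^{(i)}_t=p_i(\Delta L_t)$ identifying the jumps of the Teugels martingales with the values at $\Delta L_t$ of the orthonormalized polynomials, the pure-jump contributions arising from It\^o's formula could not be absorbed into stochastic integrals against the $H^{(i)}$. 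The resulting combinatorial verification is routine but lengthy, and combined with the closedness of $\mathcal{M}$ established above it yields the theorem.
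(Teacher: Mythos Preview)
The paper does not actually give a proof of this theorem: it is stated as a citation of the main result of Nualart--Schoutens \cite{nualartschoutens} and closed with a \qed. So there is no ``paper's own proof'' to compare against; your proposal is a sketch of the original Nualart--Schoutens argument rather than of anything in this paper.

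That said, your outline is a faithful summary of the standard proof. The three ingredients you identify --- closedness of the space of representable random variables via the It\^o isometry for the orthonormal Teugels martingales, density in $\cL^2(\cF_T)$ of polynomials in $(L_{t_1},\dots,L_{t_n})$ coming from the exponential-moment hypothesis (\ref{2cdtint}) and moment-determinacy, and the inductive It\^o-formula computation reducing such polynomials to stochastic integrals against the $H^{(i)}$ --- are exactly the pillars of the Nualart--Schoutens proof. One minor remark: in \cite{nualartschoutens} the authors first establish a chaotic representation (iterated integrals) and deduce the predictable representation from it, whereas you go directly to the predictable representation; both routes rely on the same density and isometry facts, and yours is the more economical one for the statement as formulated here.
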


\section{ Systems of Reflected BSDEs with Oblique Reflection driven by a Lévy process}

\subsection{Reflected BSDE driven by a Lévy process and their relationship with IPDEs}
As a consequence of Theorem \ref{nusch}, and as in the framework of
Brownian noise only, one can study standard BSDEs or reflected ones.
The result below related to existence and uniqueness of a solution
for a reflected BSDE driven by a Lévy process, is proved in
\cite{renotmani}. Indeed let us introduce a triple $(f, \xi,S)$
that satisfies: \ms

\noindent {\bf Assumptions (A1):}\\
\noindent (i) $\xi$ a  random variable of $\cL^2$ which stands for
the terminal value ;\\
(ii) $f$: $[0,T]\times\Omega\times \R\times \ell^2\longrightarrow
\R$  is a function such that the process $(f(t,0,0))_{t\leq T}$
belongs to $H^2$ and there exists a constant $\kappa>0$ verifying
$$
|f(t,y,\z)-f(t,y',\z')|\leq \kappa (|y-y'|+\|\z-\z'\|_{\ell^2}), \mbox{
for every }t,y,y',\z \mbox{ and }\z'.
$$
(iii) $S:={(S_t})_{0\leq t\leq T}$ is a process of $\cS^2$ such that
$ S_T\leq\xi$, $\cP-a.s.$, and whose jump times are inaccessible stopping
times. This in particular implies that for any $t\leq T$,
$S^p_t=S_{t-}$, where $S^p$ is the predictable projection of $S$
(see e.g. \cite{dellacherie}, pp.58
) for more details on those notions.\\

In \cite{renotmani}, the authors have proved the following result
related to existence and uniqueness of the solution of one barrier reflected BSDEs
whose noise is driven by a Lévy process.
\begin{thm} Assume that the triple $(f,\xi,S)$
satisfies Assumptions $(A1)$. Then there exists a unique triple of
processes $(Y,U,K):=((Y_t,U_t,K_t))_{t\leq T}$ with values in
$\R\times \ell^2\times \R^{+}$ such that: \be
\label{finitude}\left\{
     \begin{array}{ll}
    (Y,U,K)\in \cS^2\times \chl\times \cA^2;\\
     Y_t=\xi +\int^T_t f(s,Y_s,U_s)ds+K_T-K_t -
     \sum\limits^{\infty}_{i=1} \int^T_t U^i_s dH^{(i)}_s,\,\forall t\leq T; \\
    Y_t\geq S_t,\,\forall\,\, 0\leq t\leq T \mbox{ and }\int^T_0 (Y_t-S_t)dK_t=0,\,\cP-a.s.
    \end{array}
    \right.\ee
The triple $(Y,U,K)$ is called the solution of the reflected BSDE
associated with $(f,\xi,S)$. $\qed$
\end{thm}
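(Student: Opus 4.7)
I would follow the classical penalization scheme of El Karoui--Kapoudjian--Pardoux--Peng--Quenez, adapted to the L\'evy setting where the martingale representation is provided by Theorem \ref{nusch}. The argument decomposes into a short uniqueness part and a longer existence part built via increasing approximations.

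\textbf{Uniqueness.} Given two candidate solutions $(Y,U,K)$ and $(Y',U',K')$, I would apply It\^o's formula to $|Y_t-Y'_t|^2$. The stochastic integral against the Teugels martingales $(H^{(i)})_{i\geq 1}$ is a true martingale because $U,U'\in \chl$, and it vanishes in expectation. The Lipschitz hypothesis on $f$ gives the usual quadratic bound on the driver term, while the Skorokhod condition $(Y-S)dK=0$ combined with $Y'\geq S$ yields $(Y-Y')dK\leq 0$, and symmetrically $(Y'-Y)dK'\leq 0$. Gronwall's lemma forces $Y=Y'$ indistinguishably; the martingale representation then gives $U=U'$ in $\chl$ and finally $K=K'$.

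\textbf{Existence by penalization.} For each $n\geq 1$ I would consider the non-reflected BSDE
\[
Y^n_t=\xi+\int_t^T f(s,Y^n_s,U^n_s)\,ds+n\int_t^T(Y^n_s-S_s)^-ds-\sum_{i\geq 1}\int_t^T U^{n,i}_s\,dH^{(i)}_s,
\]
whose existence and uniqueness in $\cS^2\times \chl$ is obtained by a standard Picard iteration, Theorem \ref{nusch} providing at each step the $U^n$-component as the integrand in the representation of the conditional expectation. Setting $K^n_t:=n\int_0^t(Y^n_s-S_s)^-ds$, comparison for BSDEs shows $Y^n\leq Y^{n+1}$. I would then derive the a priori estimate
\[
\sup_n \E\Big[\sup_{t\leq T}|Y^n_t|^2+\int_0^T\|U^n_s\|^2 ds+(K^n_T)^2\Big]<\infty
\]
by testing against a suitable supersolution (e.g.\ the solution of the non-reflected BSDE with data $(f,\xi)$, which dominates $Y^n$ from above after one reasons with $S\leq \xi$), together with It\^o's formula applied to $(Y^n)^2$. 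Monotone convergence then provides a limit $Y\in \cS^2$, weak compactness in $\chl$ extracts a limit $U$, and strong $L^2$-convergence of $Y^n-Y$ upgrades this to norm convergence of $U^n$; passing to the limit identifies $K$ as the limit of $K^n$.

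\textbf{Main obstacle.} The genuinely delicate point is to verify that the limiting $K$ belongs to $\cA^2$, i.e.\ that it is \emph{continuous}, and that the Skorokhod minimality condition $\int_0^T(Y_t-S_t)dK_t=0$ holds. Both rely crucially on the hypothesis in (A1)(iii) that the jumps of $S$ are inaccessible stopping times, equivalently $S^p_t=S_{t-}$: the jumps of $Y$ come from the integrals against the $H^{(i)}$ and are therefore totally inaccessible, the approximants $K^n$ are absolutely continuous, so any jump inherited by the limit $K$ would have to be predictable; the hypothesis on $S$ excludes exactly this possibility. The minimality identity then follows from the uniform bound on $(K^n_T)^2$, which forces $\E\int_0^T((Y^n_s-S_s)^-)^2 ds\to 0$, combined with a Helly-type passage to the limit in $\int_0^T(Y^n_t-S_t)dK^n_t=-\int_0^T((Y^n_t-S_t)^-)dK^n_t\leq 0$.
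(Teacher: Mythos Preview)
Your proposal is correct and follows the standard penalization route of El Karoui--Kapoudjian--Pardoux--Peng--Quenez adapted to the L\'evy/Teugels setting. Note, however, that the paper does not actually prove this theorem: it is stated without proof and attributed to Ren--El Otmani \cite{renotmani} (the $\qed$ immediately after the statement signals that no argument is supplied here). So there is nothing to compare against in the paper itself; your sketch is precisely the kind of argument one expects the cited reference to contain, and your identification of the role of Assumption (A1)(iii) in ensuring continuity of $K$ is on point.
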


To proceed we need to compare solutions of reflected BSDEs of types (\ref{finitude}). So let
us consider a stochastic process $V=(V_t)_{t\leq T}=(V^i)_{i\geq
1}=((V^i_t)_{t\leq T})_{i\geq 1}$ which belongs to $\cH^2(\ell^2)$ and let $M:=(M_t)_{t\le T}$ be
the stochastic integral defined by:$$\bal \forall t\leq
T,\,\,M_t:=\sum\limits^\infty_{i=1}\int^t_0 V^i_s dH^{(i)}_s.\ea$$ We
next denote by $\varepsilon(M):=(\varepsilon(M)_t)_{t\leq T}$ the process that satisfies: 
$$\forall t\leq T,\,\,\bal \varepsilon(M)_t=1+\int^t_0 \varepsilon(M)_{s-}dM_s.\ea$$By Doléans-Dade's formula we have (see e.g. \cite{philip}): $$\forall t\leq
T,\,\,\varepsilon(M)_t=\exp\Big\{M_t-\frac{1}{2}[M,M]^c_t-\sum\limits_{0\leq
s\leq t} \dt M_s\Big\}\prod_{0\leq s\leq t}\{1+\dt
M_s\}.$$

Let us now introduce the following assumption on the  process $V$.
\ms

\noindent {\bf Assumptions (A2)}: The process $V=(V^i)_{i\geq 1}=((V^i_t)_{t\leq T})_{i\geq 1}$
verifies\begin{equation}\label{hupov1}
\cP -a.s., \forall t\leq T, 
\sum\limits^\infty_{i=1} V^i_t p_i(\dt
L_t)>-1.\end{equation}
and there exists a constant $C$ such
that:
\begin{equation}\label{hupov2}
\sum\limits^\infty_{i=1}|V^i_t|^2\leq C,\,\,d\cP\otimes dt-a.e.
\end{equation}We then have:
\begin{propo}\label{prop31} Assume that Assumption (A2) is fulfilled. Then, $\cP$-a.s., for any $t\in [0,T]$,
$\varepsilon(M)_t>0$ and $\varepsilon(M)$ is a martingale of $\cS^2 $.
\end{propo}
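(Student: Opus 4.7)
The plan is to prove the three properties in sequence, leaning on the explicit Dol\'eans-Dade formula displayed above the statement and on the orthonormality $\langle H^{(i)},H^{(j)}\rangle_t=\delta_{ij}t$. For \emph{positivity}, the continuous exponential prefactor in that formula is strictly positive by construction, so everything reduces to the jump product $\prod_{0\le s\le t}(1+\dt M_s)$. Since $M$ has only countably many jumps on $[0,T]$ and each of them satisfies $\dt M_s=\sum_{i\ge 1}V^i_s p_i(\dt L_s)>-1$ by (\ref{hupov1}), every factor is strictly positive; convergence to a strictly positive limit follows from $\sum_{s\le T}(\dt M_s)^2\le [M,M]_T<\infty$ $\cP$-a.s., which is the standard criterion for an infinite product of positive terms to converge to a positive limit.

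Next, for the $\cS^2$-estimate, note that $\varepsilon(M)$ is a nonnegative local martingale solving $d\varepsilon(M)_t=\varepsilon(M)_{t-}\,dM_t$. I would pick a reducing sequence $(\t_n)$ making $\varepsilon(M)^{\t_n}$ a bounded martingale, apply It\^o's formula to its square, and take expectation. The stochastic integral term is a bounded true martingale and drops out, while the bracket term involves the predictable compensator of $[M,M]$, which by bilinearity and the Teugels orthonormality equals
\begin{equation*}
\langle M,M\rangle_t=\int_0^t \sum_{i\ge 1}|V^i_s|^2\,ds\le Ct
\end{equation*}
thanks to (\ref{hupov2}). This yields
\begin{equation*}
\E\bigl[\varepsilon(M)^2_{t\wedge \t_n}\bigr]\le 1+C\int_0^t \E\bigl[\varepsilon(M)^2_{s\wedge \t_n}\bigr]\,ds,
\end{equation*}
so Gronwall's lemma gives an $e^{Ct}$ bound uniform in $n$; Fatou then delivers $\E[\varepsilon(M)_t^2]\le e^{CT}$, and Doob's maximal inequality upgrades this to $\E[\sup_{t\le T}\varepsilon(M)_t^2]<\infty$, placing $\varepsilon(M)$ in $\cS^2$.

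The true martingale property then follows by dominated convergence: $(\varepsilon(M)_{t\wedge\t_n})_n$ is dominated by the integrable random variable $\sup_{s\le T}\varepsilon(M)_s$, so $\E[\varepsilon(M)_t]=\lim_n \E[\varepsilon(M)_{t\wedge \t_n}]=1$, and a nonnegative supermartingale with constant expectation is a martingale. The only genuinely delicate step is the compensation argument in the second paragraph, namely replacing $d[M,M]_s$ by $d\langle M,M\rangle_s$ under expectation and legitimately exchanging the $\ell^2$-sum $\sum_i(V^i_s)^2$ with the time integral: this is where the Teugels orthonormality and the uniform bound (\ref{hupov2}) must be combined carefully, with pre-localization by $\t_n$ keeping all quantities finite en route.
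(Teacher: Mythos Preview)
Your argument is correct and reaches the same conclusion, but the route to the $L^2$ bound is genuinely different from the paper's. You localize, apply It\^o to $\varepsilon(M)^2$, compensate $d[M,M]$ by $d\langle M,M\rangle=\sum_i|V^i_s|^2\,ds\le C\,ds$, and close with Gronwall and Fatou. The paper instead uses the multiplicative identity $\varepsilon(M)^2=\varepsilon(2M+[M,M])$, factors this as $\varepsilon(N)\exp\{\int_0^\cdot\sum_i|V^i_s|^2\,ds\}$ with $N$ a local martingale, and observes that $\varepsilon(N)\ge 0$ is a supermartingale, so $\E[\varepsilon(M)_t^2]\le e^{CT}\E[\varepsilon(N)_0]=e^{CT}$ directly, without Gronwall or an explicit localizing sequence. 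Your approach is more elementary and avoids the Dol\'eans--Dade square identity; the paper's is slicker but relies on that identity and on recognizing the supermartingale structure of $\varepsilon(N)$. One small remark on your positivity paragraph: the bare product $\prod_{s\le t}(1+\dt M_s)$ need not converge from $\sum(\dt M_s)^2<\infty$ alone; it is the compensated product $\prod(1+\dt M_s)e^{-\dt M_s}$ that converges absolutely under that hypothesis. This does not affect the conclusion, since positivity of $\varepsilon(M)$ follows from each factor $(1+\dt M_s)e^{-\dt M_s}$ being strictly positive, but the phrasing could be tightened.
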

\begin{proof}
First note that for any $t\leq T$, $$\Delta
M_t=\sum\limits^\infty_{i=1} V^i_t\Delta H^{(i)}_t=\sum\limits^\infty_{i=1}V^i_t p_i(\Delta L_t)>-1,$$
therefore for any $t\leq T$,
$\varepsilon (M_t)> 0$. Next by using Doléans-Dade's formula and since \\
$d\langle H^{(i)},H^{(j)}\rangle_s=\delta_{ij}ds$, we have: $\forall t\leq T$,
$$\begin{array}{ll}
\varepsilon (M)^2_t&=\varepsilon(2M+[M,M])_t\\
&=\varepsilon(2\sum\limits^\infty_{i=1}\int^._0 V^i_s
dH^{(i)}_s+\sum\limits^\infty_{i=1}\sum\limits^\infty_{j=1}\int^._0V^i_sV^j_sd[H^{(i)},H^{(j)}]_s)_t\\
&=\varepsilon(2\sum\limits^\infty_{i=1}\int^._0 V^i_s
dH^{(i)}_s+\sum\limits^\infty_{i=1}\int^._0|V^i_s|^2ds+\sum\limits^\infty_{i=1}\sum\limits^\infty_{j=1}\int^._0V^i_sV^j_sd([H^{(i)},H^{(j)}]_s-\langle
H^{(i)},H^{(j)}\rangle_s))_t\\
&=\varepsilon(N)_t
\mbox{exp}\{\sum\limits^\infty_{i=1}
\int^t_0|V^i_s|^2ds\}
\end{array}$$
where $$\bal N_t=2\sum\limits^\infty_{i=1}\int^t_0 V^i_s
dH^{(i)}_s+\sum\limits^\infty_{i=1}\sum\limits^\infty_{j=1}\int^t_0V^i_sV^j_sd([H^{(i)},H^{(j)}]_s-\langle
H^{(i)},H^{(j)}\rangle_s),\,t\leq T,\ea$$ is a local martingale. On the other
hand, the quantity $\sum\limits^\infty_{i=1}\int^T_0|V^i_s|^2ds$ is
bounded and $\varepsilon(N)\ge 0$, then 
$$
\E[(\varepsilon(M)_t)^2]\leq C\E[\varepsilon(N)_0]\leq C,\,\,\forall t\leq T,
$$since $\varepsilon(N)$ is a supermartingale. It follows that $\varepsilon(M)$ is not only a local martingale but also a martingale and then by
Doob's maximal inequality it belongs to $\cS^2$. \end{proof}
\begin{rem} The result of Proposition $3.1$ still holds true if instead of (\ref{hupov2}) we only have
\be \label{weakvi}\bal \sum\limits^\infty_{i=1}\int_0^T|V^i_s|^2ds\leq
C,\,\,\cP-a.s.\qed \ea\ee
\end{rem}

Next for two processes $U^i=(U^i_k)_{k\geq 1}$, $i=1,2$, of
$\cH^2(\ell^2)$ we define their scalar product in $\cH^2(\ell^2)$ which we denote by $\langle
U^1,U^2\rangle^p:=(\langle U^1,U^2\rangle^p_t)_{t\leq T}$ as:
$$
\forall t\leq T,\,\,\langle U^1,U^2\rangle^p_t=\sum_{k\geq
1}U_k^1(t)U_k^2(t).
$$
\begin{propo}:
Let $\xi\in{\cal L}^2$, $\varphi:=(\varphi_s)_{s\leq T}\in H^2$,
$\delta:=(\delta_s)_{s\leq T}$ a uniformly bounded process, and
finally let $V=(V^i)_{i\geq 1}\in \cH^2(\ell^2)$ satisfying (A2). Let
$(Y,U):=(Y_t,U_t)_{t\leq T}\in \cS^2\times\cH^2(\ell^2)$ be the
solution of the following BSDE: \be\label{edsr}\bal \forall t\leq
T,\,\,Y_t=\xi+\int^T_t(\varphi_s+\delta_sY_s+\langle
V,U\rangle^p_s)ds-\sum\limits^\infty_{i=1}\int^T_tU^i_sdH^{(i)}_s.\ea\ee
For $t\leq T$, let $(X^t_s)_{s\in [t,T]}$ be the process defined as
follows: \be\label{Xt}\forall s\in [t,T],\,\, X^t_s=e^{\int^s_t
\delta_rdr}\frac{\varepsilon(M)_s}{\varepsilon(M)_t}. \ee Then for any $t\leq T$, $Y_t$ satisfies:
$$\bal Y_t=\E[X^t_T\xi+\int^T_t X^t_s\varphi_s ds|\cF_t], \,\, \cP-a.s..\ea$$ On the other hand, if $(Y',U')\in
\cS^2\times\cH^2(\ell^2)$ is the solution of the BSDE: \be\bal 
Y'_t=\xi+\int^T_tf(s,Y'_s,U'_s)ds-\sum\limits^\infty_{i=1}\int^T_tU'^i_sdH^{(i)}_s,
\forall t\leq T\ea \ee where $$f(t,Y'_t,U'_t)\geq \varphi_t+\delta_t
Y'_t+\langle V, U'\rangle^p_t,\,d\cP\otimes dt-a.s.$$ then for any
$t\leq T$, $$\bal Y'_t\geq \E[X^t_T\xi+\int^T_t X^t_s\varphi_s
ds|\cF_t],\,\cP-a.s..\ea$$
\end{propo}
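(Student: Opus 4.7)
My plan is to compute $d(X^t_s Y_s)$ via the It\^o product rule and show that all ``non-linear'' drift terms cancel, leaving a pure $-X^t_s\varphi_s\,ds$ drift plus a local martingale, after which a conditional-expectation step produces the representation. First, the product structure $X^t_s = e^{\int_t^s \delta_r dr}\,\varepsilon(M)_s/\varepsilon(M)_t$, as a continuous finite-variation factor times the jump exponential $\varepsilon(M)$, yields the linear SDE
$$dX^t_s = X^t_s\,\delta_s\,ds + X^t_{s-}\,dM_s,\qquad X^t_t = 1,$$
on $[t,T]$; by Proposition \ref{prop31} and the boundedness of $\delta$, $X^t$ is strictly positive and belongs to $\cS^2$.

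Next I would apply the It\^o product rule on $[t,T]$, namely $d(X^t_s Y_s) = X^t_{s-}\,dY_s + Y_{s-}\,dX^t_s + d[X^t,Y]_s$. The covariation reduces to $d[X^t,Y]_s = X^t_{s-}\,d[M,\sum_i\!\int U^i dH^{(i)}]_s$, whose predictable compensator equals $X^t_{s-}\langle V,U\rangle^p_s\,ds$, thanks to the orthogonality $\langle H^{(i)},H^{(j)}\rangle_s = \delta_{ij}\,s$ and the very definition of $\langle V,U\rangle^p$. Collecting the $ds$-terms, the pair $-X^t_{s-}\delta_s Y_s$ and $X^t_s\delta_s Y_{s-}$ cancel (left- and right-limits coincide $ds$-a.e.), and the two $\langle V,U\rangle^p_s$ contributions cancel by construction. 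What remains is
$$d(X^t_s Y_s) = -X^t_s\,\varphi_s\,ds + dN_s,$$
for a local martingale $N$ on $[t,T]$.

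To pass to conditional expectations I need $N$ to be a true martingale on $[t,T]$. Its integrands pair $X^t_{s-}$ (bounded in $L^2$ uniformly in $s$, since $e^{\int\delta}$ is bounded and $\varepsilon(M)\in\cS^2$) with $Y_{s-}\in\cS^2$, $U\in\cH^2(\ell^2)$ and $V$ satisfying Assumption (A2); a standard Cauchy--Schwarz / BDG estimate then bounds $\E[\sup_{s\in[t,T]}|N_s|]$ and secures the martingale property. Integrating on $[t,T]$ with $X^t_t=1$ and $Y_T=\xi$, then taking $\E[\cdot\,|\,\cF_t]$, gives
$$Y_t = \E\Big[X^t_T\,\xi + \int_t^T X^t_s\,\varphi_s\,ds\,\Big|\,\cF_t\Big].$$

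For the inequality on $Y'$, I would set $\psi_s := f(s,Y'_s,U'_s) - \delta_s Y'_s - \langle V,U'\rangle^p_s$: by the Lipschitz property of $f$ together with $(Y',U')\in\cS^2\times\cH^2(\ell^2)$, $\delta$ bounded and $V$ satisfying (A2), $\psi\in H^2$, and by hypothesis $\psi_s\ge\varphi_s$ $d\cP\otimes ds$-a.e.~Rewriting $f(s,Y'_s,U'_s)=\psi_s+\delta_s Y'_s+\langle V,U'\rangle^p_s$ in the BSDE for $Y'$ shows that $(Y',U')$ solves exactly (\ref{edsr}) with $\psi$ replacing $\varphi$; the first part then yields $Y'_t = \E[X^t_T\xi + \int_t^T X^t_s\psi_s\,ds\,|\,\cF_t]$. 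Combined with $X^t_s>0$ from Proposition \ref{prop31} and $\psi\ge\varphi$, this delivers the required lower bound. The main obstacle in the program is the true-martingale upgrade of $N$; this is where Proposition \ref{prop31} (and through it Assumption (A2)) is used beyond merely ensuring positivity of the stochastic exponential.
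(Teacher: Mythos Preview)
Your argument for the first part is essentially the paper's own proof: compute $-d(Y_s X^t_s)$ by It\^o, split the quadratic covariation $[M,\sum_i\int U^i\,dH^{(i)}]$ into its compensator $\langle V,U\rangle^p_s\,ds$ (which cancels the matching term in the drift of $Y$) plus a local martingale, upgrade the local martingale to a true one using $X^t\in\cS^2$, $Y\in\cS^2$, $U\in\cH^2(\ell^2)$ and (A2), and take conditional expectation.

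For the second part you take a genuinely different route. The paper simply repeats the It\^o computation on $Y'X^t$, inserts the pointwise inequality $f(s,Y'_s,U'_s)\ge \varphi_s+\delta_sY'_s+\langle V,U'\rangle^p_s$ at the $ds$-level (legitimate since $X^t\ge 0$), and obtains directly $-d(Y'_sX^t_s)\ge X^t_s\varphi_s\,ds - dN'_s$ with $N'$ a uniformly integrable martingale. Your approach---define $\psi_s:=f(s,Y'_s,U'_s)-\delta_sY'_s-\langle V,U'\rangle^p_s\ge\varphi_s$, observe that $(Y',U')$ then solves exactly the linear BSDE (\ref{edsr}) with $\psi$ in place of $\varphi$, invoke the first part to get the \emph{equality} $Y'_t=\E[X^t_T\xi+\int_t^T X^t_s\psi_s\,ds\mid\cF_t]$, and finally compare $\psi\ge\varphi$ under the strictly positive weight $X^t$---is a clean reduction that avoids redoing the computation. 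It does require $\psi\in H^2$ so that the first part applies as stated; you correctly derive this from the Lipschitz property of $f$, which is implicitly assumed via Theorem~3.1/(A1) (the paper invokes this at the outset to guarantee existence of $(Y',U')$). The paper's direct route would in principle still work if one only assumed the solution $(Y',U')$ exists and the inequality on $f$ holds, without $\psi\in H^2$; but that extra generality is not used anywhere.
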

\begin{proof} First note that the processes $(Y,U)$ and $(Y',U')$ exist thanks to Theorem $3.1$. Let us now fix $t\in [0,T]$. Since $V$ satisfies (A2) then $\varepsilon(M)>0$ which implies that $(X^t_s)_{s\in [t,T]}$ is defined 
$\omega$ by $\omega$. On the other hand it satisfies
$$
\forall s\in [t,T], \,dX^t_s=X^t_{s-}(\delta_sds+dM_s)
$$
and since $\delta$ is uniformly bounded then as in Proposition
$3.1$, one can show that $\E[\sup_{s\in [t,T]}|X^t_s|^2]<\infty.$
Now by It\^o's formula, for any $s\in [t,T]$, we have
\begin{align*}
-d(Y_s X^t_s)=&-Y_{s-}dX^t_s-X^t_{s-}dY_s-d[Y,X^t]_s\\
=&-X^t_{s-}Y_{s-}\delta_sds-Y_{s-}X^t_{s-}dM_s+X^t_{s-}\varphi_sds+X^t_{s-}\delta_sY_sds\\
&\qq \qq -X^t_{s-}\{\sum_{i\geq 1}
U^i_sdH^{(i)}\}-X^t_{s-}\{\sum\limits^\infty_{i=1}\sum\limits^\infty_{j=1}V^i_sU^j_sd([H^{(i)},H^{(j)}]_s-\langle H^{(i)},H^{(j)}\rangle_s)\}\\
=&X^t_s\varphi_sds-dN_s
\end{align*}
where for any $s\in [t,T]$
$$dN_s=Y_{s-}X^t_{s-}\{\sum\limits^\infty_{i=1}V^i_sdH^{(i)}_s\}+X^t_{s-}\{\sum_{i\geq
1}
U^i_sdH^{(i)}_s\}+X^t_{s-}\{\sum\limits^\infty_{i=1}\sum\limits^\infty_{j=1}V^i_sU^j_sd([H^{(i)},H^{(j)}]_s-\langle
H^{(i)},H^{(j)}\rangle_s\}.$$ Note that since $X^t$ is uniformly
square integrable, $Y\in \cS^2$, $U\in \cH^2(\ell^2)$ and finally
taking into account Assumption (A2) on $V$, we get that $N$ is a
uniformly integrable martingale on $[t,T]$. Therefore taking
conditional expectation to obtain:$$\brl Y_t=\E[X^t_T\xi+\int^T_t
X^t_s\varphi_s ds|\cF_t], \,\cP-a.s.\ea$$ which is the desired result.

We now focus on the second part of the claim. By It\^o's formula we have: $\forall s\in [t,T]$,
\begin{align*}
-d(Y'_s X^t_s)=&-Y'_{s-}dX^t_s-X^t_{s-}dY'_s-d[Y',X^t]_s\\
=&-X^t_{s-}Y'_{s-}\delta_sds-Y'_{s-}X^t_{s-}\{\sum\limits^\infty_{i=1}V^i_sdH^{(i)}_s\}+X^t_{s-}f(s,Y'_s,U'_s)ds-X^t_{s-}\{\sum\limits^\infty_{i=1}U'^i_sdH^{(i)}_s\}\\
&\qq \qq -X^t_{s-}\{\sum\limits^\infty_{i=1}\sum\limits^\infty_{j=1}V^i_sU'^j_sd[H^{(i)},H^{(j)}]_s\}.
\end{align*}
Next since $X^t\geq 0$ and taking into account the inequality which $f$ verifies to obtain $$-d(Y'_sX^t_s)\geq
X^t_s\varphi_sds-dN'_s~~\cP-a.s.,$$where for any $s\in [t,T]$,
$$dN'_s=Y'_{s-}X^t_{s-}\{\sum\limits^\infty_{i=1}V^i_sdH^{(i)}_s\}-X^t_{s-}\{\sum_{i=1}^{\infty}
U'^i_sdH^{(i)}_s\}-X^t_{s-}\{\sum\limits^\infty_{i=1}\sum\limits^\infty_{j=1}V^i_sU'^j_sd([H^{(i)},H^{(j)}]_s-\langle
H^{(i)},H^{(j)}\rangle_s)\}.$$ But once more $N'$ is a
uniformly integrable martingale then by  taking the conditional expectation we
obtain: $$\brl Y'_t\geq \E[X^t_T\xi+\int^T_t X^t_s\varphi_s
ds|\cF_t],\,\,\cP-a.s.\ea
$$which completes the proof.
\end{proof}
We are now ready to give a comparison result of solutions of two
BSDEs of type (\ref{finitude}).
\begin{propo}\label{comp}
For $i=1,2$, let $(f_i,\xi_i)$ be a pair that satisfies Assumption
(A1)-(i),(ii) and let $(Y^i,U^i)\in \cS^2\times\cH^2(\ell^2)$ be the
solution of the following BSDE: $\forall t\leq
T$, $$\brl
Y^i_t=\xi_i+\int^T_tf_i(s,Y^i_s,U^i_s)ds
-\sum\limits^\infty_{j=1}\int^T_tU^{i,j}_s
dH^{(j)}_s.\ea$$ Assume that:

(i) For any $U^1,U^2\in \cH^2(l^2)$,  there exists a process
$V^{U^1,U^2}=(V_j^{U^1,U^2})_{j\geq 1}$ (which may depend on $U^1$ and $U^2$)
satisfying (A2) such that $f_1$ verifies: \be \label{ineq}
f_1(t,Y_t^2,U_t^1)-f_1(t,Y_t^2,U_t^2)\geq \langle
V^{U^1,U^2},(U^1-U^2)\rangle^p_t,\,\, d\cP\otimes dt-a.e. ;\ee

(ii) $\cP-a.s.,\,\,\xi_1\geq \xi_2$ and
\be \label{inegf}f_1(t,Y^2_t,U^2_t)\geq f_2(t,Y^2_t,U^2_t), d\cP\otimes dt-a.e..\ee Then $\cP$-a.s., $Y^1_t\geq Y^2_t,\,\,\forall
t\in[0,T].$
\end{propo}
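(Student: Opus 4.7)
The plan is to reduce the comparison assertion to the second half of Proposition $3.2$ by the standard linearisation of the BSDE satisfied by the difference. Set $\bar Y:=Y^1-Y^2$, $\bar U:=U^1-U^2$ and $\bar\xi:=\xi_1-\xi_2$. Subtracting the two BSDEs gives, for every $t\leq T$,
$$
\bar Y_t=\bar\xi+\int_t^T\Delta_s\,ds-\sum_{j\geq 1}\int_t^T\bar U^j_s\,dH^{(j)}_s,
$$
where $\Delta_s:=f_1(s,Y^1_s,U^1_s)-f_2(s,Y^2_s,U^2_s)$. I would then split $\Delta_s$ into three pieces: the increment of $f_1$ in the $y$-variable, its increment in the $\z$-variable, and the driver difference $f_1-f_2$ at $(Y^2,U^2)$.

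For the first piece, define
$$
\delta_s:=\frac{f_1(s,Y^1_s,U^1_s)-f_1(s,Y^2_s,U^1_s)}{Y^1_s-Y^2_s}\mathbbm{1}_{\{Y^1_s\neq Y^2_s\}},
$$
which is $\cF_s$-progressively measurable and, by the Lipschitz hypothesis on $f_1$ in Assumption (A1)(ii), uniformly bounded by $\kappa$. For the second piece, hypothesis (i) supplies a process $V:=V^{U^1,U^2}$ satisfying Assumption (A2) with
$$
f_1(s,Y^2_s,U^1_s)-f_1(s,Y^2_s,U^2_s)\geq \langle V,\bar U\rangle^p_s,\qquad d\cP\otimes ds\text{-a.e.}
$$
For the third piece, hypothesis (ii) gives $\varphi_s:=f_1(s,Y^2_s,U^2_s)-f_2(s,Y^2_s,U^2_s)\geq 0$. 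Putting the three together,
$$
\Delta_s\geq \delta_s\bar Y_s+\langle V,\bar U\rangle^p_s+\varphi_s,\qquad d\cP\otimes ds\text{-a.e.},
$$
so that $(\bar Y,\bar U)$ solves a BSDE whose driver dominates $\varphi_s+\delta_s\bar Y_s+\langle V,\bar U\rangle^p_s$ with $\varphi\in H^2$ non-negative, $\delta$ uniformly bounded and $V$ satisfying (A2).

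At this point the second part of Proposition $3.2$ applies directly with data $(\bar\xi,\varphi,\delta,V)$: denoting by $(X^t_s)_{s\in[t,T]}$ the process defined in $(\ref{Xt})$, we obtain
$$
\bar Y_t\geq \E\Big[X^t_T\,\bar\xi+\int_t^T X^t_s\varphi_s\,ds\,\Big|\,\cF_t\Big],\qquad \cP\text{-a.s.}
$$
By Proposition $3.1$ applied to $M=\sum_i\int_0^\cdot V^i_sdH^{(i)}_s$, the Doléans exponential $\varepsilon(M)$ is a strictly positive martingale, hence $X^t_s>0$; combined with $\bar\xi\geq 0$ and $\varphi_s\geq 0$ this yields $\bar Y_t\geq 0$, i.e.\ $Y^1_t\geq Y^2_t$ on a $\cP$-null complement, and the RCLL property of both processes upgrades this to the statement for every $t\in[0,T]$ simultaneously.

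The main subtlety, compared with the Brownian setting, is the middle piece: the Lipschitz property of $f_1$ in $\z$ alone does not produce a process $V$ for which $\varepsilon(M)$ is a positive martingale, because the jumps of $H^{(i)}$ are not bounded and one must avoid $1+\Delta M<0$. This is exactly why hypothesis (i) is built into the statement, and why Proposition $3.1$ (via Assumption (A2)) is the decisive ingredient; once it is in place the rest is a routine linearisation.
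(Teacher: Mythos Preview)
Your proof is correct and follows essentially the same approach as the paper: linearise the difference $\bar Y=Y^1-Y^2$ by splitting the driver increment into the $y$-increment (yielding the bounded process $\delta$), the $\zeta$-increment (handled via hypothesis (i) and the process $V$), and the non-negative remainder $\varphi$, then invoke the second half of Proposition~3.2 and the positivity of $X^t$ from Proposition~3.1. Your closing remark on why Assumption~(A2) is indispensable in the L\'evy setting is a welcome addition but does not alter the argument.
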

\begin{proof} Let us set
$\bar{Y}=Y^1-Y^2$, $\bar{U}=U^1-U^2$ and $\bar \xi=\xi^1-\xi^2$,
then $\forall
t\in[0,T]$,
$$\bal\bar{Y}_t=\bar{\xi}+\int^T_t\{
f_1(s,Y^1_s,U^1_s)-f_2(s,Y^2_s,U^2_s)\}ds-\sum\limits^\infty_{j=1}\int^T_t\bar U^j_sdH^{(j)}_s.\ea$$\\
Next let us set
\be \label{difdelta}\forall s\leq T,\,\,\delta_s=(f_1(s,Y^1_s,U^1_s)-f_1(s,Y^2_s,U^1_s))\times (\bar{Y}_s)^{-1}{\mathbbm{1}}_{\{\bar{Y}_s\neq 0\}} \mbox{ and }
\varphi_s=f_1(s,Y^2_s,U^2_s)-f_2(s,Y^2_s,U^2_s).\ee Then by (\ref{inegf}) we have, $\varphi_s\geq 0,~d\cP\otimes dt-a.e.$. On the other hand $(\delta_s)_{s\in [0,T]}$ is bounded since $f_1$ is uniformly
Lipschitz. Finally we have
$$f_1(s,Y^1_s,U^1_s)-f_2(s,Y^2_s,U^2_s)\geq
\varphi_s+\delta_s\bar{Y}_s+\langle V^{U^1,U^2},\bar U
\rangle^p_s,~d\cP\otimes ds-a.e..$$ Therefore thanks to Proposition
$3.2$ we get,
$$\forall t\leq T,\,\,\bar{Y}_t\geq
\E[X^t_T\bar{\xi}+\int^T_tX^t_s\varphi_sds|F_s]\geq 0,\,\,\cP-a.s.$$
where $(X^t_s)_{s\in [t,T]}$ is defined in the same way as in
(\ref{Xt}) with the new processes $\delta$ and $\varphi$ defined in (\ref{difdelta}). As $X^t$, $\bar \xi$
and $\varphi$ are non-negative then for any $t\leq T$, $\bar Y_t\geq
0$ which implies that $\cP-a.s., \forall t\leq T, Y^1_t\geq Y^2_t$
since $Y^1$ and $Y^2$ are RCLL. The proof of the claim is now complete. 
\end{proof}
\begin{rem} Conditions (\ref{ineq}) and
(\ref{inegf}) can be replaced respectively with
\be \label{ineqy2}
f_2(t,Y_t^2,U_t^1)-f_2(t,Y_t^2,U_t^2)\geq \langle
V^{U^1,U^2},(U^1-U^2)\rangle^p_t,\,\, d\cP\otimes dt-a.e.\ee and
\be \label{inegfy2}f_1(t,Y^1_t,U^1_t)\geq f_2(t,Y^1_t,U^1_t), d\cP\otimes dt-a.e.. \ee In this case, with the other properties,
one can show that we have $\cP$-a.s., $Y^1\geq Y^2$. \qed
\end{rem}
\begin{rem} Point (i) of Proposition \ref{comp} is satisfied in the following cases:
\ms

\noindent (i) $f$ does not depend on the component $\z$ ;

\noindent (ii) If $L$ reduces to a Poisson process, we have $H^{(i)}\equiv 0$ for
all $i\geq 2$, then Assumption (A2) reads: (a) $V=(V_t)_{t\in [0,T]}$ is bounded ; (b) for any stopping time $\t$, such
that $\triangle L_\tau\neq 0$, $ V_\tau >-1,\,\,\cP-a.s.$.

\noindent (iii) The generator $f$ satisfies $$f(t,y,\z)=h_1(t,y,\sum_{i\geq
1}\theta^i_t\z^i), \,\forall (t,y,\z)\in \esp\times  \ell^2$$ where
the mapping $\eta \in \R\mapsto h_1(t,y,\eta)$ is non decreasing and uniformly Lipschitz and $((\theta^i_t)_{i\geq 1})_{t\leq T}$ satisfies$$\sum_{i\geq 1}|\theta_t^i|^2\leq C,\,\, dt \otimes d\cP -a.e. \mbox{ and }\cP-a.s.,\forall t\leq T, \, \sum_{i\geq 1}\theta^i_t p_i(\Delta
L_t)\geq 0.\qed$$
\end{rem}
We finally provide a comparison result of solutions of reflected
BSDEs of type (\ref{finitude}) which will be useful in the sequel.
\begin{propo}\label{comparison}For $i=1,2$, let $(f_i,\xi_i,S^i)$ be a triple which satisfies Assumption (A1) and let $(Y^i_t,K^i_t,U^i_t)_{t\le T}$ be the  solution of the RBSDE associated with $(f_i,\xi_i,S^i)$.   Assume that:

(i) $\cP-a.s$, $\xi_1\geq\xi_2$ and $\forall
t\in[0,T],~~f_1(t,y,\z)\geq f_2(t,y,\z)$ and $S^1_t\geq S^2_t$ ;

(ii) $f_1$ verifies  condition (\ref{ineq}).

\noindent Then $\cP$-a.s. for any $t\leq T$, $Y^1_t\geq Y^2_t. $
\end{propo}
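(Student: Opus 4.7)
The plan is to reduce the reflected-BSDE comparison to the non-reflected comparison of Proposition \ref{comp}, via penalization: replacing the reflecting processes $K^i$ by absolutely continuous drifts eliminates the sign-indefinite $d\bar K = dK^1 - dK^2$ contribution that would otherwise obstruct a direct application of Proposition \ref{comp}.

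Concretely, for each $n\ge 1$ and $i\in\{1,2\}$ I would introduce the penalized BSDE
\[
Y^{i,n}_t = \xi_i + \int_t^T f^n_i(s, Y^{i,n}_s, U^{i,n}_s)\,ds - \sum_{j\geq 1}\int_t^T U^{i,n,j}_s\,dH^{(j)}_s, \qquad t\le T,
\]
with Lipschitz generator $f^n_i(s,y,\z) := f_i(s,y,\z) + n(y - S^i_s)^-$. Existence and uniqueness of $(Y^{i,n}, U^{i,n}) \in \cS^2 \times \cH^2(\ell^2)$ is the standard non-reflected BSDE theory in this L\'evy setting, and from the penalization construction of reflected BSDEs carried out in \cite{renotmani} one has $Y^{i,n}\nearrow Y^i$ as $n\to\infty$.

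I would then compare $Y^{1,n}$ and $Y^{2,n}$ via Proposition \ref{comp}. Condition (\ref{ineq}) is inherited by $f^n_1$ from $f_1$ with the very same process $V^{U^1,U^2}$, since the penalization is independent of $\z$. The pointwise ordering $f^n_1 \ge f^n_2$ holds because
\[
f^n_1(s,y,\z) - f^n_2(s,y,\z) = \big[f_1(s,y,\z) - f_2(s,y,\z)\big] + n\big[(y-S^1_s)^- - (y-S^2_s)^-\big] \ge 0,
\]
since $f_1\geq f_2$ by assumption and the map $s\mapsto (y-s)^- = (s-y)^+$ is nondecreasing in $s$, whence $S^1_s\ge S^2_s$ gives $(y-S^1_s)^-\ge (y-S^2_s)^-$. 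Together with $\xi_1\geq \xi_2$, Proposition \ref{comp} yields $Y^{1,n}_t\ge Y^{2,n}_t$ $\cP$-a.s. for every $t\le T$ and every $n$. Passing to the limit $n\to\infty$ and exploiting the RCLL property of $Y^1,Y^2$ concludes that $\cP$-a.s., for all $t\le T$, $Y^1_t\ge Y^2_t$.

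The only delicate step is the penalization convergence $Y^{i,n}\nearrow Y^i$ in the L\'evy setting, which I would borrow directly from \cite{renotmani}. Should one prefer to avoid this citation, an alternative is a direct It\^o computation on $(\bar Y^-_s)^2 X^t_s$, with $X^t$ as in Proposition \ref{prop31} and $\bar Y = Y^1 - Y^2$, exploiting that on $\{\bar Y_s<0\}$ one has $Y^2_s > Y^1_s \ge S^1_s \ge S^2_s$, so the Skorokhod minimality of $K^2$ forces $dK^2_s=0$ there; a Gronwall estimate then closes. This alternative is technically heavier because of the jump contributions to the quadratic (co)variations, which is why the penalization route is preferable.
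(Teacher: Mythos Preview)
Your proof is correct and follows essentially the same penalization route as the paper: introduce $f^n_i(s,y,\z)=f_i(s,y,\z)+n(y-S^i_s)^-$, apply the non-reflected comparison of Proposition~\ref{comp} (noting that the $\z$-independent penalty preserves condition~(\ref{ineq}) and that $S^1\ge S^2$ gives $f^n_1\ge f^n_2$), and pass to the limit using the penalization convergence $Y^{i,n}\to Y^i$. The only cosmetic difference is that the paper cites \cite{saidouknine} for the convergence step (emphasizing that $S^i$ has no predictable jumps), whereas you cite \cite{renotmani}; both are adequate here.
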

\begin{proof}
For $i=1,2$, let us consider the following sequence of processes
$(Y^{i,n},U^{i,n})\in \espyz$, $n\ge 0$, that satisfy:
$$\begin{array}{l}Y^{i,n}_t=\xi_i+\int^T_t
f_i(s,Y^{i,n}_s,U^{i,n}_s)ds+n\int^T_t(Y^{i,n}_s-S^i_s)^-ds-\sum\limits^\infty_{j=1}\int^T_tU^{i,n,j}_sdH^{(j)}_s,\,\,\forall
t\le T\ea$$ and let us denote by
$$f^n_i(s,y,\z):=f_i(s,y,\z)+n(y-S^i_s)^-.$$ For any $n\ge
0$, $f^n_1$ satisfies (\ref{ineq}) and $f_1^n\ge f_2^n$. Therefore
using the comparison result of Proposition \ref{comp}, we deduce
that: $\forall n\ge 0$, \be \label{ineqyin}\cP-a.s., \forall t\leq
T, Y^{1,n}_t\geq Y^{2,n}_t.\ee But since $f_1$ verifies (\ref{ineq})
then we can show that for $i=1,2$, $Y^{i,n}\nearrow Y^i$ in $\cS^2$ since the processes $S^i$ do not have predictable jumps (see e.g. \cite{saidouknine}, Theorem 1.2.a, pp. 5). Thus,  inequality
(\ref{ineqyin}) implies that $\cP$-a.s., $Y^1\geq Y^2$.
\end{proof}

We are now going to make a connection between reflected BSDEs and
their associated IPDEs with obstacle. So let $(t,x)\in \esp$ and let $(X^{t,x}_s)_{s\leq T}$ be the solution of the following standard SDE driven by the Lévy process $L$, i.e., \be
\label{SDE}\bal X^{t,x}_s=x+\int^{t\vee s}_t
b(r,X^{t,x}_r)dr+\int^{t\vee s}_t\sigma(r,X^{t,x}_{r-} )dL_r, \q \forall
 s\leq T, \ea \ee
 where we
assume that the functions $b$ and $\sigma$ are jointly continuous,
 Lipschitz continuous $\wr$ $x$ uniformly in
$t$, i.e., there exists a constant $C\geq 0$ such that for any $t\in
[0,T]$, $x$,$x^\prime\in \R$,\be\label{eqlipb}
|\sigma(t,x)-\sigma(t,x^\prime)|+|b(t,x)-b(t,x^\prime)|\leq
C|x-x^\prime|.\ee As a consequence, the functions $b(t,x)$ and $\sigma(t,x)$ are of linear growth. We additionally assume that $\sigma$ is bounded, i.e., there exists a constant $C_\sigma$ such that \be\label{sigma}\forall (t,x)\in [0,T]\times \R, |\sigma(t,x)|\leq C_\sigma.\ee
Under the above conditions on $b$ and $\sigma$, the
process $X^{t,x}$ exists and is unique (see e.g. \cite{philip}, pp.249),
and satisfies:\be\label{sde1}\forall p\geq 1,\,\E[\sup\limits_{s\leq
 T}|X^{t,x}_s|^p]\leq C(1+|x|^p).\ee
Next let us consider the following functions: $$\begin{array}{l}
h:\,x\in \R\mapsto
h(x)\in \R;\\f:\,(t,x,y,\z)\in [0,T]\times \R^{1+1}\times \ell^2\mapsto
f(t,x,y,\z)\in \R;\\\Psi:\,(t,x)\in [0,T]\times \R\mapsto \Psi(t,x)\in \R,\end{array}$$
which we assume satisfying:
\ms

\noindent {\bf Assumptions (A3):}\\
(i) $h$, $\Psi$ and $f(t,x,0,0)$ are jointly continuous and belong to $\Pi_g$ ; \\
(ii) the mapping $(y,\z)\mapsto f(t,x,y,\z)$ is Lipschitz continuous uniformly in $(t,x)$ ; \\
(iii) For any $x\in \R$, $h(x)\geq \Psi(T,x)$ ; \\
(iv)  The generator $f$ has the following form: $$f(t,x,y,\z)=\underbar h(t,x,y,\sum_{i\geq
1}\theta^i_t\z^i), \,\forall (t,x,y,\z)\in [0,T]\times \R^{1+1}\times
\ell^2$$ where the mapping $\eta \in \R\longmapsto \underbar h(t,x,y,\eta)$ is
non decreasing, and there exists a constant $C>0$, such that
$\forall t\in [0,T]$, $x,y, \eta ,\eta'\in \R$,
$$|\underbar h(t,x,y,\eta)-\underbar h(t,x,y,\eta')\leq C|\eta-\eta'|.$$Moreover $(\theta^i_t)_{i\geq 1}$ satisfies $$\sum_{i\geq 1}|\theta_t^i|^2\leq C,\,\, dt \otimes d\cP -a.e. \mbox{ and }\cP-a.s.,\forall t\leq T, \, \sum_{i\geq 1}\theta^i_t p_i(\Delta
L_t)\geq 0.\qed$$

Next let $(t,x)\in \esp$ be fixed and let us consider the following
reflected BSDE:  \be \label{RBSDE}\left\{
    \begin{array}{ll}

(Y^{t,x},U^{t,x},K^{t,x})\in \cS^2\times \chl\times \cA^2;\\
Y^{t,x}_s=h(X^{t,x}_T)+\int^T_s f(r,X^{t,x}_r,Y^{t,x}_r,U^{t,x}_r)dr+K^{t,x}_T-K^{t,x}_s-\sum\limits^\infty_{i=1}\int^T_s U^{t,x,i}_r dH^{(i)}_r\\\forall s\leq T,\,\,
     Y^{t,x}_s\geq \Psi(s,X^{t,x}_s)\mbox{ and } \int^T_0(Y^{t,x}_s-\Psi(s,X^{t,x}_s))
     dK^{t,x}_s=0,\,\cP-a.s.
    \end{array}
    \right.\ee
Under assumptions (A3)-(i),(ii),(iii), the reflected BSDE
(\ref{RBSDE}) is well-posed and, thanks to Theorem 3.1, has a unique solution $(Y^{t,x},U^{t,x},K^{t,x})$. Moreover the
following estimate holds true: \be
\label{estimateyku}\begin{array}{c}\E\Big[\sup\limits_{0\leq s\leq
T}|Y^{t,x}_s|^2+\int_0^T\{\sum_{i\geq 1}|U^{t,x,i}_s|^2\} ds\Big]\leq
C\E\Big[|h(X^{t,x}_T)|^2+\int^T_0|f(s,X^{t,x}_s,0,0)|^2ds+\sup\limits_{0\leq
s\leq T}|\Psi(s,X^{t,x}_s)|^2\Big ].\ea\ee On the other hand, the quantity
\be\label{UY}u(t,x)=Y^{t,x}_t, \ee is deterministic, continuous and satisfies  $$\forall (t,x)\in \esp, \forall s\in [t,T], \,\,Y^{t,x}_s:=u(s,X^{t,x}_s).$$
Fore more details, one can see e.g. (\cite{renotmani}, pp.1265). Finally note that
under Assumptions (A3) and by (\ref{estimateyku}) the function $u$ belongs also to $\Pi_g$. \ms

We now introduce the following IPDE with obstacle:

\be \label{IPDE}\left\{
    \begin{array}{ll}
     \min\Big \{u(t,x)-\Psi (t,x);-\partial_t u(t,x)-{\cal L}u(t,x)-f(t,x,u(t,x),\Phi (u)(t,x))\Big \}=0,\,(t,x)\in [0,T)\times \R,\\
     u(T,x)=h(x),
    \end{array}
    \right.\ee
where ${\cal L}$ is the generator associated with the process $X^{t,x}$ of (\ref{SDE}) which has the following expression: \be \lb{gener}\begin{array}{l} {\cal
L}u(t,x)=(\E[L_1]\sigma(t,x)+b(t,x))\partial_x
u(t,x)+\frac{1}{2}\sigma(t,x)^2\varpi^2\partial^2_{xx}
u(t,x)\\\qq\qq\qq\qq
+\int_\R[u(t,x+\sigma(t,x)y)-u(t,x)-\partial_xu(t,x)\sigma(t,x)y]\Pi(dy)
\end{array} \ee and
$$\bal \Phi (u)(t,x)=\Big(\frac{1}{c_{1,1}}
\partial_x u(t,x)\sigma(t,x)\mathbbm{1}_{k=1}+\int_\R(u(t,x+\sigma(t,x)y)-u(t,x)-\partial_x u(t,x)y)p_k(y)\Pi(dy)\Big)_{k\ge 1}\ea$$where $c_{1,1}$ is defined in (\ref{formc11}).

We are going to  consider solutions of $(\ref{IPDE})$ in viscosity
sense whose definition is as follows:
\begin{defi}\label{defviscosimple}A continuous function $u:[0,T]\times \R\rightarrow \R$ is said to be a viscosity subsolution (resp. supersolution) of $(\ref{IPDE})$ if:
\ms

\noindent (i) $u(T,x)\leq  h(x)$ (resp.  $u(T,x)\geq  h(x)$) ; 

\noindent (ii) for any $(t,x)\in (0,T)\times \R$ and  for any $\varphi\in \ccp$ such that $\varphi(t,x)=u(t,x)$ and $\varphi-u$ attains its global minimum (resp. maximum) at (t,x),
$$\min\Big \{u(t,x)-\Psi(t,x);-\partial_t \varphi(t,x)-{\cal L}\varphi(t,x)-f(t,x,\varphi(t,x),\Phi (\varphi)(t,x))\Big \}\leq 0\,\, (resp. \,\geq 0).$$
The function $u$ is said to be a viscosity solution of
$(\ref{IPDE})$ if it is both its viscosity subsolution and supersolution.\qed\end{defi} In \cite{renotmani}, Ren-El Otmani (Theorem 5.8, pp.1265) have shown that under Assumption (A3), the function $u$ defined in (\ref{UY}) is a viscosity solution for (\ref{IPDE}).
\subsection{Systems of reflected BSDEs with inter-connected obstacles driven by a L\'evy process and multi-modes switching problem.}
We now introduce the following functions $f_i$, $h_i$ and
$g_{ij}$, $i,j\in A$:
\be \lb{introfct}\begin{array}{ll}
&f_i\,\,: (t,x,(y^i)_{i=1,m},\z)\in \esp \times \R^{m}\times \ell^2\longmapsto f_i(t,x,(y^i)_{i=1,m},\z)\in \R\,,\\
&g_{ij}\,: (t,x)\in \esp\longmapsto g_{ij}(t,x)\in \R\,,\\
&h_i\,: x\in \R\longmapsto h_i(x)\in \R
\end{array}\ee
which we assume satisfying:
\ms

\nd {\bf  Assumptions (A4):}\\
(I)\,\, For any $i\in A$:

(i) The mapping $(t,x)\rightarrow f_i(t,x,\overrightarrow{y},\z)$ is continuous uniformly with respect to $(\overrightarrow{y},\z)$ where $\overrightarrow{y}=(y^i)_{i=1,m}$ ;

(ii) The mapping $(\overrightarrow{y},\z)\mapsto f_i(t,x,\overrightarrow{y},\z)$ is Lipschitz continuous  uniformly $\wr$ $(t,x)$ ;

(iii) $f_i(t,x,0,0)$ is measurable and of polynomial growth ; 

(iv)  For any $U^1,U^2\in \cH^2(l^2)$, $X,Y\in \cS^2$, $i\in A$,  there
exist $V^{U^1,U^2,i}=(V^{U^1,U^2,i}_k)_{k\geq 1}$ (which may depend on $U^1$ and
$U^2$) that satisfy Assumption (A2) and such that :
\be\label{ineqxx} f_i(t,X_t,Y_t,U_t^1)-f_i(t,X_t,Y_t,U_t^2)\geq \langle
V^{U^1,U^2,i},(U^1-U^2)\rangle^p_t,\,\, d\cP\otimes dt-a.e.\ee

(v) For any $i\in A$ and $k\in A_i:=A-\{i\}$, the mapping $y_k\rightarrow f_i(t,x,y_1,\cdots,y_{k-1},y_k,y_{k+1},\cdots,y_m,\z)$ is nondecreasing whenever the other components $(t,x,y_1,\cdots,y_{k-1},y_{k+1},\cdots,y_m,\z)$ are fixed.\\

\nd (II) $\forall i,j\in A$, $g_{ii}\equiv 0$ and for $k\neq j$,
$g_{jk}(t,x)$ is non-negative, continuous with polynomial growth and
satisfy the following non free loop property:
\ms

For any $(t,x)\in
[0,T]\times \R$ and for any sequence of indices $i_1,\cdots,i_k$ such
that $i_1=i_k$ and $card\{i_1,\cdots,i_k\}=k-1$ we have$$g_{i_1
i_2}(t,x)+g_{i_2 i_3}(t,x)+\cdots+g_{i_k i_1}(t,x)>0.$$

\nd (III) $\forall i\in A$, $h_i$ is continuous with polynomial
growth and satisfies the following consistency condition:$$
h_i(x)\geq \max_{j\in A_i}(h_j(x)-g_{ij}(T,x)), \forall x\in
\R.$$

We now describe precisely the switching problem. Let $\Upsilon=((\theta_j)_{j\geq 0},(\alpha_j)_{j\geq 0})$ be an admissible strategy and let $a=(a_s)_{s\in [0,T]}$ be the process defined by
$$\forall s\leq T,\,\,a_s:=\alpha_0\mathbbm{1}_{\{\theta_0\}}(s)+\sum\limits^\infty_{j=1}\alpha_{j-1}\mathbbm{1}_{]\theta_{j-1}\theta_j]}(s),$$
where $\{\theta_j\}_{j\ge 0}$  is an increasing sequence of $\cF_t$-stopping
times with values in [0,T] and for $j\ge 0$, $\alpha_j$
is a random variable $F_{\theta_j}$-measurable with values in
$A=\{1,...,m\}$. If $\cP[\lim_{n}\th_n<T]=0$, then the pair
$\{\theta_j,\alpha_j\}_{j\geq 0}$ (or the process $a$) is called an admissible strategy
of switching. Next  we denote by $(A^a_s)_{s\leq T}$ the switching
cost process associated with an admissible strategy $a$, which is defined as
following:
\be\label{def_A}\forall s<T,\,\, A^a_s=\sum_{j\geq
1}g_{\alpha_{j-1},\alpha_j}(\theta_j,\xt_{\theta_j})\mathbbm{1}_{[{\theta_j\leq
s}]}\mbox{ and }  A^a_T=\lim_{s\rw T}A^a_s \ee where $X^{t,x}$ is the process given in
(\ref{SDE}). For $\eta \leq T$ and $i\in A$, we denote by $${\cal
A}^i_\eta:=\{a
\text{ admissible strategy such that }~ \alpha_0=i,\, \theta_0=\eta
\mbox{ and } E[(A^a_T)^2]<\infty\}.$$ Assume momentarily that for $i\in A$, the function $f_i$ of (\ref{introfct}) does not depend on $\vy$ and $\z$. For $t\leq T$ and a given admissible strategy
$a\in \cA^i_t$, we define the payoff $J_i^a(t,x)$ by:
\begin{align*}
J_i^{a}(t,x):=\E[\int_t^Tf_{a(s)}(s,\xt_s)ds+h_{a(T)}(\xt_T)-A^a_T]
\end{align*}where $f_{a(s)}(\dots)=f_{k}(\dots)$ (resp. $h_{a(T)}(.)=h_k(.)$) if at time $s$ (resp. $T$) $a(s)=k$ (resp. $a(T)=k$) ($k\in A$).
Finally let us define
\be \label{eqcout}J^i(t,x):=\sup_{a\in \cA^i_t}J_i^{a}(t,x),\,\,i=1,...,m.\ee
As a by-product of our main result which is given in Theorem \ref{exuncr} below, we get that
the functions $(J^i(t,x))_{i=1,\dots,m}$ is the unique continuous
viscosity solution of the Hamilton-Jacobi-Bellman system associated with this switching
problem (see Corollary \ref{corhjb}). \qed \ms

Let $(t,x)\in \esp$ and let us consider the following system of
reflected BSDEs with oblique reflection: $\forall j=1,...,m$ \be
\label{SRBSDE} \left\{
    \begin{array}{l}Y^{j}\in \cS^2,~~U^{j}\in \cH^2(\ell^2),~~K^{j} \in \cA^2;\\
      Y^{j}_s=h_j(X^{t,x}_T)+\int^T_s f_j(r,X^{t,x}_r,Y^{1}_r,Y^{2}_r,\cdots ,Y^{m}_r,U^{j}_r)dr
      -\sum\limits^\infty_{i=1}\int^T_s U^{j,i}_r dH^{(i)}_r +K^{j}_T-K^{j}_s, \,\,\forall s\leq T; \\
     \forall s\leq T,~~Y^{j}_s\geq \max\limits_{k\in Aj} \{Y^{k}_s-g_{jk}(s,X^{t,x}_s)\}\mbox{ and }
     \int^T_0\{Y^{j}_s-\max\limits_{k\in A_j}\{Y^{k}_s-g_{jk}(s,X^{t,x}_s)\}\}dK^{j}_s=0.\end{array}
    \right.\ee
Note that the solution of this BSDE depends actually on $(t,x)$ which we will omit for sake of simplicity, as far as there is no confusion. We then have the following result related to existence and uniqueness of the solution of (\ref{SRBSDE}).
\begin{thm} \label{maintheoremsystem} Assume that Assumption (A4)(I)(ii)-(iv), (A4)(II) and (A4)(III) are fulfilled. Then system of reflected BSDE with oblique reflection (\ref{SRBSDE}) has a unique solution.\end{thm}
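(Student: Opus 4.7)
The plan is to recast (\ref{SRBSDE}) as a fixed-point equation on an appropriate Banach space of $m$-tuples of processes. For $\vec Y=(Y^1,\ldots,Y^m)\in (\cS^2)^m$, define $\Phi(\vec Y)=(\hat Y^1,\ldots,\hat Y^m)$ where, for each $j\in A$, the triple $(\hat Y^j,\hat U^j,\hat K^j)$ is the unique solution --- furnished by Theorem $3.1$ --- of the scalar reflected BSDE driven by $L$ with terminal value $h_j(X^{t,x}_T)$, driver
$\tilde f_j(s,y,u):=f_j(s,X^{t,x}_s,Y^1_s,\ldots,Y^{j-1}_s,y,Y^{j+1}_s,\ldots,Y^m_s,u),$
and obstacle $L^j_s:=\max_{k\in A_j}\{Y^k_s-g_{jk}(s,X^{t,x}_s)\}$. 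A fixed point of $\Phi$, completed with its $(U^j,K^j)_{j\in A}$, is exactly a solution of (\ref{SRBSDE}), and conversely any solution of (\ref{SRBSDE}) is a fixed point of $\Phi$.

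\textbf{Well-posedness of each component RBSDE.} For each $j\in A$, Assumption (A1) is satisfied by $(\tilde f_j, h_j(X^{t,x}_T), L^j)$: $\cL^2$-integrability of the terminal value and of $\tilde f_j(\cdot,0,0)$ follows from the polynomial growth in (A4) together with the moment bound (\ref{sde1}); Lipschitz continuity in $(y,u)$ is precisely (A4)(I)(ii); the terminal inequality $L^j_T\leq h_j(X^{t,x}_T)$ follows from the consistency condition (A4)(III); finally the jump times of $L^j$ are inaccessible since those of $X^{t,x}$ --- and, inductively, of every $Y^k\in\cS^2$ produced by Theorem $3.1$ --- come from stochastic integrals against the Teugels martingales $H^{(i)}$. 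Theorem $3.1$ therefore applies and provides $(\hat Y^j,\hat U^j,\hat K^j)$ uniquely.

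\textbf{Contraction and conclusion.} Equip $(\cS^2)^m$ with the weighted norm $\|\vec Y\|_\beta^2:=\sum_j\E[\sup_{s\leq T}e^{\beta s}|Y^j_s|^2]$. For two inputs $\vec Y,\vec Y'$ with images $\hat{\vec Y}=\Phi(\vec Y)$ and $\hat{\vec Y}'=\Phi(\vec Y')$, It\^o's formula on $e^{\beta s}|\hat Y^j_s-\hat Y'^j_s|^2$ combined with the Lipschitz property of $f_j$ in $(\vec y,u)$ (the $u$-part being absorbed via the Girsanov-type change of probability of Proposition $3.2$ and (A4)(I)(iv), as in Proposition \ref{comp}), together with the flat-off identity
$(\hat Y^j_s-\hat Y'^j_s)(d\hat K^j_s-d\hat K'^j_s)\leq |L^j_s-L'^j_s|\,d(\hat K^j+\hat K'^j)_s\leq \max_{k\neq j}|Y^k_s-Y'^k_s|\,d(\hat K^j+\hat K'^j)_s,$
and with uniform $\cL^2$-control of $\hat K^j_T$ on bounded subsets of $(\cS^2)^m$, yields for $\beta$ large enough an estimate of the form $\|\hat{\vec Y}-\hat{\vec Y}'\|_\beta\leq \rho\|\vec Y-\vec Y'\|_\beta$ with $\rho<1$. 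Banach's fixed point theorem then produces a unique $\vec Y^*\in(\cS^2)^m$; supplementing it with the corresponding $(U^{j,*},K^{j,*})_{j\in A}$ from Step 1 delivers the desired solution of (\ref{SRBSDE}), and uniqueness is automatic as any solution must be a fixed point of $\Phi$.

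\textbf{Main obstacle.} The delicate step is closing the contraction. The inter-component coupling appears both in the driver --- where the $\beta$-weight absorbs it --- and in the obstacle, which produces a $\sup$-norm term integrated against the reflecting measure $d(\hat K^j+\hat K'^j)$ that is not controlled by $\beta$ alone. Making the argument go through therefore hinges on uniform $\cL^2$-bounds for $\hat K^j_T$ along the iteration, and this is precisely where the non-free-loop property (A4)(II) plays its structural role: it rules out the degenerate configurations in which successive almost-free switches would drive the reflecting local times to blow up.
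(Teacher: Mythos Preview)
Your fixed-point scheme has the right shape, but the contraction step --- as you yourself flag --- does not close. After the flat-off conditions you arrive at
\[
(\hat Y^j_s-\hat Y'^j_s)(d\hat K^j_s-d\hat K'^j_s)\leq |L^j_s-L'^j_s|\,d(\hat K^j+\hat K'^j)_s,
\]
and even granting uniform $\cL^2$-bounds on $\hat K^j_T,\hat K'^j_T$, Cauchy--Schwarz only gives
\[
\E\Big[\int_0^T e^{\beta s}|L^j_s-L'^j_s|\,d(\hat K^j+\hat K'^j)_s\Big]\;\leq\; C\,\|\vec Y-\vec Y'\|_\beta
\]
with a constant $C$ that does \emph{not} shrink as $\beta\to\infty$: the reflecting measure carries no exponential weight you can trade against. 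No choice of $\beta$ makes this contractive. The non-free-loop property (A4)(II) does not help here either; in the paper it is used to exclude predictable jumps of $K^j$, not to bound the total mass of the reflecting measures. This obstruction --- obstacle depending on the unknown --- is precisely why direct Picard iteration on scalar RBSDEs is known to fail for obliquely reflected systems.

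The paper sidesteps the difficulty by a two-stage argument. Step~1 temporarily adds the monotonicity hypothesis (A4)(I)(v), builds an \emph{increasing} iteration $Y^{j,n}\nearrow Y^j$ sandwiched between two scalar BSDEs, and passes to the limit via a monotonic-limit theorem; (A4)(II) is then invoked to show the limiting $K^j$ are continuous. Step~2 treats general $f_j$: the map $\Theta$ sends $\Gamma\in[H^2]^m$ to the solution of the \emph{full} oblique system with frozen driver $f_j(\cdot,\vec\Gamma,\cdot)$ (well-posed by Step~1), and the contraction is proved through the switching representation of Proposition~\ref{repres}. Writing $Y^{k,j}_{t_0}=\ess_{a}(V^{k,a}_{t_0}-A^a_{t_0})$ and fixing the optimal strategy $a^*$ of a dominating system reduces the estimate to an It\^o computation on the \emph{non-reflected} BSDEs $V^{k,a^*}$, where no $dK$-terms appear and the $\beta$-weight delivers the factor $8L^2/\beta$. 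This detour through the value-function representation is the key idea your proposal is missing.
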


\begin{proof} The proof follows the same lines as in \cite{chassagneux} and \cite{saidmorlais}. It will be given in two steps.
\ms

\nd \underline{Step 1}: We will first assume that the functions $f_i$, $i\in A$, verify (A4)(I)(ii)-(v). The other assumptions remain fixed.
\ms

Let us introduce the following standard BSDEs : 
\be \label{ini1}\left\{
\begin{array}{l}
\bar{Y}\in \cS^2,~~\bar U\in \cH^2(\ell^2);\\
\bar{Y}_s=\max\limits_{j=1,m}h_j(X^{t,x}_T)+\int^T_s \max\limits_{j=1,m}f_j(r,X^{t,x}_r,\bar{Y}_r,\cdots,\bar{Y}_r,\bar{U}_r)dr-\sum\limits^\infty_{i=1}\int^T_s \bar{U}^{i}_r dH^{(i)}_r,\,\,\forall s\leq T,\end{array}\right.\ee
and
\be \label{ini2}\left\{
\begin{array}{l}
\underbar{Y}\in \cS^2,~~\underbar U\in \cH^2(\ell^2);\\
\underbar{Y}_s=\min\limits_{j=1,m}h_j(X^{t,x}_T)+\int^T_s \min\limits_{j=1,m}f_j(r,X^{t,x}_r,\underbar{Y}_r,\cdots,\underbar{Y}_r,\underbar{U}_r)dr-\sum\limits^\infty_{i=1}\int^T_s \underbar{U}^{i}_r dH^{(i)}_r,\,\,\forall s\leq T.\end{array}\right.\ee
Note that thanks to Theorem 1 in \cite{nualart}, each one of the above BSDEs has a unique solution. Next for $j\in A$ and $n\geq 1$, let us define
$(Y^{j,n},U^{j,n},K^{j,n})$ by: \be \label{SQ}\left\{
    \begin{array}{ll}
    Y^{j,n}\in \cS^2,~~U^{j,n}\in \cH^2(\ell^2),~~K^{j,n} \in
    \cA^2\,;\\
    Y^{j,0}=\underbar{Y}\\
      Y^{j,n}_s=h_j(X^{t,x}_T)+\int^T_s f_j(r,X^{t,x}_r,Y^{1,n-1}_r,\cdots,Y^{j-1,n-1}_r,Y^{j,n}_r,Y^{j+1,n-1}_r,\cdots,Y^{m,n-1},U^{j,n}_r)dr\\\qq\qq\qq-\sum\limits^\infty_{i=1}\int^T_s U^{j,n,i}_r dH^{(i)}_r
       +K^{j,n}_T-K^{j,n}_s,\q \forall s\leq T;\\
     Y^{j,n}_s\geq \max\limits_{k\in A_j} (Y^{k,n-1}_r-g_{jk}(r,X^{t,x}_r)),~\forall s\leq T;~~\int^T_0 [Y^{j,n}_r-\max\limits_{k\in A_j}(Y^{k,n-1}_r-g_{jk}(r,X^{t,x}_r))]dK^{j,n}_r=0
     .
    \end{array}
    \right.\ee
By induction we can show that system (\ref{SQ}) has a unique
solution for any fixed $n\geq 1$ since when $n$ is fixed, (\ref{SQ}) reduces to $m$ decoupled reflected BSDEs of the form (\ref{RBSDE}). On the other hand it is  easily seen that $(\bar{Y}, \bar{U}, 0)$ is also a solution of :$$ \left\{
    \begin{array}{ll}
      \bar{Y}_s=\max\limits_{j=1,m}h_j(X^{t,x}_T)+\int^T_s \max\limits_{j=1,m}f_j(r,X^{t,x}_r,\bar{Y}_r,\cdots,\bar{Y}_r,\bar{U}_r)dr-\sum\limits^\infty_{i=1}\int^T_s \bar{U}^{i}_r dH^{(i)}_r+\bar{K}_T-\bar{K}_s,\forall s\leq T;\\
     \bar{Y}_s\geq \max\limits_{k\in A_j} (\bar{Y}_s-g_{jk}(s,X^{t,x}_s)),~\forall s\leq T;~~\int^T_0 [\bar{Y}_r-\max\limits_{k\in A_j}
     (\bar{Y}_s-g_{jk}(s,X^{t,x}_s))]d\bar{K}_r=0.
    \end{array}
    \right.$$
Next since for any $i\in A$, $f_i$ verifies Assumption A4(I)(ii)-(v), by Proposition \ref{comparison} and an induction argument, we get that $\cP$-a.s. for any $j,n$ and $s\leq T$, $Y^{j,n-1}_s\leq
Y^{j,n}_s\leq \bar{Y}_s.$ Then the sequence $(Y^{j,n})_{n\geq 0}$, has a limit which we denote by  $Y^j$, for any $j\in A$. By the monotonic limit theorem in \cite{fan},
$Y^j\in \cS^2$ and there exist $U^j\in \cH^2(\ell^2)$ and $K^j$ a non-decreasing process of $\cS^2$ such that: $\forall s\leq T$, \be
\label{eqlimcor}\left\{
    \begin{array}{ll}
      Y^{j}_s=h_j(X^{t,x}_T)+\int^T_s f_j(r,X^{t,x}_r,\overrightarrow{Y_r},U^{j}_r)dr-\sum\limits^\infty_{i=1}\int^T_s U^{j,i}_r dH^{(i)}_r +K^{j}_T-K^{j}_s,\\
     Y^{j}_s\geq \max\limits_{k\in A_j}
     (Y^{k}_{s}-g_{jk}(s,X^{t,x}_{s})),
    \end{array}
    \right.\ee
where for any $j\in A$, $U^j$ is the weak limit of $(U^{j,n})_{n\geq 1}$ in $\cH^2(\ell^2)$ and for any stopping time $\tau$, $K^j_\tau$ is the weak limit of $K^{j,n}_\tau$ in $L^2(\Omega, {\cal F}_\tau,\cP)$. Finally note that $K^j$ is predictable since the processes $K^{n,j}$ are so, for any $n\geq 1$.

Let us now consider the following RBSDE: \be \label{eqsmall}\left\{
    \begin{array}{ll}\hat{Y}^{j}\in \cS^2,~~\hat{U}^{j}\in \cH^2(\ell^2),~~\hat{K}^{j} \in
    \cS^2, \mbox { non-decreasing and }\hat{K}^{j}_0=0 ;\\
      \hat{Y}^{j}_s=h_j(X^{t,x}_T)+\int^T_s f_j(r,X^{t,x}_r,Y^1_r,\cdots,{Y}^{j-1}_r,\hat{Y}^{j}_r,{Y}^{j+1}_r,\cdots,Y^m_r,\hat{U}^{j}_r)dr-\sum\limits^\infty_{i=1}\int^T_s \hat{U}^{j,i}_r dH^{(i)}_r +\hat{K}^{j}_T-\hat{K}^{j}_s, \forall s\leq T;\\
     \hat{Y}^{j}_s\geq \max\limits_{k\in A_j} (Y^{k}_{s}-g_{jk}(s,X^{t,x}_{s})),~\forall s\leq T; ~~\int^T_0[\hat{Y}^j_{r-}-\max\limits_{k\in A_j} (Y^{k}_{r-}-g_{jk}(r,X^{t,x}_{r-}))]d\hat{K}^j_r= 0.
    \end{array}
    \right.\ee
    According to  Theorem 3.3 in  \cite{Aman}, this equation has a unique solution.
By Tanaka-Meyer's formula (see e.g.\cite{philip}, Theorem 68,
pp. 216), for all $j\in A$:
\begin{align*}(\hat{Y}^j_T-Y^j_T)^+=&(\hat{Y}^j_s-Y^j_s)^+ +\int^T_s
\mathbbm{1}_{\{\hat{Y}^j_{r-}-Y^j_{r-}>0\}}d(\hat{Y}^j_r-Y^j_r)\\
&+\sum\limits_{s<r\leq
T}[\mathbbm{1}_{\{\hat{Y}^j_{r-}-Y^j_{r-}>0\}}(\hat{Y}^j_r-Y^j_r)^-
+\mathbbm{1}_{\{\hat{Y}^j_{r-}-Y^j_{r-}\leq0\}}(\hat{Y}^j_r-Y^j_r)^+]+\frac{1}{2}L^0_t(\hat{Y}^j-Y^j)\end{align*}
where the process $(L^0_t(\hat{Y}^j-Y^j))_{t\leq T}$ is the local
time of the semi martingale $(\hat{Y}^j_s-Y^j_s)_{0\leq s\leq T}$ at
0 which is a nonnegative process. Then we have
\begin{align*}
(\hat{Y}^j_T-Y^j_T)^+\geq &(\hat{Y}^j_s-Y^j_s)^+ +\int^T_s
\mathbbm{1}_{\{\hat{Y}^j_{r-}-Y^j_{r-}>0\}}d(\hat{Y}^j_r-Y^j_r)\\
=&(\hat{Y}^j_s-Y^j_s)^+ -\int^T_s
\mathbbm{1}_{\{\hat{Y}^j_{r-}-Y^j_{r-}>0\}}[f_j(r,X^{t,x}_r,Y^1_r,\cdots,Y^{j-1}_r,\hat{Y}^{j}_r,Y^{j+1}_r\cdots,Y^m_r,\hat{U}^j_r)\\
&-f_j(r,X^{t,x}_r,Y^1_r,\cdots,{Y}^{j}_r,\cdots,Y^m_r,{U}^j_r)]dr-\int^T_s
\mathbbm{1}_{\{\hat{Y}^j_{r-}-Y^j_{r-}>0\}}d(\hat{K}^j_r-K^j_r)\\
&+\sum\limits^\infty_{i=1}\int^T_s\mathbbm{1}_{\{\hat{Y}^j_{r-}-Y^j_{r-}>0\}}
(\hat{U}^{j,i}_r-U^{j,i}_r)dH^{(i)}_r.
\end{align*}
First note that by $(\ref{eqsmall})$, $\int^T_s
\mathbbm{1}_{\{\hat{Y}^j_{r-}-Y^j_{r-}>0\}}d(\hat{K}^j_r-K^j_r)\leq
0$. Now by Assumption (A4)(I)(iv), we obtain:
\begin{align*}
(\hat{Y}^j_s-Y^j_s)^+&\leq \int^T_s
\mathbbm{1}_{\{\hat{Y}^j_{r-}-Y^j_{r-}>0\}}[f_j(r,X^{t,x}_r,Y^1_r,\cdots,\hat{Y}^{j}_r,\cdots,Y^m_r,\hat{U}^j_r)-f_j(r,X^{t,x}_r,Y^1_r,\cdots,{Y}^{j}_r,\cdots,Y^m_r,\hat{U}^j_r)\\
&+f_j(r,X^{t,x}_r,Y^1_r,\cdots,{Y}^{j}_r,\cdots,Y^m_r,\hat{U}^j_r)-f_j(r,X^{t,x}_r,Y^1_r,\cdots,{Y}^{j}_r,\cdots,Y^m_r,{U}^j_r)]dr\\
&-\sum\limits^\infty_{i=1}\int^T_s\mathbbm{1}_{\{\hat{Y}^j_{r-}-Y^j_{r-}>0\}}
(\hat{U}^{j,i}_r-U^{j,i}_r)dH^{(i)}_r\\
&\leq \int^T_s
\mathbbm{1}_{\{\hat{Y}^j_{r-}-Y^j_{r-}>0\}}C(\hat{Y}^j_{r-}-Y^j_{r-})^+dr+
\sum\limits_{i=1}^{\infty}\int^T_s
\mathbbm{1}_{\{\hat{Y}^j_{r-}-Y^j_{r-}>0\}}V^{U^j,\hat{U}^j,j}_i(r)(\hat{U}^{j,i}_r-U^{j,i}_r)dr\\
&\qq -\sum\limits^\infty_{i=1}\int^T_s\mathbbm{1}_{\{\hat{Y}^j_{r-}-Y^j_{r-}>0\}}
(\hat{U}^{j,i}_r-U^{j,i}_r)dH^{(i)}_r.
\end{align*}
Next for $t\leq T$, let us set  $M_t=\sum\limits_{i=1}^\infty \int^t_0
V^{U^j,\hat U^j,j}_i(r) dH^{(i)}_r$ and
$Z_t=\sum\limits_{i=1}^{\infty}\int^t_0
\mathbbm{1}_{\{\hat{Y}^j_{r-}-Y^j_{r-}>0\}}(\hat{U}^{j,i}_r-U^{j,i}_r)dH^{(i)}_r$ ($M$ and $Z$ depend on $j$ but this is irrelevant). By Proposition \ref{prop31},  $\varepsilon(M)\in\cS^2 $, $\varepsilon(M)>0$ and $\E[\varepsilon(M)_T]=1$. Then using Girsanov's Theorem (\cite{philip},
pp.136), under the probability measure $d\tilde{\cP}:=\varepsilon(M)_T
d\cP$, we obtain that the process $(\tilde{Z}_t=Z_t-<M,Z>_t)_{t\leq T}$ is a martingale and then $$\E_{\tilde{\cP}}[\sum\limits_{i=1}^{\infty}\int^T_s
\mathbbm{1}_{\{\hat{Y}^j_{r-}-Y^j_{r-}>0\}}V^{U^j,\hat U^j,j}_i(r)(\hat{U}^{j,i}_r-U^{j,i}_r)dr
-\sum\limits^\infty_{i=1}\int^T_s\mathbbm{1}_{\{\hat{Y}^j_{r-}-Y^j_{r-}>0\}}
(\hat{U}^{j,i}_r-U^{j,i}_r)dH^{(i)}_r]=-\E_{\tilde{\cP}}(\tilde{Z}_T-\tilde{Z}_s)=0.$$
Thus for any $s\le T$, $$\brl \E_{\tilde{\cP}}(\hat{Y}^j_s-Y^j_s)^+\leq
\E_{\tilde{\cP}}[\int^T_s C(\hat{Y}^j_{r}-Y^j_{r})^+dr]\ea$$ and finally
by Gronwall's Lemma, $\forall j\in A$, $\forall s\leq T$,
$(\hat{Y}^j_s-Y^j_s)^+=0~~\tilde{\cP}-a.s.$ and then also $\cP-a.s.$
since those probabilities are equivalent. It implies that $\cP$-a.s.,
$\hat{Y}^j\leq Y^j$ for any $j\in A$. On the other hand, since
$\forall n\geq 1$, $\forall j\in A$, $Y^{j,n-1}\leq Y^j$, then we have
$$\forall s\leq T,\,\,\max\limits_{k\in A_j} (Y^{k,n-1}_{s}-g_{jk}(s,X^{t,x}_{s}))\leq
\max\limits_{k\in A_j} (Y^{k}_{s}-g_{jk}(s,X^{t,x}_{s})).$$
Therefore by comparison, we obtain $Y^{j,n}\leq \hat{Y}^j$, and then $Y^j\leq \hat{Y}^j$ which implies $Y^j= \hat{Y}^j$, $\forall j\in A$.\\

Next by It\^o's formula applied to $(Y^j-\hat{Y}^j)^2$ we obtain: $\forall s\in [0,T]$,
$$(Y^j_s-\hat{Y}^j_s)^2=(Y^j_0-\hat{Y}^j_0)^2+2\int^s_0(Y^j_{r-}-\hat{Y}^j_{r-})d(Y^j_r-\hat{Y}^j_r)+\sum^\infty_{i=1}\sum^\infty_{k=1}\int^s_0(U^{j,i}_r-\hat{U}^{j,i}_r)(U^{j,k}_r-\hat{U}^{j,k}_r)d[H^{(i)},H^{(k)}]_r.$$
As $Y^j= \hat{Y}^j$ and taking expectation in both-hand sides of the previous equality to obtain
$$\bal
\E[\int_0^T\sum_{i\geq 1}(U^{j,i}_r-\hat{U}^{j,i}_r)^2dr]=0.\ea$$
It implies that $U^j=\hat U^j$, $dt\otimes d\cP$ and finally $K^j=\hat K^j$ for any $j\in A$, i.e. $(Y^j,U^j,K^j)_{j\in A}$ verify (\ref{eqsmall}).\\

Next we will  show that the predictable process $K^j$ does not have
jumps. First note that since $K^j$ is predictable then its jumping times are also predictable. So assume there exist $j_1\in A$ and a predictable stopping
time $\tau$ such that  $\dt
K^{j_1}_\tau=\dt \hat{K}^{j_1}_\tau>0$. As $Y^j$ verifies (\ref{eqsmall}) and since the martingale part in this latter equation has only inaccessible jump times then 
$\dt Y^{j_1}_{\tau}=-\dt
K^{j_1}_\tau=-\dt \hat{K}^{j_1}_\tau<0$. By the
second equality in (\ref{eqsmall}) we have \be
\label{eqbord1}{Y}^{j_1}_{\tau-}=\max\limits_{k\in A_{j_1}}
(Y^{k}_{\tau-}-g_{{j_1}k}(\tau,X^{t,x}_{\tau-})).\ee Now let $j_2\in
A_{j_1}$ be the optimal index in (\ref{eqbord1}), i.e.,
  $$Y^{j_2}_{\tau-}-g_{j_1,j_2}(\tau,X^{t,x}_{\tau})=Y^{j_1}_{\tau-}> {Y}^{j_1}_\tau\geq Y^{j_2}_{\tau}-g_{j_1,j_2}(\tau,X^{t,x}_{\tau}).$$
       Note that
       $g_{j_1,j_2}(\tau,X^{t,x}_{\tau-})=g_{j_1,j_2}(\tau,X^{t,x}_{\tau})$
       since the stopping time $\tau$ is predictable, and the
       process $(X^{t,x}_s)_{t\leq s\leq T}$ does not have
       predictable jump times.
Thus $\dt Y^{j_2}_\tau<0$ and once more we have, \be
\label{eqbord2}{Y}^{j_2}_{\tau-}=\max\limits_{k\in A_{j_2}}
(Y^{k}_{\tau-}-g_{{j_2}k}(\tau,X^{t,x}_{\tau-})).\ee We can now
repeat the same argument as many times as necessary, to deduce the existence of a loop $\ell_1, ...,\ell_{p-1},
\ell_p=\ell_1$ ($p \ge 2$) and $\ell_2\neq \ell_1$ such that
$$Y^{\ell_1}_{\tau-}=Y^{\ell_2}_{\tau-}-g_{\ell_1\ell_2}(\tau, X^{t,x}_{\tau-}),\cdots ,Y^{\ell_{p-1}}_{\tau-}=Y^{\ell_p}_{\tau-}-g_{\ell_{p-1}\ell_p}(\tau, X^{t,x}_{\tau-})$$ which implies that
$$
g_{\ell_1\ell_2}(\tau,
X^{t,x}_{\tau-})+\dots+g_{\ell_{p-1}\ell_p}(\tau,
X^{t,x}_{\tau-})=0
$$which is contradictory  with Assumption (A4)(II). It implies that $\Delta K^{j_1}_\tau=0$ and then $K^{j_1}$ is continuous since it is predictable. As $j$ is arbitrary in $A$, then the processes $K^j$  are continuous and taking into account (\ref{eqsmall}), we deduce that the triples $(Y^j,U^j,K^j)_{j\in A}$, is a solution for system (\ref{SRBSDE}). \qed
\ms

\nd \underline{Step 2}:
We now deal with the general case i.e. we assume that $f_i$, $i\in A$, do no longer satisfy the monotonicity assumption (A4)(I)(v) but
(A4)(I)(ii)-(iv) solely.
\ms

Let $j\in A$ and $t_0\in [0,T]$ be fixed. We should  stress here that we do not need to take $t_0=t$ since the result is valid for general stochastic process and not only of Markovian type as $X^{t,x}$. For $a\in \cA^j_{t_0}$ and  $\Gamma:=((\Gamma^l_s)_{s\in [0,T]})_{l\in A}\in [H^2]^m:=H^2\times \dots \times H^2$ ($m$ times), we introduce the unique
solution of the switched BSDE which is defined by: $\forall s\in [t_0,T]$,
\be\label{bsderep}
V^{a}_s=h_{a(T)}(X^{t,x}_T)+\int^T_s f_{a(r)}(r,X^{t,x}_r,\overrightarrow{\Gamma_r},N^a_r)dr-\sum\limits^\infty_{i=1}\int^T_s
N^{a,i}_rdH^{(i)}_r-A^a_T+A^a_s
\ee
where $V^{a}\in \cS^2$ and $N^a\in \cH^2(\ell^2)$ ($\overrightarrow{\Gamma_r}=(\Gamma^i_r)_{i\in A}$). First note that the solution of this equation exists and is unique since in setting, for $s\in [t_0,T]$, $\tilde V^{a}_s=V^{a}_s-A^a_s$ and $\tilde h^a_T=h_{a(T)}(X^{t,x}_T)-A^a_T$ this equation becomes standard and has a unique solution by Nualart et al.'s result (see \cite{nualart}, Theorem 1, pp.765). Moreover (see Appendix, Proposition \ref{repres}) we have the following link between the BSDEs (\ref{SRBSDE}) and (\ref{bsderep}),   \be\label{lienentreyetv}Y^j_{t_0}=\ess_{a\in {\cal A}^j_{t_0}} (V^a_{t_0}-A^a_{t_0})=V^{a^*}_{t_0}-A^{a^*}_{t_0}\ee
for some $a^*\in \cA^j_{t_0}$. Next let us introduce the following mapping $\Theta$ defined on $[H^2]^m$ by \be\label{defteta}\begin{array}{l}
\Theta:[H^2]^m\rightarrow [H^2]^m\\
\Gamma=(\Gamma^j)_{j\in A}\mapsto (Y^j)_{j\in A}\end{array}\ee
where $(Y^j,U^j,K^j)_{j\in A}$ is the unique solution of the following system of RBSDEs:\be\label{theta} \left\{
    \begin{array}{ll}
      Y^j_s=h_j(X^{t,x}_T)+\int^T_s f_j(r,X^{t,x}_r,\overrightarrow{\Gamma_r},U^j_r)dr-\sum\limits^\infty_{i=1}\int^T_s U^{j,i}_r dH^{(i)}_r +K^j_T-K^j_s,~\forall s\leq T;\\
      Y^j_s\geq \max\limits_{k\in A_j} \{Y^k_s-g_{jk}(s,X^{t,x}_s)\},~\forall s\leq T;~~\int^T_0[Y^j_s-\max\limits_{k\in A_j}\{Y^k_s-g_{jk}(s,X^{t,x}_s)\}]dK^j_s=0.
    \end{array}
    \right.\ee
By the result proved in Step 1, $\Theta$ is well-defined. Next for $\eta\in H^2$ let us
define $\|\cdot\|_{2,\beta}$ by$$\bal\|
\eta\|_{2,\beta}:=(\E[\int^T_0 e^{\beta
s}|\eta_s|^2ds])^{\frac{1}{2}},\ea$$ which is a norm of $H^2$, equivalent to $\|.\|$ and $(H^2,\|\cdot\|_{2,\beta})$ is a Banach space. Let now
$\Gamma^1$ and $\Gamma^2$ be two processes of $[H^2]^m$ and for $k=1,2$, let
$(Y^{k,j},U^{k,j},K^{k,j})_{j\in A}=\Theta(\Gamma^k)$, i.e., that satisfy: $\forall s\leq T$,
$$\left\{
    \begin{array}{ll}
      Y^{k,j}_s=h_j(X^{t,x}_T)+\int^T_s f_j(r,X^{t,x}_r,\overrightarrow{\Gamma_r^k},U^{k,j}_r)dr-
      \sum\limits^\infty_{i=1}\int^T_s U^{k,j,i}_r dH^{(i)}_r +K^{k,j}_T-K^{k,j}_s;\\
      Y^{k,j}_s\geq \max\limits_{q\in A_j} \{Y^{k,q}_s-g_{jq}(s,\xt_s)\};~~\int^T_0[Y^{k,j}_s-\max\limits_{q\in
      A_j}\{Y^{k,q}_s-g_{jq}(s,\xt_s)\}]dK^{k,j}_s=0.
    \end{array}
    \right.$$
Next let us define $(\hat{Y}^j)_{j\in A}$ through the following system of reflected BSDEs with oblique reflection: $\forall s\leq T$,
$$\left\{
    \begin{array}{ll}
      \hat{Y}^j_s=h_j(X^{t,x}_T)+\int^T_s f_j(r,X^{t,x}_r,\overrightarrow{\Gamma_r^1},\hat{U}^j_r)\vee f_j(r,X^{t,x}_r,\overrightarrow{\Gamma_r^2},\hat{U}^j_r)dr-\sum\limits^\infty_{i=1}\int^T_s \hat{U}^{j,i}_r dH^{(i)}_r +\hat{K}^j_T-\hat{K}^j_s\\
      \hat{Y}^j_s\geq \max\limits_{q\in A_j} \{\hat{Y}^q_s-g_{jq}(s,X^{t,x}_s)\};~~\int^T_0[\hat{Y}^j_s-\max\limits_{q\in
      A_j}\{\hat{Y}^q_s-g_{jq}(s,X^{t,x}_s)\}]d\hat{K}^j_s=0.
    \end{array}
    \right.$$ Recall once more that $a\in \cA^j_{t_0}$ and let us define $V^{k,a}$, $k=1,2$, and $\hat V^a$, via BSDEs,  by
    $$\bal
\hat{V}^a_s=h_{a(T)}(X^{t,x}_T)+\int^T_s f_{a(r)}(r,X^{t,x}_r,\overrightarrow{\Gamma_r^1},\hat{N}^a_r)\vee f_{a(r)}(r,X^{t,x}_r,\overrightarrow{\Gamma_r^2},\hat{N}^a_r)dr-\sum\limits^\infty_{i=1}\int^T_s\hat{N}^{a,i}_rdH^{(i)}_r
-A^a_T+A^a_s,\,\, s\le T, 
\ea$$
and for $k=1,2$, $$\bal V^{k,a}_s=h_{a(T)}(X^{t,x}_T)+\int^T_s
f_{a(r)}(r,X^{t,x}_r,\overrightarrow{\Gamma_r^k},N^{k,a}_r)dr-A^a_T+A^a_s-\sum\limits^\infty_{i=1}\int^T_s
N^{k,a,i}_rdH^{(i)}_r,\,\,s\le T.\ea$$ By Proposition \ref{app1} in Appendix, we have:
\be\label{3.8} Y^{k,j}_{t_0}=\ess_{a\in{\cal A}^j_{t_0}}(
V^{k,a}_{t_0}-A^a_{t_0}),~k=1,2 \mbox{ and } \hat{Y}^{j}_{t_0}=\ess_{a\in{\cal
A}^j_{t_0}} (\hat{V}^{a}_{t_0}-A^a_{t_0}):=\hat{V}^{{a}^*}_{t_0}-A^{a^*}_{t_0}. \ee

 In addition for $s\in [t_0,T]$, $f_{a(s)}$ verifies the inequality (\ref{ineqxx}) of Assumption (A4)(I)(iv). Actually let us set $a_s=\alpha_0\mathbbm{1}_{\{\theta_0\}}(s)+\sum\limits^\infty_{j=1}\alpha_{j-1}\mathbbm{1}_{]\theta_{j-1}\theta_j]}(s)$, $s\in [t_0,T]$, and let $U^1,U^2\in \cH^2(l^2)$, $X,Y\in \cS^2$. For any
 $s\in [t_0,T]$ we have:$$
\begin{array}{ll}
&f_{a(s)}(s,X_s,Y_s,U_s^1)-f_{a(s)}(s,X_s,Y_s,U_s^2)\\
&=[f_{\a_0}(s,X_s,Y_s,U_s^1)-f_{\a_0}(s,X_s,Y_s,U_s^2)]\mathbbm{1}_{\{\theta_0\le s\le \theta_1\}}+
\sum\limits_{j\ge 2}[f_{\a_{j-1}}(s,X_s,Y_s,U_s^1)-f_{\a_{j-1}}(s,X_s,Y_s,U_s^2)]\mathbbm{1}_{]\theta_{j-1},\theta_j]}(s)\\
&\geq \langle
V^{U^1,U^2,\a_0},(U^1-U^2)\rangle^p_s\mathbbm{1}_{\{\theta_0\le s\le \theta_1\}}+\sum\limits_{j\ge 2}\langle
V^{U^1,U^2,\a_{j-1}},(U^1-U^2)\rangle^p_s\mathbbm{1}_{]\theta_{j-1},\theta_j]}(s)=:\langle V^{U^1,U^2,a},(U^1-U^2)\rangle^p_s.
\end{array}$$
where for any $s\in [t_0,T]$, $$V^{U^1,U^2,a}_s:=(V^{U^1,U^2,a,i}_s)_{i\ge 1}=V^{U^1,U^2,\a_0}_s\mathbbm{1}_{\{\theta_0\le s\le \theta_1\}}+\sum\limits_{j\ge 2}
V^{U^1,U^2,\a_{j-1}}_s\mathbbm{1}_{]\theta_{j-1},\theta_j]}(s).$$
But on $[t_0,T]\times \Omega$,
$$\bal \cP\{\omega, \exists s\leq T, \mbox{ such that }\sum\limits_{i=1}^\infty
V^{U^1,U^2,a,i}_s(\omega)p_i(\Delta
L_s(\omega))\leq -1\}\\\qq\leq \sum_{j\in A}
\cP\{\omega, \exists s\leq T, \mbox{ such that }\sum\limits_{i=1}^\infty
V^{U^1,U^2,j,i}_s(\omega)p_i(\Delta L_s(\omega))\leq
-1\}=0\ea $$
which implies that $$\cP-a.s.,\,\forall s\in [t_0,T],\,\, 
\sum\limits_{i=1}^\infty
V^{U^1,U^2,a,i}_s(\omega)p_i(\Delta
L_s(\omega))>-1.
$$
On the other hand, on $[t_0,T]\times \Omega$,
$$\sum\limits^\infty_{i=1}|V^{U^1,U^2,a,i}_s|^2\leq \sum_{\ell\in A}\sum\limits^\infty_{i=1}|V^{U^1,U^2,\ell,i}_s|^2\leq C, \,\,ds\otimes d\cP-a.e.$$ 
Thus the process $V^{U^1,U^2,a}$ verifies Assumption (A2) and $f_{a(s)}$ satisfies Assumption (A4)(I)(iv) on $[t_0,T]$.\\

Consequently, by the comparison result of Proposition \ref{comp}, for any strategy
$a\in{\cal A}^j_{t_0}$, $\cP$-a.s. for any $s\in [t_0,T]$, $\hat{V}^a_s\geq V^{1,a}_s\vee V^{2,a}_s$. This combined with (\ref{3.8}) leads to  $
Y^{1,j}_{t_0}\vee Y^{2,j}_{t_0}\leq \hat{Y}^j_{t_0} =\hat{V}^{{a}^*}_{t_0}-A^{{a}^*}_{t_0}$. We then deduce $$V^{1,{a}^*}_{t_0}-A^{{a}^*}_{t_0}\leq Y^{1,j}_{t_0}\leq \hat{V}^{{a}^*}_{t_0}-A^{{a}^*}_{t_0} \mbox{ and }V^{2,{a}^*}_{t_0}-A^{{a}^*}_{t_0}\leq Y^{2,j}_{t_0}\leq \hat{V}^{{a}^*}_{t_0}-A^{{a}^*}_{t_0}$$ which implies  \be\label{add1}|Y^{1,j}_{t_0}-Y^{2,j}_{t_0}|\leq
|\hat{V}^{a^*}_{t_0}-V^{1,a^*}_{t_0}|+|\hat{V}^{a^*}_{t_0}-V^{2,a^*}_{t_0}|.\ee

Next we first estimate the quantity $|\hat{V}^{a^*}_{t_0}-V^{1,a^*}_{t_0}|$. For $s\in [t_0,T]$ let us set $\Delta V^{a^*}_s:=\hat{V}_s^{a^*}-V^{1,a^*}_s$ and
$\Delta N^{a^*}_s:=\hat{N}_s^{a^*}-N^{1,a^*}_s$. Applying It\^o's Formula to the process $e^{\beta s}|\Delta
V^{a^*}_s|^2$ we obtain: $\forall s\in [t_0,T]$,
$$\begin{array}{ll}
&e^{\beta s}|\Delta V^{a^*}_s|^2+\int^T_s e^{\beta r}\Vert\Delta N^{a^*}_r\Vert^2dr\\\\
&\qq =-\int^T_s\beta e^{\beta r}|\Delta V^{a^*}_{r-}|^2 dr-2\sum\limits^\infty_{i=1}\int^T_s e^{\beta r}\Delta V^{a^*}_{r-} \Delta N^{a^*,i}_rd H^{(i)}_r\\\\
&\qq \qq +2\int^T_s e^{\beta r}\Delta V^{a^*}_{r-}[f_{a^*(r)}(r,X^{t,x}_r,\overrightarrow{\Gamma_r^1},\hat{N}^{a^*}_r)\vee f_{a^*(r)}(r,X^{t,x}_r,\overrightarrow{\Gamma_r^2},\hat{N}^{a^*}_r)-f_{a^*(r)}(r,X^{t,x}_r,\overrightarrow{\Gamma_r^1},\hat{N}^{1,a^*}_r)]dr\\\\
&\qq \qq -\sum\limits^\infty_{i=1}\sum\limits^\infty_{l=1}\int^T_s  e^{\beta
r}\Delta N^{a^*,i}_r \Delta
N^{a^*,l}_rd([H^{(i)},H^{(l)}]_r-\langle H^{(i)},H^{(l)}\rangle_r).
\end{array}$$
By the Lipschitz property of $f_j$, $j\in A$, and then of $f_{a*}$ and the fact that for any $x,y\in \R$, $|x\vee y-y|\leq
|x-y|$ we have: $\forall s\in [t_0,T]$,
\be \label{eqnxx1}\begin{array}{l}
|f_{a^*(r)}(r,X^{t,x}_r,\overrightarrow{\Gamma_r^1},\hat{N}^{a^*}_r)\vee f_{a^*(r)}(r,X^{t,x}_r,\overrightarrow{\Gamma_r^2},\hat{N}^{a^*}_r)-f_{a^*(r)}(r,X^{t,x}_r,\overrightarrow{\Gamma_r^1},\hat{N}^{1,a^*}_r)|\\\\\qq
\le |f_{a^*(r)}(r,X^{t,x}_r,\overrightarrow{\Gamma_r^1},\hat{N}^{a^*}_r)\vee f_{a^*(r)}(r,X^{t,x}_r,\overrightarrow{\Gamma_r^2},\hat{N}^{a^*}_r)-f_{a^*(r)}(r,X^{t,x}_r,\overrightarrow{\Gamma_r^1},\hat{N}^{a^*}_r)|\\\qq\qq+
|f_{a^*(r)}(r,X^{t,x}_r,\overrightarrow{\Gamma_r^1},\hat{N}^{a^*}_r)-f_{a^*(r)}(r,X^{t,x}_r,\overrightarrow{\Gamma_r^1}_r,\hat{N}^{1,a^*}_r)|\\\\\qq
\leq L(|\overrightarrow{\Gamma_r^1}-\overrightarrow{\Gamma_r^2}|+\|\hat{N}^{a^*}_r-N^{1,a^*}_r\|).\ea
\ee
The inequality $2xy\leq
\frac{1}{\beta} x^2+\beta y^2$ (for any $\beta>0$ and $x,y\in \R$) and (\ref{eqnxx1}) yield: $\forall s\in [t_0,T]$,$$
\begin{array}{ll}
e^{\beta s}|\Delta{V}^{a^*}_s|^2 &\leq -\int^T_s e^{\beta
r}\Vert\Delta N^{a^*}_r\Vert^2dr-\int^T_s\beta e^{\beta
r}|\dt V^{a^*}_{r-}|^2
ds-2\sum\limits^\infty_{i=1}\int^T_s e^{\beta r}\Delta
V^{a^*}_{r-} \Delta N^{a^*,i}_rd H^{(i)}_r\\
&\qq +2L\int^T_s e^{\beta r}|\Delta V^{a^*}_{r-}|(|\overrightarrow{\Gamma_r^1}-\overrightarrow{\Gamma_r^2}|+\|\Delta N^{a^*}_r\|)dr\\
&\qq -\sum\limits^\infty_{i=1}\sum\limits^\infty_{l=1}\int^T_s  e^{\beta
r}\Delta N^{a^*,i}_r \Delta
N^{a^*,l}_rd([H^{(i)},H^{(l)}]_r-\langle H^{(i)},H^{(l)}\rangle_r)\\\\
{}&\leq-\int^T_s e^{\beta r}\Vert\Delta
N^{a^*}_r\Vert^2dr-\int^T_s\beta e^{\beta r}|\Delta
V^{a^*}_{r-}|^2 ds-2\sum\limits^\infty_{i=1}\int^T_s e^{\beta
r}\Delta
V^{a^*}_{r-} \Delta N^{a^*,i}_rd H^{(i)}_r\\
&\qq +\int^T_s\beta e^{\beta r}|\dt V^{a^*}_{r-}|^2
ds+\frac{L^2}{\beta}\int^T_s e^{\beta r}(|\overrightarrow{\Gamma_r^1}-\overrightarrow{\Gamma_r^2}|+\|\Delta
N^{a^*}_{r}\|)^2dr\\
&\qq -\sum\limits^\infty_{i=1}\sum\limits^\infty_{l=1}\int^T_s  e^{\beta
r}\Delta N^{a^*,i}_r \Delta
N^{a^*,l}_rd([H^{(i)},H^{(l)}]_r-\langle H^{(i)},H^{(l)}\rangle_r)\\\\
{}&\leq \q \frac{2L^2}{\beta}\int^T_s e^{\beta r}|\overrightarrow{\Gamma_r^1}-\overrightarrow{\Gamma_r^2}|^2dr-2\sum^\infty_{i=1}\int^T_s e^{\beta r}\Delta V^{a^*}_{r-} N^{a^*,i}_r dH^{(i)}_r\\
&\qq -\sum^\infty_{i=1}\sum^\infty_{l=1}\int^T_s  e^{\beta
r}\Delta N^{a^*,i}_r \Delta
N^{a^*,l}_rd([H^{(i)},H^{(l)}]_r-\langle H^{(i)},H^{(l)}\rangle_r),
\end{array}$$
for $\beta \geq 2L^2$. We deduce, in taking expectation, $$\forall s\in [t_0,T],\,\, \E[e^{\beta s}|\Delta\hat{V}^{a^*}_s|^2]\leq
\frac{2L^2}{\beta}\E[\int^T_s e^{\beta r}|\overrightarrow{\Gamma_r^1}-\overrightarrow{\Gamma_r^2}|^2dr].$$
Similarly, we get also $\forall s\in [t_0,T]$,  $$\E[e^{\beta
s}|\hat{V}^{a^*}_s-V^{2,a^*}_s|^2]\leq \frac{2L^2}{\beta}\E[\int^T_s
e^{\beta r}|\overrightarrow{\Gamma_r^1}-\overrightarrow{\Gamma_r^2}|^2dr].$$
Therefore by (\ref{add1}) we obtain: \be\label{add12}\E[e^{\beta t_0}|Y^{1,j}_{t_0}-Y^{2,j}_{t_0}|^2]\leq
\frac{8L^2}{\beta}\|\Gamma^1-\Gamma^2\|^2_{2,\beta}.\ee
As $t_0$ is arbitrary in $[0,T]$ then by integration $\wr$ $t_0$ we get \be
\label{map}\Vert\Theta(\Gamma^1)-\Theta(\Gamma^2)\Vert_{2,\beta}\leq
\sqrt{\frac{8L^2Tm}{\beta}}\Vert\Gamma^1-\Gamma^2\Vert_{2,\beta}.\ee
Henceforth for $\beta$ large enough, $\Theta$ is contraction on the Banach
space $([H^2]^m,\Vert .\Vert_{2,\beta})$, then it has a fixed point $(Y^j)_{j\in A}$ which has a version which is the unique solution of system of RBSDE (\ref{SRBSDE}).
\end{proof}
\begin{rem}
As a consequence of (\ref{add12}), there exists a constant $C> 0$,
such that $\forall j\in A$, $s\leq T$,
\be\label{add5}\E[|Y^{1,j}_s-Y^{2,j}_s|^2]\leq
C\Vert (Y^{1,j})_{j\in A}-(Y^{2,j})_{j\in A}\Vert_{2,\beta}^2.\ee
This estimate will be useful later. \qed
\end{rem}
\begin{cor}\label{croipoly}
Under Assumptions (A4), there exist  deterministic lower semi-continuous functions $(u^j(t,x))_{j\in A}$ of polynomial growth such that
$$
\forall (t,x)\in \esp, \,\, \forall s\in [t,T], \,\,Y^j_s=u^j(s,X^{t,x}_s),\,\,\forall j\in A.$$
\end{cor}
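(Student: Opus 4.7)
The plan has three parts: (i) deterministic functions $u^j$ with the Feynman--Kac identity via a Markov-property argument; (ii) polynomial growth via a priori estimates; (iii) lower semi-continuity via the iterative construction from the proof of Theorem \ref{maintheoremsystem}.

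For (i), fix $(t,x)\in [0,T]\times\R$ and let $(Y^{j,t,x},U^{j,t,x},K^{j,t,x})_{j\in A}$ denote the unique solution of (\ref{SRBSDE}). The process $X^{t,x}$ is constant on $[0,t]$ and its dynamics on $[t,T]$ depend only on the increments $L_r-L_t$, $r\in [t,T]$, which are independent of $\cF_t$ by the L\'evy property; consequently the whole solution of (\ref{SRBSDE}) on $[t,T]$ is measurable with respect to $\sigma(L_r-L_t:r\in [t,T])$. Being simultaneously $\cF_t$-measurable, $Y^{j,t,x}_t$ is $\cP$-a.s.\ constant, and we set $u^j(t,x):=Y^{j,t,x}_t$. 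The flow property $X^{t,x}_r=X^{s,X^{t,x}_s}_r$ for $r\in [s,T]$ combined with uniqueness of (\ref{SRBSDE}) restricted to $[s,T]$ with initial condition $(s,X^{t,x}_s)$ then gives the representation $Y^{j,t,x}_s=u^j(s,X^{t,x}_s)$ on $[t,T]$.

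For (ii), adapting the a priori estimate (\ref{estimateyku}) to the coupled system (\ref{SRBSDE}), together with the polynomial growth of $f_j(\cdot,\cdot,0,0)$, $h_j$ and $g_{jk}$ from Assumption (A4) and the moment bound (\ref{sde1}) on $X^{t,x}$, yields $\E[|Y^{j,t,x}_t|^2]\le C(1+|x|^{2p})$ for some constants $C,p$, whence $|u^j(t,x)|\le C(1+|x|^p)$.

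For (iii), I would exploit the Picard scheme from Step 2 of the proof of Theorem \ref{maintheoremsystem}, initialized at $\Gamma^0\equiv 0$. Each iterate $(Y^{j,n+1})_{j\in A}=\Theta(\Gamma^n)$ solves the system (\ref{theta}), whose driver does not depend on the coupling variables $Y^k$ for $k\neq j$, so Step 1 of that proof applies. Inside Step 1, the monotone inner approximation uses single-obstacle reflected BSDEs whose solutions are continuous deterministic functions of $X^{t,x}$ via the Feynman--Kac identity (\ref{UY}), and their increasing limit gives lsc functions $v^{j,n+1}$ with $Y^{j,n+1}_s=v^{j,n+1}(s,X^{t,x}_s)$. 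The contraction estimate (\ref{map}) yields pointwise convergence $v^{j,n}(t,x)\to u^j(t,x)$. The main obstacle is propagating lsc across this non-monotone limit; I would try to combine the uniform-in-$n$ polynomial growth of the $v^{j,n}$ with the geometric contraction rate to upgrade pointwise convergence to locally uniform convergence, from which lsc of $u^j$ would follow since a uniform limit of lsc functions is lsc.
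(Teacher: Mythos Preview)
Your route is unnecessarily indirect, and part (iii) carries exactly the gap you flag. The point you are missing is that Assumption (A4) already includes the monotonicity condition (A4)(I)(v), so Step~1 of Theorem~\ref{maintheoremsystem} applies \emph{directly} to the full system, not merely as an inner loop inside the Picard iteration of Step~2. The paper's proof is then essentially two lines: Step~1 builds $(Y^j)_{j\in A}$ as the increasing limit of the scheme~(\ref{SQ}); each $Y^{j,n}$ solves a single-obstacle reflected BSDE with continuous Markovian data, so by Ren--El~Otmani there exist continuous $u^{j,n}\in\Pi_g$ with $Y^{j,n}_s=u^{j,n}(s,X^{t,x}_s)$; and the comparison $\underbar Y\le Y^{j,n}\le Y^{j,n+1}\le \bar Y$ from Step~1 forces $\underbar u\le u^{j,n}\le u^{j,n+1}\le \bar u$. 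Setting $u^j:=\sup_n u^{j,n}$, lower semi-continuity is automatic (increasing limit of continuous functions), polynomial growth comes from the sandwich $\underbar u\le u^j\le \bar u$, and the representation follows since $Y^{j,n}\to Y^j$ in $\cS^2$. No Picard layer, no passage from non-monotone convergence to lsc, is needed.

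Your part (ii) also has a hidden difficulty. Estimate~(\ref{estimateyku}) is for a single-obstacle problem and its right-hand side contains $\sup_s|\Psi(s,X^{t,x}_s)|^2$. In the interconnected system the obstacle for $Y^j$ is $\max_{k\in A_j}(Y^k-g_{jk})$, so a direct adaptation bounds $Y^j$ in terms of the other $Y^k$ and is circular. The sandwich $\underbar u\le u^j\le \bar u$, with $\underbar u,\bar u$ coming from the unreflected BSDEs (\ref{ini1})--(\ref{ini2}), bypasses this entirely and is what the paper uses.
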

\begin{proof} This is a direct consequence of the construction by induction of the solution $(Y^j,U^j,K^j)_{j\in A}$ given in Step 1. Actually by Ren et al.'s result \cite{renotmani}, there exist  deterministic continuous functions of polynomial growth $\bar u(t,x)$, $\underbar u(t,x)$ and $u^{j,n}(t,x)$, $n\geq 0$ and $j\in A$, such that $\forall (t,x)\in \esp, \,\, \forall s\in [t,T]$\\
(a) $$
\bar Y_s=\bar u(s,X^{t,x}_s) \mbox { and }\underbar Y_s=\underbar u(s,X^{t,x}_s).$$
(b)
$$
Y^{j,n}_s=u^{j,n}(s,X^{t,x}_s), \forall j\in A, $$
and
$$
\underbar Y\leq Y^{j,n}\le Y^{j,n+1}\le \bar Y.
$$
This yields, for any $n\geq 0$ and $(t,x)\in \esp$,
$$
\underbar u(t,x)\leq u^n(t,x)\leq  u^{n+1}(t,x)\leq \bar u(t,x).
$$Thus $u_j(t,x):=\lim_{n\rw \infty}u^{j,n}(t,x)$, $j\in A$, verify the required properties since $(Y^{j,n})_n$ converges to $Y^j$, $j\in A$, in $\cS^2$.
\end{proof}

We now give a comparison result for solutions of systems $(\ref{SRBSDE})$. The induction argument allows to compare the solution of the approximating schemes, by Proposition \ref{comp}, and then to deduce the same property for the limiting processes.\\

\begin{rem}\label{remarquecomparaison}Let $(\bar Y^{j},\bar U^{j},\bar K^{j})_{j\in A}$ be a solution of the system of RBSDEs $(\ref{SRBSDE})$ associated with\\
$((\bar f_j)_{j\in A},(\bar g_{jk})_{j,k\in A},(\bar h_j)_{j\in A})$ which
satisfy
 ${\bf (A4)}$. If for any
$j,k\in A$, $$f_j\leq \bar f_j,\, h_j\leq \bar h_j,\, g_{jk}\geq \bar g_{jk}$$ then
for any $j\in A$, $Y^j\leq \bar Y^{j}.\qed $
\end{rem}
\section{Existence and uniqueness of the solution for the system of IPDEs with inter-connected obstacles}
This section focuses on the main result of this paper which is the proof of existence and uniqueness of a solution for the system of IPDEs introduced in the begining of this paper (\ref{edpintro}). For this objective we use its link with the system of RBSDEs (\ref{SRBSDE}). However we are led to make, hereafter, the following additional assumption because, basically, the hypothesis (A4)-(iv) is either artificial in this deterministic setting or not easy to verify.
\bs

\nd {\bf  Assumption (A5)}: For any $i\in A$, $f_i$ does not
depend on the variable $\z \in \ell^2$. \qed
\bs

So we are going to consider the following system of IPDEs: $\forall
i\in A$, \be \label{FIPDE}\left\{
    \begin{array}{ll}
      min\{u_i(t,x)-\max\limits_{j\in A_i}(u_j(t,x)-g_{ij}(t,x));\\\qq\qq
      -\partial_t u_i(t,x)-{\cal L} u_i(t,x)-f_i(t,x,u_1(t,x),\cdots, u_m(t,x))\}=0, \,(t,x)\in \esp ;\\
      u_i(T,x)=h_i(x)
    \end{array}
    \right.\ee
where $$ {\cal L} u(t,x)= {\cal L}^1 u(t,x)+{\cal I}(t,x,u)$$ with
\be \label{operateur}\begin{array}{l}  {\cal L}^1 u(t,x):=(\E[L_1]\sigma(t,x)
+b(t,x))\partial_x
u(t,x)+\frac{1}{2}\sigma(t,x)^2\varpi^2
D^2_{xx}u(t,x)\mbox{ and }
\\\\{\cal I}(t,x,u):=
\int_\R[u(t,x+\sigma(t,x)y)-u(t,x)-\partial_xu(t,x)\sigma(t,x)y]\Pi(dy).\end{array}\ee
Note that for any $\phi \in \ccp$ and
$(t,x)\in \esp$, the non-local term
\be\label{defiint}\begin{array}{l}\I(t,x,\phi):=\int_\R[\phi(t,x+\sigma(t,x)y)-
\phi(t,x)-\partial_x \phi(t,x)\sigma(t,x)y]\Pi(dy)\end{array}\ee is
well-defined. Actually let $\delta>0$ and let us define, for any
$q\in \R$,
\be\label{defiint2}\begin{array}{l}\I^{1,\delta}(t,x,\phi):=\int_{|y|\leq
\delta}[\phi(t,x+\sigma(t,x)y)- \phi(t,x)-\partial_x
\phi(t,x)\sigma(t,x)y]\Pi(dy),\end{array}\ee
\be\label{defiint3}\begin{array}{l}\I^{2,\delta}(t,x,q,u):=\int_{|y|>\delta}[u(t,x+\sigma(t,x)y)-
u(t,x)-q\sigma(t,x)y]\Pi(dy).\end{array}\ee By Taylor's expansion we have $$\begin{array}{l}
\phi(t,x+\sigma(t,x)y)- \phi(t,x)-\partial_x
\phi(t,x)\sigma(t,x)y=\int_0^y\sigma(t,x)^2D^2_{xx}\phi(t,x+\sigma(t,x)r)(y-r)dr.\end{array}
$$
But there exists a constant $C_{tx}$ such that for any $|r|\leq
\delta$, $|D^2_{xx}\phi(t,x+\sigma(t,x)r)|\leq C_{tx}$ since $\phi$ belongs to $\cC^{1,2}$ and $\sigma$ is bounded. Therefore for $|y|\le \d$, 
$$
|\phi(t,x+\sigma(t,x)y)-
\phi(t,x)-\partial_x
\phi(t,x)\sigma(t,x)y|\leq C_{tx} |y|^2
$$
which implies that $\I^{1,\delta}(t,x,\phi)\in \R$. Next for any $(t,x)$,
$\I^{2,\delta}(t,x,D_x\phi(t,x),\phi)\in \R$ since $\Pi$ integrates any power function outside $[-\epsilon,\epsilon]$. Henceforth $\I(t,x,\phi)$ is well-defined. \qed
\ms

We are now going to give the
definition of a viscosity solution of (\ref{FIPDE}). First for a locally bounded function $u$: $(t,x)\in [0,T]\times \R\rightarrow u(t,x)\in \R$, we define its lower semi-continuous (lsc for short) envelope $u_*$ and upper semi-continuous (usc for short) envelope $u^*$ as following:$$u_*(t,x)=\varliminf\limits_{(t',x')\rightarrow (t,x),~t'<T}u(t',x'), ~~~ u^*(t,x)=\varlimsup\limits_{(t',x')\rightarrow (t,x),~t'<T}u(t',x')$$

\begin{defi}\label{defvisco} A  function $(u_1,\cdots,u_m):[0,T]\times \R\rightarrow \R^m$ which belongs to $\pl$ such that for any $i\in A$, $u_i$ is usc (resp. lsc), is said to be a viscosity subsolution  (resp. supersolution) of (\ref{FIPDE}) if  for any $i\in A$, $\varphi\in \ccp$, $u_i(T,x)\leq h_i(x)$ (resp. $u_i(T,x)\geq h_i(x)$) and if  $(t_0,x_0)\in (0,T)\times \R$ is a global maximum (resp. minimum) point of $u_i-\varphi$,
\begin{align*}
min&\Big \{u_i(t_0,x_0)-\max\limits_{j\in A_i}\{u_j(t_0,x_0)-g_{ij}(t_0,x_0)\}\,;\,-\partial_t\varphi(t_0,x_0)-{\cal L}\varphi(t_0,x_0)\\
&-f_i(t_0,x_0,u_1(t_0,x_0),\cdots,u_{i-1}(t_0,x_0),u_i(t_0,x_0),\cdots,u_m(t_0,x_0))\Big \}\leq
0~~(resp.\geq 0).
\end{align*}
The function $(u_i)^m_{i=1}$ is called a viscosity
solution of (\ref{FIPDE}) if $(u_{i*})^m_{i=1}$ and $({u_{i}^*})^m_{i=1}$ are respectively viscosity supersolution and subsolution of (\ref{FIPDE}).\end{defi}

The following result is needed later.

\begin{lem}\label{modifsursol} Let $(u_i)^m_{i=1}$ be a supersolution of (\ref{FIPDE}) which belongs to $\pl$, i.e. for some $\gamma >0$ and $C>0$,
$$
|u_i(t,x)|\leq C(1+|x|^ \gamma),\,\forall (t,x)\in \esp \mbox{ and }i\in A.
$$
Then there exists $\lambda_0>0$ such that for any $\lambda\geq\lambda_0$ and $\theta >0$, $\overrightarrow{v}(t,x)=(u_i(t,x)+\theta e^{-\lambda t}(1+|x|^{2\gamma+2}))^m_{i=1}$ is supersolution of (4.1).
\end{lem}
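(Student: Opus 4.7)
The approach is to show that the perturbation $\phi(t,x) := \theta e^{-\lambda t}\psi(x)$, with $\psi(x):=1+|x|^{2\gamma+2}$, satisfies a strict supersolution-type inequality when $\lambda$ is sufficiently large; the supersolution property of $(v_i)=(u_i+\phi)_{i\in A}$ will then follow by a standard test-function manipulation. Note first that $\phi$ is smooth with polynomial growth, hence $\phi\in\ccp$, and $\phi>0$, so the terminal inequality $v_i(T,\cdot)\geq h_i$ is immediate from $u_i(T,\cdot)\geq h_i$.

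For the interior condition, let $\varphi\in\ccp$ and let $(t_0,x_0)\in(0,T)\times\R$ be a global minimum of $v_i-\varphi$. Setting $\tilde\varphi:=\varphi-\phi\in\ccp$, the identity $v_i-\varphi=u_i-\tilde\varphi$ shows that $(t_0,x_0)$ is also a global minimum of $u_i-\tilde\varphi$, and thus $\tilde\varphi$ is an admissible test function for the supersolution $u_i$ at $(t_0,x_0)$. The obstacle component of the inequality for $v_i$ is then trivial: the translates by $\phi$ cancel, so $v_i-\max_{j\neq i}(v_j-g_{ij})=u_i-\max_{j\neq i}(u_j-g_{ij})\geq 0$ at $(t_0,x_0)$. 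For the PDE component, decomposing $\varphi=\tilde\varphi+\phi$ and using the Lipschitz continuity of $f_i$ in its $\vec y$-arguments with constant $L$ (Assumption (A4)(I)(ii)) one obtains
\[
-\partial_t\varphi-\cL\varphi-f_i(t_0,x_0,\vec v)\;\geq\;\bigl[-\partial_t\tilde\varphi-\cL\tilde\varphi-f_i(t_0,x_0,\vec u)\bigr]+\bigl[-\partial_t\phi-\cL\phi\bigr]-Lm\,\phi,
\]
all terms evaluated at $(t_0,x_0)$. The first bracket is $\geq 0$ by the supersolution property of $u_i$, so the lemma reduces to the pointwise inequality $-\partial_t\phi-\cL\phi\geq Lm\,\phi$ on $\esp$ for all $\lambda$ above some threshold.

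Since $-\partial_t\phi=\lambda\phi$ and $\cL\phi(t,x)=\theta e^{-\lambda t}\cL\psi(x)$, it suffices to prove an estimate of the form $|\cL\psi(x)|\leq C_0\psi(x)$ for every $x\in\R$, with $C_0$ independent of $(t,x)$; then $\lambda_0:=Lm+C_0$ works. The local part $\cL^1\psi$ is controlled directly from $|\psi'(x)|=O(|x|^{2\gamma+1})$, $|\psi''(x)|=O(|x|^{2\gamma})$, the linear growth of $b$, and the uniform bound $|\sigma|\leq C_\sigma$ from (3.8). For the non-local part $\I(t,x,\psi)$, one splits the integral at $|y|=\delta$: on $\{|y|\leq\delta\}$ a second-order Taylor expansion of $\psi$ bounds the integrand by $C\,y^2(1+|x|^{2\gamma})$, which is integrable against $\Pi$ by (2.2); on $\{|y|>\delta\}$ one uses $(|x|+C_\sigma|y|)^{2\gamma+2}\leq C(|x|^{2\gamma+2}+|y|^{2\gamma+2})$ to bound the integrand by $C(1+|x|^{2\gamma+2})(1+|y|^{2\gamma+2}+|y|)$, and the remaining $\Pi$-integral is finite thanks to the exponential-moment assumption (2.3), which provides every polynomial moment of $\Pi$ outside a neighborhood of the origin. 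Summing the contributions gives $|\cL\psi(x)|\leq C_0\psi(x)$ and completes the argument.

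The only non-routine ingredient is this pointwise bound on the non-local operator: it requires simultaneously handling the small-jump singularity of $\Pi$ (through Taylor and $\int(1\wedge y^2)\Pi(dy)<\infty$) and the polynomial growth of the integrand at large $y$ (through the exponential moments granted by (2.3)). Once $|\cL\psi|\leq C_0\psi$ is established, the rest of the proof is a purely algebraic manipulation of the supersolution inequality.
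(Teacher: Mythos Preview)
Your proof is correct and follows essentially the same route as the paper: subtract the perturbation $\phi=\theta e^{-\lambda t}(1+|x|^{2\gamma+2})$ from the test function, invoke the supersolution property of $u_i$, and reduce everything to showing that $\lambda\psi$ dominates $\cL\psi$ and the Lipschitz correction. The only cosmetic difference is in the estimate of the non-local term: the paper uses a single Taylor bound $\bigl||x+\sigma y|^{2\gamma+2}-|x|^{2\gamma+2}-D_x|x|^{2\gamma+2}\sigma y\bigr|\le C|y|^2(|x|^{2\gamma}+|y|^{2\gamma})$ valid for \emph{all} $y$ (exploiting that $\sigma$ is bounded) and then integrates against $\Pi$ using the moment property (2.4), whereas you split at $|y|=\delta$ and handle small and large jumps separately; both arguments yield the same $C_0\psi$ control.
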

\begin{proof} As usual wlog we assume that the functions $(u_i)_{i=1,m}$ are $lsc$ and we use Definition \ref{defvisco}. Let $i\in A$ be fixed and  $\varphi^i\in \ccp$ such that $\varphi^i(s,y)-(u_i(s,y)+\theta e^{-\lambda s}(1+|y|^{2\gamma+2}))$ has a global maximum in $(t,x)\in(0,T)\times \R$ and $\varphi^i(t,x)= u_i(t,x)+\theta e^{-\lambda t}(1+|x|^{2\gamma+2})$. By Definition \ref{defvisco} we have:
\begin{align*}
min&\Big\{u_i(t,x)+\theta e^{-\lambda t}(1+|x|^{2\gamma+2})-\max\limits_{j\in A_i}(-g_{ij}(t,x)+(u_j(t,x)+\theta e^{-\lambda t}(1+|x|^{2\gamma+2})));\\
&-\partial_t(\varphi^i(t,x)-\theta e^{-\lambda t}(1+|x|^{2\gamma+2}))-\frac{1}{2}\sigma(t,x)^2\varpi^2D^2_{xx}(\varphi^i(t,x)-\theta e^{-\lambda t}(1+|x|^{2\gamma+2}))\\
&-(\sigma(t,x)\E(L_1)+b(t,x))D_x(\varphi^i(t,x)-\theta e^{-\lambda t}(1+|x|^{2\gamma+2}))-\int_{\R}[\varphi^i(t,x+\sigma(t,x)y)\\
&-\theta e^{-\lambda t}|x+\sigma(t,x)y|^{2\gamma+2}-(\varphi^i(t,x)-\theta e^{-\lambda t}|x|^{2\gamma+2})-D_x(\varphi^i(t,x)-\theta e^{-\lambda t}|x|^{2\gamma+2})\sigma(t,x)y]\Pi(dy)\\
&-f_i(t,x,\overrightarrow{u})\Big\}\geq 0.
\end{align*}
Then\be\label{ajousursol}
\begin{array}{ll}
&-\partial_t\varphi^i(t,x)-{\cal L}\varphi^i(t,x)-f_i(t,x,\overrightarrow{v}(t,x))\\\\
&\geq\theta\lambda e^{-\lambda t}(1+|x|^{2\gamma+2})-\frac{1}{2}\theta e^{-\lambda t} \sigma(t,x)^2\varpi^2D^2_{xx}|x|^{2\gamma+2}-(\sigma(t,x)\E(L_1)+b(t,x))D_x(\theta e^{-\lambda t}|x|^{2\gamma+2})\\
&\q -\int_{\R}(\theta e^{-\lambda t}|x+\sigma(t,x)y|^{2\gamma+2}-\theta
e^{-\lambda t}|x|^{2\gamma+2}-\theta e^{-\lambda
t}D_x|x|^{2\gamma+2}\sigma(t,x)y)\Pi(dy)+f_i(t,x,\overrightarrow{u}(t,x))\\
&\q -f_i(t,x,\overrightarrow{v}(t,x))\\\\
&\geq\theta e^{-\lambda t}\Big \{\lambda (1+|x|^{2\gamma+2})-\frac{1}{2}\sigma(t,x)^2\varpi^2D^2_{xx}|x|^{2\gamma+2}-(\sigma(t,x)\E(L_1)+b(t,x))D_x|x|^{2\gamma+2}\\
&\q -\int_{\R}(|x+\sigma(t,x)y|^{2\gamma+2}-|x|^{2\gamma+2}-D_x|x|^{2\gamma+2}\sigma(t,x)y)\Pi(dy)+\sum\limits^m_{k=1}C^{k,i}_{t,x,\theta,\lambda}(1+|x|^{2\gamma+2})\Big\}
\end{array}\ee
where $C^{k,i}_{t,x,\theta,\lambda}$ is bounded by the Lipschiz
constant of $f_i$ with respect to $(y^i)_{i=1,\cdots,m}$ which is independent of $\theta$. But, since $\phi(y)=|y|^{2\gamma+2}\in
\ccp$, then the non-local term is well-defined. Now let us set $\psi(\rho):=\phi(x+\rho\sigma(t,x)y)$, for $\rho, x,y\in \R$. First note that for any $t,x,y$ we have
\begin{align*}
|x+\sigma(t,x)y|^{2\gamma+2}&-|x|^{2\gamma+2}-D_x|x|^{2\gamma+2}\sigma(t,x)y|
=|\psi(1)-\psi(0)-D_\rho\psi(0)|=|\int^1_0(1-\rho)\psi^{(2)}(\rho)d\rho|\\
\leq\q & C |y|^{2}(|x|^{2\gamma}+|y|^{2\gamma}).
\end{align*}
Therefore by (\ref{levy1}) we have 
$$\begin{array}{ll}
\int_{\R}||x+\sigma(t,x)y|^{2\gamma+2}-|x|^{2\gamma+2}-D_x|x|^{2\gamma+2}\sigma(t,x)y|\Pi(dy)
&\le C\int_{\R}|y|^2(|x|^{2\gamma}+|y|^{2\gamma})\Pi(dy)\\&\leq C (1+|x|^{2\gamma}).
\end{array}$$
It follows that there exists a constant $\lambda_0\in \R^+$ which does not
depend on $\theta$ such that if $\lambda\geq\lambda_0$ then the right-hand side of (\ref{ajousursol}) is non-negative for any $i\in A$. Thus $\vec{v}$ is a viscosity supersolution of (\ref{FIPDE}), which is the desired result.
\end{proof}
\begin{rem} \label{modifsub}In the same way one can show that if $(u_i)^m_{i=1}$ is a viscosity subsolution of (\ref{FIPDE}) which belongs to $\pl$, i.e. for some $\gamma >0$ and $C>0$,
$$
|u_i(t,x)|\leq C(1+|x|^\gamma),\,\forall (t,x)\in \esp \mbox{ and }i\in A.
$$
Then there exists $\lambda_0>0$ such that for any $\lambda\geq\lambda_0$ and $\theta >0$, $\overrightarrow{v}(t,x)=(u_i(t,x)-\theta e^{-\lambda t}(1+|x|^{2\gamma+2}))^m_{i=1}$ is subsolution of (\ref{FIPDE}). \qed
\end{rem}
\subsection{Existence of the viscosity solution of system (\ref{FIPDE})}
In this section we deal with the issue of existence of the viscosity
solution of $(\ref{FIPDE})$. Recall  that $(Y^j,U^j,K^j)_{j\in A}$ is the unique solution of
$(\ref{SRBSDE})$ and let $(u_j(t,x))_{j\in A}$ be the functions defined in Corollary  \ref{croipoly}.
\begin{thm}Assume Assumptions ${\bf (A4)}$ and ${\bf (A5)}$ and (\ref{eqlipb}), (\ref{sigma}) as well, then $(u_j(t,x))_{j\in A}$ is a viscosity solution of $(\ref{FIPDE})$.\end{thm}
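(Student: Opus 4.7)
The strategy is to exploit the iterative construction of $(Y^j)_{j\in A}$ given in Step 1 of the proof of Theorem \ref{maintheoremsystem} together with the Feynman-Kac formula for one-obstacle reflected BSDEs recalled at the end of Section 3.1. For each $n\geq 1$ and $j\in A$, the triple $(Y^{j,n},U^{j,n},K^{j,n})$ solves a reflected BSDE of type (\ref{RBSDE}) whose obstacle $\max_{k\neq j}(Y^{k,n-1}_s-g_{jk}(s,X^{t,x}_s))$ and driver only involve previously computed iterates, which by induction are deterministic functions of $(s,X^{t,x}_s)$. Applying Theorem 5.8 of \cite{renotmani} at each induction step yields continuous functions $u^{j,n}\in\pl$ with $Y^{j,n}_s=u^{j,n}(s,X^{t,x}_s)$, and each $u^{j,n}$ is the unique viscosity solution of the one-obstacle IPDE
\[
\left\{\begin{array}{l}
\min\{u^{j,n}(t,x)-\max_{k\neq j}(u^{k,n-1}(t,x)-g_{jk}(t,x));\\
\qquad -\partial_t u^{j,n}(t,x)-\cL u^{j,n}(t,x)-f_j(t,x,u^{1,n-1},\dots,u^{j,n},\dots,u^{m,n-1})(t,x)\}=0,\\
u^{j,n}(T,x)=h_j(x).
\end{array}\right.
\]
By Corollary \ref{croipoly}, $u^{j,n}\nearrow u_j$ pointwise as $n\to\infty$, hence $u_j$ is lsc and lies in $\pl$.

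For the supersolution property of $u_j$, I would fix $\varphi\in\ccp$ and a strict global minimum point $(t_0,x_0)$ of $u_j-\varphi$. By the classical lemma on monotone increasing limits of lsc functions (see e.g. \cite{barles}), one extracts points $(t_n,x_n)\to(t_0,x_0)$ at which $u^{j,n}-\varphi$ attains a local minimum with $u^{j,n}(t_n,x_n)\to u_j(t_0,x_0)$. Writing the supersolution inequality for $u^{j,n}$ at $(t_n,x_n)$ and passing to the limit --- using continuity of $g_{jk}$, Lipschitz continuity of $f_j$ in the $\vy$-argument, and continuity of $\partial_t\varphi$ and $\cL\varphi$ on $\esp$ (the non-local term $\I(\cdot,\cdot,\varphi)$ being evaluated on the $\cC^{1,2}$ test function, not on $u_j$) --- yields the supersolution inequality at $(t_0,x_0)$.

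For the subsolution property, one works with the usc envelope $u_j^*$ and applies the standard dichotomy: either $u_j^*(t_0,x_0)\leq\max_{k\neq j}(u_k^*(t_0,x_0)-g_{jk}(t_0,x_0))$, in which case the first entry of the $\min$ is nonpositive, or the strict inequality $Y^j_s>\max_{k\neq j}(Y^k_s-g_{jk}(s,X^{t,x}_s))$ persists on a small stochastic neighborhood of $(t_0,x_0)$ by upper semicontinuity of $u_k^*$ and continuity of $g_{jk}$, forcing $dK^j=0$ there via the Skorokhod condition. On that random neighborhood $Y^j$ obeys a standard non-reflected BSDE, and the usual viscosity argument (comparison with perturbed test functions) delivers the PDE inequality $-\partial_t\varphi(t_0,x_0)-\cL\varphi(t_0,x_0)-f_j(t_0,x_0,u_1^*(t_0,x_0),\dots,u_m^*(t_0,x_0))\leq 0$. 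The terminal conditions $u_j(T,x)=h_j(x)$ are immediate from $Y^j_T=h_j(X^{t,x}_T)$.

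The main obstacle I anticipate is the subsolution step, because the monotone approximation $u^{j,n}\nearrow u_j$ converges only from below and therefore does not directly test $u_j^*$ from above in the usual viscosity sense. One has to argue at the level of the BSDE, controlling the reflecting process $K^j$ on a random neighborhood where the obstacle is strictly inactive, and then transferring this BSDE information back to a pointwise viscosity inequality. The non-local operator $\I$ requires particular care here, since its stability under pointwise monotone limits of merely lsc functions (rather than $\cC^{1,2}$) is the standard subtle point in the L\'evy setting and must be reconciled with the local-in-space reflection argument.
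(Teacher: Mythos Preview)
Your supersolution argument is essentially the paper's, but your treatment of the non-local term is incomplete. At a merely \emph{local} minimum $(t_n,x_n)$ of $u^{j,n}-\varphi$ you cannot invoke Definition \ref{defvisco} (which requires a global minimum) to get the inequality with $\I(\cdot,\cdot,\varphi)$ evaluated on the test function. The paper handles this by working with the equivalent local Definition \ref{defviscoloc22}: for $(t_n,x_n)$ a minimum on $[0,T]\times B(x_n,C_\sigma\delta)$ one obtains the inequality with the split $\I^{1,\delta}(t_n,x_n,\varphi)+\I^{2,\delta}(t_n,x_n,D_x\varphi(t_n,x_n),u_j^n)$, the second piece being evaluated on $u_j^n$ itself. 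Passing to the limit then requires Fatou's lemma on $\I^{2,\delta}(\cdot,\cdot,\cdot,u_j^n)$, using $u_j\geq\varphi$ to close the argument. Your sentence ``the non-local term being evaluated on the $\cC^{1,2}$ test function, not on $u_j$'' is precisely what fails at a local minimum.

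The subsolution step contains a genuine gap. First, the terminal condition: $Y^j_T=h_j(X^{t,x}_T)$ gives $u_j(T,x)=h_j(x)$, but since $u_j$ is only lsc, this does \emph{not} yield $u_j^*(T,x)\leq h_j(x)$. The paper devotes a separate argument (Step~2, first part) to proving $u_j^*(T,x)=h_j(x)$ via a contradiction using It\^o's formula on a carefully designed test function and the non-free-loop property. Second, and more seriously, your dichotomy breaks down: the strict inequality $u_j^*(t_0,x_0)>\max_{k}(u_k^*(t_0,x_0)-g_{jk}(t_0,x_0))$ lives at the level of usc envelopes, whereas $Y^j_s=u_j(s,X^{t,x}_s)$ and $Y^k_s=u_k(s,X^{t,x}_s)$ involve the lsc functions $u_j,u_k$ themselves. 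There is no reason the envelope inequality transfers to a pathwise inequality $Y^j>\max_k(Y^k-g_{jk})$ on a stochastic neighborhood, so you cannot conclude $dK^j=0$ there. The paper avoids this entirely by staying on the PDE side: it identifies $u_j^*=\limsup^*_n u_j^n$, extracts subsequences $(t_{n_k},x_{n_k})\to(t_0,x_0)$ with $u_j^{n_k}(t_{n_k},x_{n_k})\to u_j^*(t_0,x_0)$, writes the subsolution inequality for $u_j^{n_k}$ using Definition \ref{defviscoloc22}, and passes to the limit using monotonicity of $f_j$ (Assumption (A4)(I)(v)) together with an adapted Fatou argument on $\I^{2,\delta}$.
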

\begin{proof} The proof will be divided
into two steps.

\nd \underline{Step 1}: We first show that $(u_j)^m_{j=1}$ is a
supersolution of (\ref{FIPDE}). We will use Definition \ref{defvisco}. Note that for all $j\in A$, as $u_j$ is lsc,
we then have $u_{j_*}=u_j$. Next let us set $u^n_j(t,x)=Y^{j,n,t,x}_t$, where
$(Y^{j,n,t,x}; U^{j,n,t,x},K^{j,n,t,x})_{j\in A}$ is the unique solution of (\ref{SQ}). As pointed out in Corollary \ref{croipoly}, for any $n\geq 0$, $(t,x)\in \esp$ and $s\in [t,T]$,
$$
Y^{j,n,t,x}_s=u^n_j(s,X^{t,x}_s)\mbox{ and }u^n_j(t,x)\nearrow u_j(t,x).$$ Additionally by induction for any $n\ge 0$, $(u^n_j)_{j\in A}$, are continuous, belong  to $\pl$ and by Ren et al.'s result (\cite{renotmani}, Theorem 5.8) verify in viscosity sense the following system $(n\geq 1)$: $\forall j\in A$,
    \be\label{sq2}\left\{
    \begin{array}{ll}
      min\Big\{u^n_j(t,x)-\max\limits_{k\in A_j}(u_j^{n-1}(t,x)-g_{jk}(t,x));\\
      -\partial_t u_j^{n}(t,x)-{\cal L}
      u^n_j(t,x)-f_j(t,x,(u_1^{n-1},\cdots,u_{j-1}^{n-1},u_j^{n},u_{j+1}^{n-1},\cdots,u_m^{n-1})(t,x))\Big\}=0;\\
      u_j^{n}(T,x)=h_j(x).
    \end{array}
    \right.\ee
First note that for any $j\in A$, $u_j$ verifies
$$
u_j(T,x)=h_j(x) \mbox{ and }u_j(t,x)\geq \max_{k\in A_j}\{u_k(t,x)-g_{jk}(t,x)\}, \,\,\forall (t,x)\in \esp .
$$
Now let $j\in A$, $(t,x)\in (0,T)\times \R$ and $\phi$ a function which belongs to $\ccp$ such that $u_j-\phi$ has a global minimum in $(t,x)$ on $\esp$ (wlog we assume it strict and that $u_j(t,x)=\phi(t,x)$). Next let $\d>0$ and for $n\geq 0$ let $(t_n,x_n)$ be the global minimum of $u_j^n-\phi$ on $ [0,T]\times B'(x,2\d C_\sigma)$ ($C_\sigma$ is the constant of boundedness of the diffusion coefficient $\sigma$ which appears in (\ref{sigma}) and $B'$ stands for the closure of the ball $B$). Therefore
$$
(t_n,x_n)\rw_n(t,x) \mbox{ and }u_j^n(t_n,x_n)\rw_n u(t,x).$$
Actually let us consider a convergent subsequence of $(t_n,x_n)$, which we still denote by $(t_n,x_n)$, and let set $(t^*,x^*)$ its limit. Then
\be\label{eqtnxn}
u_j^n(t_n,x_n)-\phi(t_n,x_n)\leq u_j^n(t,x)-\phi(t,x).
\ee
Taking the limit wrt $n$ and since $u_{j*}=u_j$ is lsc to obtain
$$
u_j(t^*,x^*)-\phi(t^*,x^*)\leq u_j(t,x)-\phi(t,x).
$$As the minimum $(t,x)$ of $u_j-\phi$ on $\esp$ is strict then $(t^*,x^*)=(t,x)$. It follows that the sequence $((t_n,x_n))_n$ converges to $(t,x)$. Going back now to (\ref{eqtnxn}) and sending $n$ to infinity to obtain
$$
u_{j*}(t,x)=u_j(t,x)\leq \liminf_nu_j^n(t_n,x_n)\leq
\limsup_nu_j^n(t_n,x_n)\leq u_j(t,x)
$$
which implies that $u_j^n(t_n,x_n)\rw_n u_j(t,x)$.

Now for $n$ large enough $(t_n,x_n)\in (0,T)\times B(x,2C_\sigma\delta)$ and it is the global minimum of $u^n_j-\phi$ in
$[0,T]\times B(x_n,C_\sigma\delta)$. As $u_j^n$ is a supersolution of (\ref{sq2}), then by Definition \ref{defviscoloc22} in Appendix we have
\be\label{eqlim}\begin{array}{l}
 -\partial_t\phi(t_n,x_n)-{\cal L}^1
    \phi(t_n,x_n)-\I^{1,\delta}(t_n,x_n,\phi)\ge 
     \I^{2,\delta}(t_n,x_n,D_x\phi(t_n,x_n),u_j^{n})\\\qq\qq +f_j(t_n,x_n,u_1^{n-1}(t_n,x_n),\cdots,u_{j-1}^{n-1}(t_n,x_n),u_j^{n}(t_n,x_n),u_{j+1}^{n-1}(t_n,x_n),\cdots,u_m^{n-1}(t_n,x_n)).\end{array}
\ee
But there exists a subsequence of $\{n\}$ such that:

(i) for any $k\in A_j$, $(u_k^{n-1}(t_n,x_n))_n$ is convergent and then $\lim_nu_k^{n-1}(t_n,x_n)\geq u_{k*}(t,x)=u_k(t,x)$ ;

(ii) $(\I^{1,\delta}(t_n,x_n,\phi))_n\rw_n
\I^{1,\delta}(t,x,\phi)$.
\ms

\nd Next by Fatou's Lemma and since $u_{j*}=u_j$ and $u_j\geq \phi$ we have 
\be \lb{ineqfatou}\begin{array}{ll}
\liminf_{n\rw \infty}\I^{2,\delta}(t_n,x_n,D_x\phi(t_n,x_n),u_j^{n})&\geq \I^{2,\delta}(t,x,D_x\phi(t,x),u_j)\\
{}&\geq \I^{2,\delta}(t,x,D_x\phi(t,x),\phi).
\ea\ee
\noindent Taking the $\liminf$ wrt to $n$ (through the previous subsequence) in each hand-side of (\ref{eqlim}), using the fact that $f_j$ is continuous and verifies (A4)(I)(v) and finally by (\ref{ineqfatou}) to obtain:
$$\begin{array}{l}
 -\partial_t\phi(t,x)-{\cal L}^1
    \phi(t,x)-\I^{1,\delta}(t,x,\phi)
    \geq \\\qq\qq \qq \I^{2,\delta}(t,x,D_x\phi(t,x),u_{j})+f_j(t,x,u_1(t,x),\cdots,u_{j-1}(t,x),u_{j}(t,x),u_{j+1}(t,x),\cdots,u_{m}(t,x)).\end{array}
$$As $u_j\geq \phi$ and since $\I(\dots)=\I^{1,\delta}(\dots)+\I^{2,\delta}(\dots)$ we then obtain from the previous inequality,  
$$\begin{array}{l}
 -\partial_t\phi(t,x)-{\cal L}^1
    \phi(t,x)
    \geq \\\qq\qq \qq \I(t,x,\phi)+f_j(t,x,u_1(t,x),\cdots,u_{j-1}(t,x),u_{j}(t,x),u_{j+1}(t,x),\cdots,u_{m}(t,x))\end{array}
$$which means that $u_j$ is a viscosity supersolution of
$$\left\{
    \begin{array}{ll}
      min\{u_j(t,x)-\max\limits_{k\in A_j}(u_k(t,x)-g_{jk}(t,x));\\\qq\qq
      -\partial_t u_j(t,x)-{\cal L} u_j(t,x)-f_j(t,x,u_1(t,x),\cdots, u_m(t,x))\}=0\,\,;\\
      u_j(T,x)=h_j(x).
    \end{array}
    \right.$$
As $j$ is arbitrary then $(u_j)_{j\in A}$ is a viscosity supersolution of (\ref{FIPDE}). \qed
\ms

\nd \underline{Step 2}: We will now show that $(u^*_j)_{j\in A}$ is
a subsolution of (\ref{FIPDE}). As a first step we are going to show that
$$\forall j\in A,\,\,min\{u^*_j(T,x)-h_j(x); ~~u^*_j(T,x)-\max\limits_{k\in
A_j}(u^*_k(T,x)-g_{jk}(T,x))\}=0.$$ By definition of $u^*_j$ and
since $u^n_j\nearrow u_j $, we have
$$min\{u^*_j(T,x)-h_j(x); ~~u^*_j(T,x)-\max\limits_{k\in
A_j}(u^*_k(T,x)-g_{jk}(T,x))\}\geq 0$$ Next suppose that for some
$x_0\in \R$, $\exists j>0$, s.t.$$min\{u^*_j(T,x_0)-h_j(x_0);
~~u^*_j(T,x_0)-\max\limits_{k\in
A_j}(u^*_k(T,x_0)-g_{jk}(T,x_0))\}=2\epsilon .$$
We will show that leads to a contradiction. Let $(t_k,x_k)_{k\geq 1}\rightarrow (T,x_0)$ and
$u_j(t_k,x_k)\rightarrow u^*_j(T,x_0)$. We can find a sequence of functions
$(v^n)_{n\geq 0}\in \cC^{1,2}([0,T]\times \R)$ of compact support such that $v^n\rightarrow u^*_j$, since $u^*_j$ is usc.
On some neighborhood $B_n$ of $(T,x_0)$ we have,
\be\label{add3}\forall (t,x)\in B_n,\,\,min\{v^n(t,x)-h_j(x);~~v^n(t,x)-\max\limits_{k\in
A_j}(u^*_k(t,x)-g_{jk}(t,x))\}\geq \epsilon .\ee Let us denote by
$B^n_k := [t_k,T]\times B(x_k,\delta^k_n)$, for some
$\delta^k_n\in]0,1]$ small enough such that $B^n_k\subset B_n$. Since
$u^*_j$ is of polynomial growth, there exists $c>0$, such that
$|u^*_j|\leq c$ on $B_n$. We can then assume $v^n\geq-2c$ on $B_n$.
Define $$
V^n_k(t,x):=v^n(t,x)+\frac{4c|x-x_k|^2}{{\delta^n_k}^2}+\sqrt{T-t}$$
Note that $V^n_k(t,x)\geq v^n(t,x)$ and \be\label{add2}(u^*_j-V^n_k)(t,x)\leq-c
~~\forall (t,x)\in [t_k,T]\times \partial B(x_k,\delta^n_k).\ee On
the other hand, an easy calculation yields 
$$\begin{array}{ll}
&-\{\partial_t V^n_k(t,x)+{\cal L}V^n_k(t,x)\}
\\
&\qq=-\Big\{\partial_t v^n(t,x)+\partial_t((T-t)^{\frac{1}{2}})+\{\E(L_1)\sigma(t,x)+b(t,x)\}\{\partial_x v^n(t,x)+\frac{8c(x-x_k)}{(\delta^n_k)^2}\}\\
&\qq \qq+\frac{1}{2}\sigma(t,x)^2\varpi^2(D_{xx}^2v^n(t,x)+\frac{8c}{(\delta^n_k)^2})+\int_\R[v^n(t,x+\sigma(t,x)y)-v^n(t,x)-\partial_x
v^n(t,x)\sigma(t,x)y]\Pi(dy)\\
&\qq \qq+\int_\R[\frac{4c|x-x_k+\sigma(t,x)y|^2}{(\delta^n_k)^2}-\frac{4c|x-x_k|^2}{(\delta^n_k)^2}-\frac{8c(x-x_k)}{(\delta^n_k)^2}\sigma(t,x)y]\Pi(dy)\Big \}.
\end{array}$$
Note that
 $\Phi(x):=\frac{4c|x-x_k|^2}{(\delta^n_k)^2}\in C^2\cap\Pi_g$ and $v^n\in \cC^{1,2}$ and of compact support, then the two non-local terms are
 bounded and $\partial_t v^n$, $\partial_x v^n$, $D_{xx}^2
v^n$ are so. Since
$\partial_t(\sqrt{T-t})\rightarrow -\infty$, when $t\rw T$, then we can choose $t_k$
large enough in front of $\delta_k$ and the derivatives of $v^n$ to
ensure that \be\label{add14}-(\partial_t V^n_k(t,x)+{\cal
L}V^n_k(t,x))\geq 0, ~~ \forall(t,x)\in B^k_n .\ee Consider now the
stopping time $\theta^k_n:=inf\{s\geq t_k, (s,X^{t_k,x_k}_s)\in
{B^k_n}^c\}\wedge T$, where ${B^k_n}^c$ is the complement of
$B^k_n$ and $\theta_k:= inf\{s\geq
t_k,u_j(s,X^{t_k,x_k}_s)=\max\limits_{l\in
A_j}(u_l(s,X^{t_k,x_k}_s)-g_{jl}(s,X^{t_k,x_k}_s))\}\wedge T$.
Applying It\^o's formula with $V^n_k(t,x)$ on
$[t_k,\theta^k_n\wedge\theta_k]$ yields:
\be\label{eqvnk}\begin{array}{ll}
V^n_k(t_k,x_k)&=V^n_k(\theta^k_n\wedge\theta_k,X^{t_k,x_k}_{\theta^k_n\wedge\theta_k})-\int^{\theta^k_n\wedge\theta_k}_{t_k}[b(r,X^{t_k,x_k}_{r})\partial_x V^n_k(r,X^{t_k,x_k}_r)+\partial_t V^n_k(t,x)(r,X^{t_k,x_k}_r)]dr\\\\
&-\int^{\theta^k_n\wedge\theta_k}_{t_k}
\sigma(r,X^{t_k,x_k}_{r_-})
\partial_x V^n_k(r,X^{t_k,x_k}_{r_-})dL_r-\frac{1}{2}\int^{\theta^k_n\wedge\theta_k}_{t_k}
\sigma^2(r,X^{t_k,x_k}_{r})\varpi^2
\partial^2_{xx} V^n_k
(r,X^{t_k,x_k}_r)dr\\\\
&-\sum\limits_{t_k<
r\leq\theta^k_n\wedge\theta_k}\{V^n_k(r,X^{t_k,x_k}_r)-V^n_k(r,X^{t_k,x_k}_{r_-})-
\sigma(r,X^{t_k,x_k}_{r_-})\partial_x V^n_k(r,X^{t_k,x_k}_{r_-})\dt L_r\}.
\end{array}\ee
Next let us deal with the last term of (\ref{eqvnk}) and let us set
$$h(s,y)=V^n_k(s,X^{t_k,x_k}_{s-}+\sigma(s,X^{t_k.x_k}_{s-})y)-V^n_k(s,X^{t_k,x_k}_{s-})-\partial_x V^n_k(s,X^{t_k.x_k}_{s-})
\sigma(s,X^{t_k.x_k}_{s-})y.$$
By the mean value
theorem we have
$$
h(s,y)=\frac{1}{2}\partial^2_{xx}v^n(s,X^{t_k,x_k}_{s-}+\bar X \sigma (s,X^{t_k.x_k}_{s-})y)(\sigma (s,X^{t_k.x_k}_{s-})y)^2+
\frac{4c}{{\delta^n_k}^2}(\sigma (s,X^{t_k.x_k}_{s-})y)^2
$$
where $\bar X$ is a stochastic processes which is valued in $(0,1)$. As $v^n$ is of compact support and $\sigma$ is bounded then
$$\bal
\E[\int_0^T\!\!\int_\R|h(s,y)|\Pi(dy)ds]<\infty.\ea
$$
It follows that
$$
\E[\sum\limits_{t_k<r\leq\theta^k_n\wedge\theta_k}\{V^n_k(r,X^{t_k,x_k}_r)-V^n_k(r,X^{t_k,x_k}_{r_-})-\sigma(r,X^{t_k,x_k}_{r_-})\partial_x V^n_k(r,X^{t_k,x_k}_{r_-})\dt L_r\}]=
\E[\int_{t_k}^{\theta^k_n\wedge\theta_k}
\int_\R h(s,y)\Pi(dy)ds]<\infty.
$$
Next going back to (\ref{eqvnk}), taking expectation and taking into account of (\ref{add14}), (\ref{add2}) and 
(\ref{add3}) to obtain
\begin{align*}
V^n_k(t_k,x_k)&=\E[V^n_k(\theta^k_n\wedge\theta_k,X^{t_k,x_k}_{\theta^k_n\wedge\theta_k})-\int^{\theta^k_n\wedge\theta_k}_{t_k}(\partial_t V^n_k(r,X^{t_k,x_k}_r)+{\cal L}V^n_k(r,X^{t_k,x_k}_{r}))dr]\\
&\geq \E[V^n_k(\theta^k_n,X^{t_k,x_k}_{\theta^k_n})\mathbbm{1}_{\{\theta^k_n\leq\theta_k\}}+V^n_k(\theta_k,X^{t_k,x_k}_{\theta_k})\mathbbm{1}_{\{\theta^k_n>\theta_k\}}]\\
&=\E[\{V^n_k(\theta^k_n,X^{t_k,x_k}_{\theta^k_n})\mathbbm{1}_{\{\theta^k_n< T\}}+V^n_k(T,X^{t_k,x_k}_T)\mathbbm{1}_{\{\theta^k_n=T\}}\}\mathbbm{1}_{\{\theta^k_n\leq\theta_k\}}+V^n_k(\theta_k,X^{t_k,x_k}_{\theta_k})\mathbbm{1}_{\{\theta^k_n>\theta_k\}}]\\
&\geq \E[\{(u^*_j(\theta^k_n,X^{t_k,x_k}_{\theta^k_n})+c)\mathbbm{1}_{\{\theta^k_n<T\}}+(\epsilon+h_j(X^{t_k,x_k}_T)) \mathbbm{1}_{\{\theta^k_n=T\}}\}\mathbbm{1}_{\{\theta^k_n\leq\theta_k\}}\\
&\qq +\{\epsilon +\max\limits_{k\in A_j}(u^*_k(\theta_k,X^{t_k,x_k}_{\theta_k})-g_{jk}(\theta_k,X^{t_k,x_k}_{\theta_k}))\}\mathbbm{1}_{\{\theta^k_n>\theta_k\}}]\\
&\geq \E[u_j(\theta^k_n\wedge\theta_k,X^{t_k,x_k}_{\theta^k_n\wedge\theta_k})]+c\wedge\epsilon\\
&=\E[u_j(t_k,x_k)-\int^{\theta^k_n\wedge\theta_k}_{t_k}f_j(s,X^{t_k,x_k}_s,(u_l(s,X^{t_k,x_k}_s))_{l=1,m}ds]+c\wedge\epsilon
\end{align*}
since the processes $(Y^j=u_j(.,X.))_{j\in A}$ stopped at time
$\theta^k_n\wedge\theta_k$ solves an explicit RBSDE system with
triple of data given by $((f_j)_{j\in A}, (h_j)_{j\in A},
(g_{i,j})_{i,j\in A})$. In addition, $dK^{j,t,x}=0$ on
$[t_k,\theta_k]$. On the other hand,
$(u_j)_{j\in A}\in \Pi_g$ and then taking into account (\ref{sde1}) and
Assumption (A4)(I)(iii), we deduce that
$$\lim\limits_{k\rightarrow\infty}E[\int^{\theta^k_n\wedge\theta_k}_{t_k}f_j(s,X^{t_k,x_k}_s,(u_l(s,X^{t_k,x_k}_s))_{l=1,m})ds]=0.$$
Taking the limit in the previous inequalities yields: \begin{align*}
&\lim\limits_{k\rightarrow\infty}
V^n_k(t_k,x_k)=\lim
\limits_{k\rightarrow\infty}\{v^n(t_k,x_k)+\sqrt{T-t_k}\}=v^n(T,x_0)\\
&\qq\qq\geq\lim\limits_{k\rightarrow\infty}
u_j(t_k,x_k)+c\wedge\epsilon=u^*_j(T,x_0)+
c\wedge\epsilon.
\end{align*}
As $v^n\rightarrow u^*_j$ pointwisely, then we get a contradiction, when taking the limit in the previous inequalities, and the result follows, i.e., $\forall x\in \R$, $\forall j\in
A$,$$min\{u^*_j(T,x)-h_j(x); ~~u^*_j(T,x)-\max\limits_{l\in
A_j}(u^*_l(T,x)-g_{jl}(T,x))\}=0.$$ Finally the proof of $u^*_j(T,x)=h_j(x), \forall j\in A$, is obtained in the same way as in (\cite{saidmorlais}, pp.180) since the function $(g_{ij})_{i,j\in A}$ verify the non free loop property (A4)(II). \qed
\ms

Now let us  show $(u^*_j)_{j\in A}$ is a subsolution of (\ref{FIPDE}). First note that since $u^n_j\nearrow u_j$ and $u^n_j$ is
continuous, we have
$$u^*_j(t,x)=\limsup_{n\rightarrow\infty}\!\!{}^*
u^n_j(t,x)=\varlimsup\limits_{n\rightarrow\infty , t'\rightarrow
t,x'\rightarrow x}u^n_j(t',x').$$ Besides $\forall j\in A$ and
$n\geq 0$ we deduce from the construction of $u^n_j$  that $$u^n_j(t,x)\geq\max\limits_{l\in A_j}(u^{n-1}_l(t,x)-g_{jl}(t,x))$$ and by taking  the limit in $n$ we obtain: $\forall j\in A$, $\forall x\in \R$,$$u^*_j(t,x)\geq
\max\limits_{l\in A_j}(u^*_l(t,x)-g_{jl}(t,x)).$$ Next fix $j\in
A$. Let $(t,x)\in(0,T)\times \R$ be such that \be\label{14}u^*_j(t,x)-\max\limits_{l\in
A_j}(u^*_l(t,x)-g_{jl}(t,x))>0.\ee
We are going to use once more Definition \ref{defvisco}. Let $(t,x)\in (0,T)\times \R^k$ and $\phi$ be a function of $\ccp$ such that $u^*_j-\phi$ has a global maximum at $(t,x)$ on $\esp$ which wlog we assume strict and verifying $u_j^*(t,x)=\phi(t,x)$. Then there exist subsequences $\{n_k\}$ and $((t'_{n_k},x'_{n_k}))_k$ such that
$$
((t'_{n_k},x'_{n_k}))_k\rw_k (t,x) \mbox{ and }u_j^{n_k}(t'_{n_k},x'_{n_k})\rw_k u_j^*(t,x).
$$
Let now 
$\d>0$ and $(t_{n_k},x_{n_k})$ be the global maximum of $u_j^{n_k}-\phi$ on $[0,T]\times B'(x,2\d C_\sigma )$. Therefore
$$
(t_{n_k},x_{n_k})\rw_k(t,x) \mbox{ and }u_j^{n_k}(t_{n_k},x_{n_k})\rw_k u_j^*(t,x).$$
Actually let us consider a convergent subsequent of $(t_{n_k},x_{n_k})$, which we still denote by $(t_{n_k},x_{n_k})$, and let $(\bar t,\bar x)$ be its limit. Then for some $k_0$ and for $k\geq k_0$ we have
\be\label{eqtnxnsub}
u_j^{n_k}(t_{n_k},x_{n_k})-\phi(t_{n_k},x_{n_k})\geq u_j^{n_k}(t'_{n_k},x'_{n_k})-\phi(t'_{n_k},x'_{n_k}).
\ee
Taking the limit wrt $k$ to obtain
$$
u_j^*(\bar t,\bar x)-\phi(\bar t,\bar x)\geq u_j^*(t,x)-\phi(t,x).
$$As the maximum $(t,x)$ of $u_j^*-\phi$ on $\esp$ is strict then $(\bar t,\bar x)=(t,x)$. It follows that the sequence $((t_{n_k},x_{n_k}))_k$ converges to $(t,x)$. Going back now to (\ref{eqtnxnsub}) and taking the limit wrt $k$ we obtain
$$
u_j^*(t,x)\geq \limsup_ku_j^{n_k}(t_{n_k},x_{n_k})\geq
\liminf_ku_j^{n_k}(t_{n_k},x_{n_k})\geq
\liminf_ku_j^{n_k}(t'_{n_k},x'_{n_k})= u_j^*(t,x)
$$
which implies that $u_j^{n_k}(t_{n_k},x_{n_k})\rw u_j^*(t,x)$ as $k\rw \infty$.
\ms

\nd Now for $k$ large enough,

(i) $(t_{n_k},x_{n_k})\in (0,T)\times B(x,2\d C_\sigma)$ and is the global maximum of $u^{n_k}_j-\phi$ in
$[0,T]\times B(x_{n_k},C_\sigma\delta)$ ;

(ii) $u^{n_k}_j(t_{n_k},x_{n_k})>\max\limits_{l\in
A_j}(u^{{n_k}-1}_l(t_{n_k},x_{n_k})-g_{jl}(t_{n_k},x_{n_k}))$.
\ms

\nd As $u_j^{n_k}$ is a subsolution of (\ref{sq2}), then by Definition \ref{defviscoloc22} in Appendix we have
\be\label{eqlim2}\begin{array}{l}
 -\partial_t\phi(t_{n_k},x_{n_k})-{\cal L}^1
    \phi(t_{n_k},x_{n_k})\le \I^{1,\delta}(t_{n_k},x_{n_k},\phi)+
    \I^{2,\delta}(t_{n_k},x_{n_k},D_x\phi(t_{n_k},x_{n_k}),
    u_j^{{n_k}})+ \\f_j(t_{n_k},x_{n_k},u_1^{{n_k}-1}(t_{n_k},x_{n_k}),\cdots,u_{j-1}^{n_k-1}(t_{n_k},x_{n_k}),u_j^{{n_k}}(t_{n_k},x_{n_k}),u_{j+1}^{{n_k}-1}(t_{n_k},x_{n_k}),\cdots,u_m^{{n_k}-1}(t_{n_k},x_{n_k})).\end{array}
\ee
But there exists a subsequence of $\{n_k\}$ (which we still denote by $\{n_k\}$) such that:

(i) for any $l\in A_j$, $(u_l^{n_k-1}(t_{n_k},x_{n_k}))_k$ is convergent and then $\lim_ku_l^{{n_k}-1}(t_{n_k},x_{n_k})\leq u_l^*(t,x)$ ;

(ii) $(\I^{1,\delta}(t_{n_k},x_{n_k},\phi))_{n_k}\rw_k
\I^{1,\delta}(t,x,\phi)$ ; 

(iii) 
 $$\limsup_k\I^{2,\delta}(t_{n_k},x_{n_k},D_x\phi(t_{n_k},x_{n_k}),u_j^{{n_k}})\leq \I^{2,\delta}(t,x,D_x\phi(t,x),u_j^*).$$Point (i) is due to the fact that $u_l^n$ belongs uniformly to $\Pi_g$ ; (ii) is just the Lebesgue dominated convergence Theorem ; (iii) stems from an adaptation of Fatou's Lemma, definition of $u_j^*$ and finally monotonicity of $\I^{2,\d}$. 

Going back now to (\ref{eqlim2}) and taking the limit superior wrt $k$ (through the previous subsequence), using the fact that $f_j$ is continuous and verifies (A4)(I)(v) to obtain
$$\begin{array}{l}
 -\partial_t\phi(t,x)-{\cal L}^1
    \phi(t,x)\\
   \qq \leq \I^{1,\delta}(t,x,\phi)+\I^{2,\delta}(t,x,D_x\phi(t,x),u_{j}^*)+f_j(t,x,u^*_1(t,x),\cdots,u_{j-1}^*(t,x),u_{j}^*(t,x),u_{j+1}^*(t,x),\cdots,u^*_{m}(t,x))
\\ \qq \leq \I(t,x,D_x\phi(t,x),\phi)+f_j(t,x,u^*_1(t,x),\cdots,u_{j-1}^*(t,x),u_{j}^*(t,x),u_{j+1}^*(t,x),\cdots,u^*_{m}(t,x)).
   \end{array}
$$ This last inequality is due to that fact that $u_j^*\leq \phi$ and since $\I^{1,\d}+\I^{2,\d}=\I$. Finally combining it with (\ref{14}) we obtain that $u_j$ is a viscosity subsolution of
$$\left\{
    \begin{array}{ll}
      min\{u_j(t,x)-\max\limits_{k\in A_j}(u_k^*(t,x)-g_{jk}(t,x));\\\qq\qq
      -\partial_t u_j(t,x)-{\cal L} u_j(t,x)-f_j(t,x,u_1^*(t,x),\cdots,
u_{j-1}^*(t,x),u_j(t,x), 
u_{j+1}^*(t,x),\dots, u_{m}^*(t,x))\}=0;\\
      u_j(T,x)=h_j(x).
    \end{array}
    \right.$$
As $j$ is arbitrary then $(u_j)_{j\in A}$ is a viscosity subsolution of (\ref{FIPDE}).
\end{proof}
\subsection{Uniqueness of the viscosity solution of system (\ref{FIPDE})}
We now give a comparison result of subsolution and supersolution of system (\ref{FIPDE}),
from which we get the continuity and uniqueness of its solution.

\begin{propo}\label{uniq}Assume Assumptions {\bf (A4)} fulfilled. Let $(u_j)_{j\in A}$ (resp. $(w_j)_{j\in A}$) be a subsolution (resp. supersolution) of $(\ref{FIPDE})$ which belongs to $\Pi_g$. Then for any $j\in A$,
$$\forall (t,x)\in [0,T]\times \R,~~u_j(t,x)\leq w_j(t,x)$$\end{propo}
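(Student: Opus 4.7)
The plan is a classical doubling-of-variables argument adapted to the non-local setting, in the spirit of Jensen--Ishii's Lemma for IPDEs \cite{barles,Bis}, combined with the non free loop property (A4)(II) which is used to single out the mode at which the obstacle is inactive.

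\textbf{Step 1 (Reduction to the strict case).} Suppose for contradiction that $M:=\max_{j\in A}\sup_{(t,x)\in\esp}(u_j(t,x)-w_j(t,x))>0$. Using Lemma \ref{modifsursol} and Remark \ref{modifsub}, for any $\theta>0$ I replace $u_j$ by $u_j^\theta(t,x):=u_j(t,x)-\theta e^{-\lambda t}(1+|x|^{2\gamma+2})$ and $w_j$ by $w_j^\theta(t,x):=w_j(t,x)+\theta e^{-\lambda t}(1+|x|^{2\gamma+2})$, which are still a subsolution and a supersolution respectively for $\lambda\ge\lambda_0$. This modification serves two purposes: first, the PDE inequalities now hold \emph{strictly} (with a positive gap depending on $\theta,\lambda$); second, because $u_j,w_j\in\Pi_g$, the function $u_j^\theta-w_j^\theta$ tends to $-\infty$ as $|x|\to\infty$, so suprema are attained. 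For $\theta$ small enough one still has $M^\theta:=\max_{j\in A}\sup(u_j^\theta-w_j^\theta)>0$.

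\textbf{Step 2 (Selection of the active mode via non free loop).} Pick $i_1\in A$ and $(\bar t,\bar x)\in\esp$ realizing $M^\theta$. Note $\bar t<T$ since on $\{T\}\times\R$ one has $u_j^\theta\le h_j\le w_j^\theta$ with strict gap. If $u_{i_1}^\theta(\bar t,\bar x)=u_{i_2}^\theta(\bar t,\bar x)-g_{i_1 i_2}(\bar t,\bar x)$ for some $i_2\neq i_1$, then from $w_{i_1}^\theta(\bar t,\bar x)\ge w_{i_2}^\theta(\bar t,\bar x)-g_{i_1 i_2}(\bar t,\bar x)$ one deduces $u_{i_2}^\theta(\bar t,\bar x)-w_{i_2}^\theta(\bar t,\bar x)\ge M^\theta$, so the maximum is again realized at the same point in mode $i_2$, and again one can try the obstacle. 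Iterating produces a loop $i_1,i_2,\dots,i_p=i_1$ with $g_{i_1i_2}(\bar t,\bar x)+\dots+g_{i_{p-1}i_p}(\bar t,\bar x)=0$, contradicting (A4)(II). Hence there is a mode, call it $i^*$, with
$$u_{i^*}^\theta(\bar t,\bar x)>\max_{k\in A_{i^*}}\bigl(u_k^\theta(\bar t,\bar x)-g_{i^*k}(\bar t,\bar x)\bigr),$$
so the subsolution inequality for $u_{i^*}^\theta$ is the PDE part (not the obstacle part).

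\textbf{Step 3 (Variable doubling and jets).} Set $i:=i^*$ and double the variables by introducing, for $\epsilon>0$,
$$\Psi_\epsilon(t,x,y):=u_i^\theta(t,x)-w_i^\theta(t,y)-\tfrac{1}{\epsilon}|x-y|^2.$$
Because $u_i^\theta-w_i^\theta\to-\infty$ at infinity, $\Psi_\epsilon$ attains its max at some $(t_\epsilon,x_\epsilon,y_\epsilon)$. Standard arguments give $(t_\epsilon,x_\epsilon),(t_\epsilon,y_\epsilon)\to(\bar t,\bar x)$, $|x_\epsilon-y_\epsilon|^2/\epsilon\to 0$, and by continuity of the obstacle selection in Step 2 one still has the strict gap
$$u_i^\theta(t_\epsilon,x_\epsilon)-\max_{k\neq i}\bigl(u_k^\theta(t_\epsilon,x_\epsilon)-g_{ik}(t_\epsilon,x_\epsilon)\bigr)>0$$
for $\epsilon$ small enough, so the subsolution inequality is the PDE one at $(t_\epsilon,x_\epsilon)$. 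I now apply the non-local version of Jensen--Ishii's Lemma (see \cite{barles,Bis}): for any $\delta>0$ one gets real numbers $a,b$ and second-order coefficients $X,Y$ with $a-b=0$ and $X\le Y$, together with a bound on the small-jump non-local term $\cI^{1,\delta}$ via the test functions $\pm|x-y|^2/\epsilon$, while the big-jump term $\cI^{2,\delta}$ is evaluated directly on $u_i^\theta$ and $w_i^\theta$ using the monotonicity of $\cI^{2,\delta}$ in the unknown and the fact that $u_i^\theta(t_\epsilon,\cdot)-w_i^\theta(t_\epsilon,\cdot)$ has a global max at its diagonal shift.

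\textbf{Step 4 (Getting the contradiction).} Subtracting the two viscosity inequalities at the doubled point, the linear parts $\cL^1$ and the small-jump parts $\cI^{1,\delta}$ contribute $O(|x_\epsilon-y_\epsilon|^2/\epsilon+\delta)$ thanks to the Lipschitz and boundedness assumptions on $b,\sigma$ and (\ref{cdtint})-(\ref{2cdtint}). The big-jump non-local part gives a non-positive contribution by the argument just mentioned. The remaining term is
$$f_i\bigl(t_\epsilon,x_\epsilon,(u_k^\theta(t_\epsilon,x_\epsilon))_{k\in A}\bigr)-f_i\bigl(t_\epsilon,y_\epsilon,(w_k^\theta(t_\epsilon,y_\epsilon))_{k\in A}\bigr),$$
plus the strictly positive gap $\gamma(\theta,\lambda)>0$ coming from Step 1. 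Using the Lipschitz continuity in $(x,\vy)$ of $f_i$, the continuity in $(t,x)$, and above all the monotonicity (A4)(I)(v) together with the inequality $u_k^\theta(t_\epsilon,x_\epsilon)-w_k^\theta(t_\epsilon,y_\epsilon)\le u_i^\theta(t_\epsilon,x_\epsilon)-w_i^\theta(t_\epsilon,y_\epsilon)+o(1)$ (which follows from $(t_\epsilon,x_\epsilon,y_\epsilon)$ being near the global max of $\Psi_\epsilon$ in mode $i$ and the definition of $M^\theta$), this $f_i$-difference is controlled by $C(u_i^\theta(t_\epsilon,x_\epsilon)-w_i^\theta(t_\epsilon,y_\epsilon))+o_\epsilon(1)$. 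Sending $\epsilon\to 0$ then $\delta\to 0$, and finally choosing $\lambda$ large enough so that $\gamma(\theta,\lambda)$ dominates $CM^\theta$, one obtains $0<\gamma-CM^\theta\le 0$, the desired contradiction. Letting $\theta\to 0$ concludes $u_j\le w_j$ for every $j\in A$.

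\textbf{Main obstacle.} The delicate point is not the doubling procedure itself, but rather coupling it correctly with the switching structure: one must ensure that the index $i^*$ chosen at $(\bar t,\bar x)$ by the non free loop argument remains ``PDE-active'' at the perturbed point $(t_\epsilon,x_\epsilon)$, and that the monotonicity assumption (A4)(I)(v) lets one absorb all the off-diagonal $f_i$-differences in $u_k-w_k$ into the diagonal one $u_i-w_i$ via the global inequality $u_k-w_k\le M^\theta= u_i^\theta(\bar t,\bar x)-w_i^\theta(\bar t,\bar x)$. Controlling the big-jump non-local term $\cI^{2,\delta}$, which involves the values of $u_i^\theta$ and $w_i^\theta$ at points $x_\epsilon+\sigma y$, $y_\epsilon+\sigma y$ possibly far from the maximizer, is the other technical point and requires the monotonicity of $\cI^{2,\delta}$ in the unknown combined with the global maximality of the penalization.
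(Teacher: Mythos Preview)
Your overall architecture is the same as the paper's: penalize at infinity via Lemma \ref{modifsursol}/Remark \ref{modifsub}, use the non free loop property to find a mode where the subsolution obstacle is inactive, double variables, invoke the non-local Jensen--Ishii lemma, and subtract. Steps 1--3 and the handling of the big-jump term are essentially right.

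The genuine gap is in Step 4. You claim that the $\theta$-perturbation produces a ``strict gap'' $\gamma(\theta,\lambda)$ in the PDE inequalities which can then be made to dominate $CM^\theta$ by taking $\lambda$ large. But the margin coming from the proof of Lemma \ref{modifsursol} has the form $\theta e^{-\lambda \bar t}(\lambda-C_0)(1+|\bar x|^{2\gamma+2})$ at the maximizer $(\bar t,\bar x)$. For $\bar t>0$ fixed, $\lambda e^{-\lambda\bar t}\to 0$ as $\lambda\to\infty$, so this gap stays bounded (in fact tends to $0$) while $M^\theta\to M>0$. There is no choice of $\lambda$ that forces $\gamma(\theta,\lambda)>CM^\theta$. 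In other words, the $\theta$-perturbation gives you compactness (suprema are attained), but it does \emph{not} by itself generate the strict term needed to close the argument.

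The paper obtains the missing strictness differently: it first performs the exponential change of variable $\tilde u_j=e^{\lambda t}u_j$, $\tilde w_j=e^{\lambda t}w_j$, which transforms the driver into $F_j(t,x,\vec y)=-\lambda y_j+e^{\lambda t}f_j(t,x,e^{-\lambda t}\vec y)$. For $\lambda$ large this $F_j$ is strictly decreasing in its own component $y_j$ with slope $\le -\lambda+C_j$, i.e.\ condition (\ref{3.3}) holds with a coefficient $\lambda$ that can be taken arbitrarily large. After subtracting the viscosity inequalities and using (A4)(I)(v) for the off-diagonal components together with $j\in\underline A$, one arrives at $-\lambda(u_j(\bar t,\bar x)-w_j(\bar t,\bar x))\le (m-1)C_j(u_j(\bar t,\bar x)-w_j(\bar t,\bar x))$, which is the contradiction since $-\lambda>(m-1)C_j$. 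So the large parameter enters multiplicatively in front of $u_j-w_j$, not as an additive gap multiplied by $e^{-\lambda t}$; that is what makes the contradiction work.

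A secondary point: your doubling $\Psi_\epsilon(t,x,y)=u_i^\theta(t,x)-w_i^\theta(t,y)-\epsilon^{-1}|x-y|^2$ lacks the localizing terms $|t-\bar t|^2+\rho|x-\bar x|^4$ that the paper uses. Without them you cannot ensure $(t_\epsilon,x_\epsilon,y_\epsilon)\to(\bar t,\bar x,\bar x)$, and the extra $\rho|x-\bar x|^4$ is also what absorbs the residual contribution of the large-jump term $\I^{2,\delta}$ (see the estimate of $\I^{2,\delta}(t_0,x_0,D_x\psi_\rho,\psi_\rho)$ in the paper) before one sends $\rho\to 0$ at the very end.
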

\begin{proof} Let $\gamma$ be a real constant such that for any $j\in A$ and $(t,x)\in \esp$,
$$
|u_j(t,x)|+|w_j(t,x)|\leq C(1+|x|^\gamma).
$$
To begin with we additionally assume the existence of a constant $\lambda$ such that $\lambda<-m.\max\limits_{j\in A}\{C_j\}$ ($C_j$ is the Lipschitz constant of $f_j$ w.r.t $\overrightarrow{y}$) and for any $j\in A$ and any $t,x, y_1,\cdots,y_{j-1},y_{j+1},\cdots, y_m$, $y\geq y', $
\be\label{3.3}f_j(t,x,y_1,\cdots,y_{j-1},y,y_{j+1},\cdots,y_m)-f_j(t,x,y_1,\cdots,y_{j-1},y',y_{j+1}\cdots,y_m)\leq\lambda(y-y').\ee
Thanks to Lemma 4.1 and Remark \ref{modifsub}, we know there exists $\nu$ large enough such
that for any $\theta>0$, $w_{j,\theta,\nu}(t,x)=w_j(t,x)+\theta
e^{-\nu t}(1+|x|^{2\gamma+2})$ (resp. $u_{j,\theta,\nu}(t,x)=u_j(t,x)-\theta
e^{-\nu t}(1+|x|^{2\gamma+2})$) is a supersolution (resp. subsolution). So it is enough to show
that
$$\forall j\in A,\,\,\forall (t,x)\in[0,T]\times \R,~~u_{j,\theta,\nu}(t,x)\leq w_{j,\theta,\nu}(t,x),$$ then taking limits as $\theta\rightarrow 0$, the result follows. By the growth condition there exists a constant $C>0$ such that
\be\label{8101}\forall j\in A, \forall (t,x)\in [0,T]\times \R, ~~s.t.~ |x|\geq
C,~u_{j,\theta,\nu}(t,x)<0<w_{j,\theta,\nu}(t,x).\ee Finally for the sake of simplicity we merely denote $u_{j,\theta,\nu}$ (resp. $w_{j,\theta,\nu}$) by $u_j$ (resp. $w_j$).

To obtain the comparison result, we proceed by contradiction assuming
that $$\exists (t_1,x_1)\in [0,T]\times \R,\mbox{ such that }\max\limits_{j\in
A}(u_j(t_1,x_1)-w_j(t_1,x_1))>0.$$ Taking into account the values of the
subsolution and the supersolution at $T$, there exist
$(\bar{t},\bar{x})\in [0,T[\times B(0,C)$ (wlog we assume that $\bar t>0$), such that :
\begin{align*}
0&<\max\limits_{(t,x)\in [0,T]\times \R}\max\limits_{j\in A}(u_j(t,x)-w_j(t,x))\\
&\qq\qq =\max\limits_{(t,x)\in[0,T[\times B(0,C)}\max\limits_{j\in A}(u_j(t,x)-w_j(t,x))=\max\limits_{j\in A}(u_j(\bar{t},\bar{x})-w_j(\bar{t},\bar{x})).
\end{align*}
We now define the set $\underbar{A}$ as follows:
\be\label{uniquea}\underbar{A}:=\{j\in A,
u_j(\bar{t},\bar{x})-w_j(\bar{t},\bar{x})=\max\limits_{k\in
A}(u_k(\bar{t},\bar{x})-w_k(\bar{t},\bar{x}))\}.\ee By the
assumption {\bf (A4)}(II), using the same argument as in (\cite{saidmorlais}, pp. 171), we can prove that there exists $j\in\underbar{A}$ such that, \be\label{36}u_j(\bar{t},\bar{x})>\max\limits_{k\in
A_j}(u_k(\bar{t},\bar{x})-g_{jk}(\bar{t},\bar{x})).\ee Let us now take such a $j\in \underbar{A}$. For $\varepsilon>0$ and  $\rho>0$,  let us define
$$\Phi^j_{\varepsilon,\rho}(t,x,y):=u_j(t,x)-w_j(t,y)-\frac{|x-y|^2}{\varepsilon}-|t-\bar{t}|^2-\rho|x-\bar{x}|^{4}.$$  
By (\ref{8101}) and since $\lim_{|y|\rw \infty} w_j(t,y)=\infty$, $\lim_{|x|\rw \nf}u_j(t,x)=-\infty$, there exists a constant $C'$ such that for any $t\in [0,T]$, $u_j(t,x)-w_j(t,y)<0$ for any $|x|\geq C'$ or $|y|\geq C'$. It follows that for any $\varepsilon>0$ and $\rho>0$, there exists $(t_0,x_0,y_0)$ such that
$$\Phi^j_{\varepsilon,\rho}(t_0,x_0,y_0)=\max\limits_{(t,x,y)\in [0,T]\times
B'(0,C')^2}
\Phi^j_{\varepsilon,\rho}(t,x,y)=\max\limits_{(t,x,y)\in [0,T]\times
\R^2}\Phi^j_{\varepsilon,\rho}(t,x,y).$$ Note that the maximum exists since
$\Phi^j_{\varepsilon,\rho}$ is $usc$ and $B'(0,C')^2$ is the closure of
$B(0,C')^2$.
On the other hand let us point out that $(t_0,x_0,y_0)$ depends actually on $\varepsilon$ and $\rho$ which we omit for sake of simplicity.  We then have,
\be\label{unique83}
\begin{aligned}
\Phi^j_{\varepsilon,\rho}(\bar t,\bar x,\bar x)&=u_j(\bar{t},\bar{x})-w_j(\bar{t},\bar{x})\\&\leq u_j(\bar t,\bar x)-w_j(\bar t,\bar x)+\frac{|x_0-y_0|^2}{\varepsilon}+
|t_0-\bar{t}|^2+\rho|x_0-\bar{x}|^{4}\\
&\leq u_j({t}_0,{x}_0)-w_j({t}_0,{y}_0).\\
\end{aligned}\ee The  growth condition of $u_j$ and $w_j$ implies that $\varepsilon^{-1}|x_0-y_0|^2+
|t_0-\bar{t}|^2+\rho|x_0-\bar{x}|^{4}$ is bounded and then $\lim\limits_{\varepsilon\rightarrow 0}(x_0-y_0)=0$. Next by (\ref{unique83}), for any
subsequence
$(t_{0_l},x_{0_l},y_{0_l})_l$ which
converges to $(\tilde{t},\tilde{x},\tilde{x})$,
$$u_j(\bar{t},\bar{x})-w_j(\bar{t},\
\bar{x})\leq u_j(\tilde{t},\tilde{x})-w_j(\tilde{t},\tilde{x}),$$
since $u_j$ is $usc$ and $w_j$ is $lsc$. By the definition of
$(\bar{t},\bar{x})$ this last inequality is an equality. Using both
the definiton of $\Phi^j_{\varepsilon,\rho}$ and (\ref{unique83}), it
implies that the sequence
\be\label{unique6}\lim\limits_{\varepsilon\rightarrow
0}(t_0,x_0,y_0)=(\bar{t},\bar{x},\bar{x})\ee
and once more from (\ref{unique83}) we deduce
\be\label{unique3171}
\lim\limits_{\varepsilon\rightarrow
0}\varepsilon^{-1}|x_0-y_0|^2=0. \ee
Finally classically (see e.g. \cite{saidmorlais}, pp. 173) we have also
\be\label{unique85} \lim\limits_{\varepsilon \rw 0}
(u_j(t_0,x_0),w_j(t_0,y_0))=(u_j(\bar{t},\bar{x}),w_j(\bar{t},\bar{x})).\ee
Next as the functions $(u_k)_{k\in A}$ are $usc$ and
$(g_{ij})_{i,j\in A}$ are continuous, and since the index $j$
satisfies (\ref{36}), there exists $r>0$ such that for
$(t,x)\in  B((\bar{t},\bar{x}),r)$ we have
$u_j(t,x)>\max\limits_{k\in A_j}(u_k(t,x)-g_{jk}(t,x))$. But by
(\ref{unique85}), (\ref{unique6}) and once more since $u_j$ is $usc$
then there exists $\varepsilon_0$ such that for any $0<\varepsilon <\varepsilon_0$,
we have:$$u_j(t_0,x_0)>\max\limits_{k\in
A_j}(u_k(t_0,x_0)-g_{ij}(t_0,x_0)).$$

Now for $\varepsilon$ small enough,
 we are able to apply Jensen-Ishii's lemma 
for non local operators (see e.g. Barles et al. \cite{barles}, pp.583 or Biswas et al. \cite{Bis}, Lemma 4.1, pp.64) with  $u_j$, $w_j$ and
$\phi(t,x,y):=\frac{|x-y|^2}{\varepsilon}+|t-\bar{t}|^2+\rho|x-\bar{x}|^{4}$ at point
$(t_0,x_0,y_0)$. For any $\d\in (0,1)$ there are
$p^0_u,q^0_u, p^0_w$, $q^0_w$, $M^0_u$ and $M^0_w$ real constants such that:
\ms

(i)
\be\begin{aligned}
p^0_u-p^0_w=\partial_t{\phi}(t_0,x_0,y_0),
~~q^0_u=\partial_x{\phi}(t_0,x_0,y_0),~~q^0_w=-\partial_y{\phi}(t_0,x_0,y_0)\end{aligned}\ee
and
 \be \label{unique7} \left(\begin{matrix}
      M^0_u & 0 \cr
      0 & -M^0_w
      \end{matrix}\right)\leq \frac{4}{\varepsilon}\left(\begin{matrix}
      1 & -1\cr
      -1 & 1
      \end{matrix}\right)+ \left(\begin{matrix}
     12\rho|x_0-\bar{x}|^2 & 0 \cr
      0 & 0
      \end{matrix}\right)\,;\ee\\
\be\label{uniqueinequa1}
\begin{aligned}
\mbox{ (ii)}\,\,&-p^0_u-\{\sigma(t_0,
x_0)\E(L_1)+b(t_0,
x_0)\}q^0_u-\frac{1}{2}\sigma(t_0,x_0)^2
\varpi^2M^0_u-f_j(t_0,x_0,(u_k(t_0,x_0))^m_{k=1})\\
&
-I^{1,\delta}(t_0,x_0,\phi(t_0,.,y_0))-I^{2,\delta}(t_0,x_0,q^0_u,u_j)\leq
0\,;
\end{aligned}
\ee \be\label{uniqueinequa2}
\begin{aligned}
\mbox{ (iii)}\,\,&-p^0_w-\{\sigma(t_0,
y_0)\E(L_1)+b(t_0,
y_0)\}q^0_w-\frac{1}{2}\sigma(t_0,y_0)^2
\varpi^2M^0_w-f_j(t_0,y_0,(w_k(t_0,y_0))^m_{k=1})\\
&
-I^{1,\delta}(t_0,y_0,-\phi(t_0, x_0,.))-I^{2,\delta}(t_0,y_0,q^0_w,w_j)\geq
0.
\end{aligned}\ee
We are now going to provide estimates for the non-local terms. First let us set $\psi_\rho(t,x):=\rho|x-\bar{x}|^4+|t-\bar{t}|^2$. By definition of
$(t_0,x_0,
y_0)$, for any $d,d'\in \R$,
$$u_j(t_0,x_0+d')-w_j(t_0,y_0+d)-\varepsilon^{-1}|x_0+d'-
y_0-d|^2-\psi_\rho(t_0,x_0+d')\leq
u_j(t_0,x_0)-w_j(t_0,y_0)-\varepsilon^{-1}|x_0-
y_0|^2-\psi_\rho(t_0,x_0).$$Therefore for $z\in \R$, in taking $d'=\sigma(t_0,x_0)z$ and
$d=\sigma(t_0,y_0)z$, we obtain
$$\begin{array}{l}u_j(t_0,x_0+
\sigma(t_0,x_0)z)
-u_j(t_0,x_0)-q_u^0\sigma(t_0,x_0)z\\\qq\leq w_j(t_0,y_0+
\sigma(t_0,y_0)z)-
w_j(t_0,y_0)
-q_w^0\sigma(t_0,y_0)z+
\varepsilon^{-1}|\sigma(t_0,x_0)
-\sigma(t_0,y_0)|^2z^2\\
\qq\qq+\psi_\rho(t_0,x_0+\sigma(t_0,x_0)z)-\psi_\rho(t_0,x_0)-D_x\psi_\rho(t_0,x_0)\sigma(t_0,x_0)z.\end{array}$$
It implies that for any $\d>0$,
\be\label{estimlocal2}
I^{2,\delta}(t_0,x_0,q^0_u,u_j)
-I^{2,\delta}(t_0,y_0,
q^0_w,w_j)\leq C\varepsilon^{-1}|x_0-y_0|^2+I^{2,\delta}(t_0,x_0,D_x\psi_\rho(t_0,x_0),\psi_\rho)\ee since $\sigma(t,x)$ is uniformly Lipschitz $\wr$ $x$. But it easy to check that we have 
$$\bal |I^{2,\delta}(t_0,x_0,D_x\psi_\rho(t_0,x_0),\psi_\rho)|\leq C \rho \int_{|z|\geq \d}\{|z|^2+|z|^4\}\Pi(dz).\ea $$ On the other hand, since $\phi\in \cC^2$
$$\begin{array}{ll}
I^{1,\delta}(t_0,x_0,\phi(t_0,.,y_0))&=
\int_{|z|\leq \d}\{\phi(t_0,x_0+
\sigma(t_0,x_0)z,y_0)-
\phi(t_0,x_0,y_0)-D_x\phi
(t_0,x_0,y_0)\sigma(t_0,x_0)z\}\Pi(dz)
\\\\{}&\leq {\sigma(t_0,x_0)^2}\int_{|z|\leq \d}\{\varepsilon^{-1}+C\rho(1+|z|^2)\}|z|^2\Pi(dz),
\end{array}$$
and
$$\begin{array}{ll}
I^{1,\delta}(t_0,y_0,- \phi(t_0,x_0,.))&=
\int_{|z|\leq \d}\{-\phi(t_0,x_0, y_0+
\sigma(t_0,y_0)z)+
\phi(t_0,x_0,y_0)+D_y\phi
(t_0,x_0,y_0)\sigma(t_0,y_0)z\}\Pi(dz)
\\\\{}&=-\varepsilon^{-1}{\sigma(t_0,y_0)^2}\int_{|z|\leq \d}|z|^2d\Pi(z).
\end{array}$$
Therefore we have  \be\label{estimlocal1}\begin{array}{l}
-I^{1,\delta}(t_0,x_0,\phi(t_0,.,y_0))+I^{1,\delta}(t_0,y_0,-
\phi(t_0,x_0,.))\\\qq\qq\geq - {\sigma(t_0,x_0)^2}\int_{|z|\leq \d}\{\varepsilon^{-1}+C\rho(1+|z|^2)\}|z|^2\Pi(dz)-\varepsilon^{-1}{\sigma(t_0,y_0)^2}\int_{|z|\leq \d}|z|^2d\Pi(z).\end{array} \ee Making now the difference between
(\ref{uniqueinequa1}) and (\ref{uniqueinequa2}) yields
\begin{align*}
&-(p^0_u-p^0_w)-[(\sigma(t_0,x_0)\E(L_1)+
b(t_0,x_0))
q^0_u-(\sigma(t_0,
y_0)\E(L_1)+b(t_0,
y_0))q^0_w]-\frac{1}{2}\varpi^2[\sigma(t_0,
x_0)^2
M^0_u\\
&\qq-\sigma(t_0,y_0)^2M^0_w]-[f_j(t_0,x_0,(u_k(t_0,x_0))^m_{k=1})-f_j(t_0,y_0,(w_k(t_0,y_0))^m_{k=1})]\\
&\qq-I^{1,\delta}(t_0,x_0,\phi(t_0,.,y_0))+I^{1,\delta}(t_0,y_0,-
\phi(t_0,x_0,.))
-I^{2,\delta}(t_0,x_0,q^0_u,u_j)
+I^{2,\delta}(t_0,y_0,
q^0_w,w_j)
\leq 0.
\end{align*}Taking now into account (\ref{estimlocal2}) and (\ref{estimlocal1}) we get
\begin{align*}
&-(p^0_u-p^0_w)-[(\sigma(t_0,
x_0)\E(L_1)+b(t_0,
x_0))q^0_u-
(\sigma(t_0,y_0)
\E(L_1)+b(t_0,y_0))
q^0_w]-\frac{1}{2}\varpi^2[\sigma(t_0,
x_0)^2
M^0_u\\
&\qq-\sigma(t_0,y_0)^2M^0_w]-[f_j(t_0,x_0,(u_k(t_0,x_0))^m_{k=1})-f_j(t_0,y_0,(w_k(t_0,y_0))^m_{k=1})]\\
&\qq- {\sigma(t_0,x_0)^2}\int_{|z|\leq \d}\{\varepsilon^{-1}+C\rho(1+|z|^2)\}|z|^2\Pi(dz)-\varepsilon^{-1}{\sigma(t_0,y_0)^2}\int_{|z|\leq \d}|z|^2d\Pi(z)\\&\qq
-C\varepsilon^{-1}|x_0-y_0|^2-I^{2,\delta}(t_0,x_0,D_x\psi_\rho(t_0,x_0),\psi_\rho)
\leq 0. 
\end{align*}Next by using the properties satisfied by $p^0_u,q^0_u, p^0_w$, $q^0_w$, $M^0_u$ and $M^0_w$ and  sending $\d$ to $0$ to obtain the existence of a constant $C_{\varepsilon,\rho}$ such that for any fixed $\rho$ we have $\limsup\limits_{\varepsilon \rw 0}C_{\varepsilon,\rho}\leq 0$ and
 \be \label{u}
-\{f_j(t_0,x_0,(u_k(t_0,x_0))_{k=1}^m)-f_j(t_0,y_0,(w_k(t_0,y_0))_{k=1}^m)\}\leq
C_{\varepsilon,\rho}+\rho C\int_{\R}\{|z|^2+|z|^4\}\Pi(dz).\ee
Next since $f_j$ is Lipschitz $\wr$ $(y_k)^m_{k=1}$ and by condition (\ref{3.3}) we have
$$-\lambda(u_j(t_0,x_0)-w_j(t_0,y_0))-\sum\limits_{k\in
A_j}\Upsilon_{\varepsilon,\rho}^{j,k}(u_k(t_0,x_0)-w_k(t_0,y_0))\leq
C_{\varepsilon,\rho}+C\rho \int_{\R}\{|z|^2+|z|^4\}\Pi(dz),$$ where $\Upsilon_{\varepsilon,\rho}^{j,k}$ stands for
the increment rate of $f_j$ with respect to $y_k$ $(k\neq j$), which,  by
monotonicity condition
(A4)(I)(v) on $f_j$, is non-negative and bounded by $C_j$. Thus
\begin{align*}
-\lambda(u_j(t_0,x_0)-w_j(t_0,y_0))&\leq\sum\limits_{k\in A_j}\Upsilon_{\varepsilon,\rho}^{j,k}(u_k(t_0,x_0)-w_k(t_0,y_0))^++C_{\varepsilon,\rho}+C\rho \int_{\R}\{|z|^2+|z|^4\}\Pi(dz)\\
&\leq C_j\sum\limits_{k\in
A_j}(u_k(t_0,x_0)-w_k(t_0,y_0))^++C_{\varepsilon,\rho}+C\rho
\int_{\R}\{|z|^2+|z|^4\}\Pi(dz).
\end{align*}
Taking the limit superior in both hand-sides as
$\varepsilon\rightarrow 0$, once again $u_k$ (resp. $w_k$) is $usc$
(resp. $lsc$) and $j\in \underbar{A}$, we get
$$-\lambda(u_j(\bar{t},\bar{x})-w_j(\bar{t},\bar{x}))\leq C_j
\sum\limits_{k\in
A_j}(u_k(\bar{t},\bar{x})-w_k(\bar{t},\bar{x}))^++C\rho \int_{\R}\{|z|^2+|z|^4\}\Pi(dz),$$ finally take $\rho\rw 0$ to obtain,
\begin{align*}
-\lambda(u_j(\bar{t},\bar{x})-w_j(\bar{t},\bar{x}))&\leq C_j \sum\limits_{k\in A_j}(u_k(\bar{t},\bar{x})-w_k(\bar{t},\bar{x}))^+
\leq (m-1)C_j(u_j(\bar{t},\bar{x})-w_j(\bar{t},\bar{x})).
\end{align*}
But this is contradictory since $u_j(\bar{t},\bar{x})-w_j(\bar{t},\bar{x})>0$ and
$-\lambda>(m-1)C_j$. Henceforth for any $j\in A$, $u_j\leq w_j$.
\ms

We now consider the general case. Let $(u_j)_{j\in A}$ (resp. $(w_j)_{j\in A}$) be a subsolution (resp. supersolution) of
(\ref{FIPDE}). Denote $\tilde{u}_j(t,x)= e^{\lambda t}u_j(t,x)$ and
$\tilde{w}_j(t,x)= e^{\lambda t}w_j(t,x)$. Then it is easy to show that
$(\tilde{u}_j)_{j\in A}$ (resp. $(\tilde{w}_j)_{j\in A}$) is a subsolution (resp. supersolution) of the following system of variational inequalities which is similar to (\ref{FIPDE}):\be \label{eqsubvisco}\left\{
    \begin{array}{ll}
      min\{\tilde{u}_j(t,x)-\max\limits_{k\in A_j}(\tilde{u}_k(t,x)-e^{\lambda t}g_{jk}(t,x));\\
      -\partial_t \tilde{u}_j(t,x)-{\cal L} \tilde{u}_j(t,x)+\lambda\tilde{u}_j(t,x)-e^{\lambda t}f_j(t,x,(e^{-\lambda t}\tilde{u}_k)^m_{k=1})\}=0\,;\\
     \tilde{u}_j(T,x)=e^{\lambda T}h_j(x).
    \end{array}
    \right.\ee
Next let us set $$F_j(t,x,\overrightarrow{y}):=-\lambda y_j+e^{\lambda
t}f_j(t,x,(e^{-\lambda t}y_k)^m_{k=1})$$ with $ \lambda$ is chosen such that $\lambda\geq m(1+\max\limits_{k\in
A}C_k)$ where $C_k$ is the Lipschitz constant of $f_k$ w.r.t. to $(y_k)_{k=1}^m$. Then we can mimic the proof of Step 1 to obtain that $\forall j\in A, \tilde{u}_j\leq\tilde{w}_j$
%. the functions $F_k$, $k\in A$, verify condition (\ref{3.3}). It follows, from Step 1, that 
which yields also $u_j\leq w_j$ for any $j\in A$. The proof is now complete.
\end{proof}
As a by-product we have:
\begin{thm}\label{exuncr}Under Assumptions ${\bf (A4)}$, ${\bf (A5)}$,  and (\ref{eqlipb}), (\ref{sigma}) as well, the system of variational inequalities with inter-connected
obstacles $(\ref{FIPDE})$ has a unique continuous viscosity solution
with polynomial growth.\qed
\end{thm}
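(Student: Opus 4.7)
The plan is to combine the existence result established in the preceding subsection (via the construction of $(u_j)_{j\in A}$ from the solution of the system of reflected BSDEs (\ref{SRBSDE})) with the comparison Proposition \ref{uniq}. The polynomial growth of the candidate solution is already guaranteed by Corollary \ref{croipoly}, so the two remaining issues are: continuity of $(u_j)_{j\in A}$ and uniqueness within the class $\Pi_g$.

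First I would recall existence: by Corollary \ref{croipoly} together with the existence theorem proved just before the uniqueness subsection, the functions $(u_j(t,x))_{j\in A}$ defined by $u_j(t,x)=Y^{j,t,x}_t$ are of polynomial growth and, at least in the sense of Definition \ref{defvisco} (i.e.\ through the lsc/usc envelopes), form a viscosity solution of (\ref{FIPDE}). So the only thing left is to upgrade these functions to genuinely continuous ones and show that no other $\Pi_g$-solution can exist.

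For continuity and uniqueness simultaneously, the key move is to apply Proposition \ref{uniq} to the envelopes of $(u_j)_{j\in A}$ themselves. By definition of a viscosity solution (Definition \ref{defvisco}), $(u_j^*)_{j\in A}$ is a subsolution and $(u_{j*})_{j\in A}$ is a supersolution, both lying in $\Pi_g$ since $u_j\in \Pi_g$. The comparison principle then gives $u_j^*\leq u_{j*}$ for every $j\in A$. Since the reverse inequality $u_{j*}\leq u_j\leq u_j^*$ holds trivially, we conclude $u_{j*}=u_j=u_j^*$ on $\esp$ for each $j$, which is exactly the continuity of $u_j$. Uniqueness follows by the same argument: if $(v_j)_{j\in A}\in \Pi_g$ is another viscosity solution, apply Proposition \ref{uniq} to the pair $(u_j)_{j\in A}$ (as subsolution) and $(v_j)_{j\in A}$ (as supersolution) to get $u_j\leq v_j$, and then swap the roles to obtain $v_j\leq u_j$, so that $u_j\equiv v_j$.

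The only subtlety I anticipate is verifying that the existence theorem really delivers a \emph{pair} $(u_j^*, u_{j*})$ of sub/supersolutions in the sense of Definition \ref{defvisco} (rather than just a sub- and supersolution assembled from different sequences), but this is built into the definition and into the two steps of the existence proof, which showed separately that the lsc limits $(u_j)_{j\in A}$ form a supersolution and that $(u_j^*)_{j\in A}$ is a subsolution. No substantial new work is needed; the result is essentially a bookkeeping assembly of the existence construction and Proposition \ref{uniq}.
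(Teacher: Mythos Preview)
Your proposal is correct and matches the paper's approach: the theorem is presented there as an immediate by-product of the existence result and the comparison Proposition \ref{uniq}, with no further argument given. Your explicit spelling-out of the standard step---applying comparison to the envelopes $(u_j^*)_{j\in A}$ and $(u_{j*})_{j\in A}=(u_j)_{j\in A}$ to obtain continuity, then swapping roles for uniqueness---is exactly what the paper's \qed is meant to encode.
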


In the case when the functions $f_j$, $j\in A$, do not depend on $\vec{y}$, by the characterization (\ref{bsderep})-(\ref{lienentreyetv}) (see also Remark \ref{expvalue}), we deduce that the functions $(u_j(t,x))_{j\in A}$ are nothing but $(J^j(t,x))_{j\in A}$. Thus, as a by product of Theorem \ref{exuncr}, we have:
\begin{cor} \lb{corhjb} Assume that:

(i) For any $i=1,\dots,m$, $f_i$ is jointly continuous and of polynomial growth ;

(ii) For any $i,j\in A$, $g_{ij}$ (resp. $h_i$) satisfy (A4)(II) (resp. (A4)(III)). 

\noindent Then the value functions $(J^j(t,x))_{j\in A}$ defined in (\ref{eqcout}) are continuous, belong to $\Pi_g$ and is the unique viscosity solution of the Hamilton-Jacobi-Bellman system associated with the stochastic optimal switching problem which is:
$\forall
j\in A$, \be \label{FIPDEpar}\left\{
    \begin{array}{ll}
      min\{u_j(t,x)-\max\limits_{\ell \in A_j}(u_\ell(t,x)-g_{j\ell}(t,x));\\\qq\qq
      -\partial_t u_j(t,x)-{\cal L} u_j(t,x)-f_j(t,x)\}=0, \,(t,x)\in \esp ;\\
      u_j(T,x)=h_j(x). 
    \end{array}
    \right.\ee\qed
\end{cor}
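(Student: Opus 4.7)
The plan is to deduce this corollary directly from Theorem~\ref{exuncr} together with the stochastic representation (\ref{lienentreyetv}), exploiting the fact that here the generators $f_i$ depend only on $(t,x)$.

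First I would verify the hypotheses of Theorem~\ref{exuncr}. Since $f_i$ is independent of $\overrightarrow{y}$ and $\zeta$, Assumption (A5) is immediate, (A4)(I)(ii) holds trivially (zero Lipschitz constant in $(\overrightarrow{y},\zeta)$), (A4)(I)(iv) is satisfied with $V^{U^1,U^2,i}\equiv 0$ which meets (A2) vacuously, and (A4)(I)(v) is vacuous. Joint continuity and polynomial growth of $f_i$ yield (A4)(I)(i) and (iii), while (A4)(II)--(III) are assumed in the statement. With (\ref{eqlipb})--(\ref{sigma}) on $b,\sigma$ in force, Theorem~\ref{exuncr} delivers a unique continuous viscosity solution $(u_j)_{j\in A}\in \Pi_g$ of (\ref{FIPDEpar}).

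Next I would identify $u_j(t,x)$ with $J^j(t,x)$ using the switching representation. By Corollary~\ref{croipoly} and Theorem~\ref{maintheoremsystem}, the solution $(Y^j,U^j,K^j)_{j\in A}$ of (\ref{SRBSDE}) satisfies $Y^j_s=u_j(s,X^{t,x}_s)$, so that $Y^j_t=u_j(t,x)$ is deterministic. The representation (\ref{lienentreyetv}) applied at $t_0=t$ provides $a^*\in\cA^j_t$ with
\[
Y^j_t \;=\; \ess_{a\in \cA^j_t}(V^a_t - A^a_t) \;=\; V^{a^*}_t - A^{a^*}_t,
\]
where $V^a$ solves the BSDE (\ref{bsderep}), which is linear in $(V^a,N^a)$ because $f_i$ does not depend on $(\overrightarrow{y},\zeta)$. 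As $\theta_0=t$ for every $a\in\cA^j_t$, the switching cost process satisfies $A^a_t=0$, and conditioning on $\cF_t$ to kill the $H^{(i)}$-martingale part yields
\[
V^a_t \;=\; \E\!\left[h_{a(T)}(X^{t,x}_T)+\int_t^T f_{a(r)}(r,X^{t,x}_r)\,dr - A^a_T \,\Big|\, \cF_t\right].
\]

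Finally, determinism of $u_j(t,x)$ gives $u_j(t,x)=\E[V^{a^*}_t]=J_j^{a^*}(t,x)\leq J^j(t,x)$; conversely, $Y^j_t\geq V^a_t - A^a_t = V^a_t$ for every $a\in\cA^j_t$ yields $u_j(t,x)\geq \E[V^a_t]=J_j^a(t,x)$, hence $u_j\geq J^j$. Therefore $J^j=u_j$, and $J^j$ inherits continuity, polynomial growth and uniqueness as viscosity solution of (\ref{FIPDEpar}) from $u_j$. The only mildly delicate point is the passage from $\ess_a$ to $\sup_a$ after taking expectation, which is automatic here because the limit value is deterministic; no step presents a real obstacle once Theorem~\ref{exuncr} and the appendix representation (\ref{lienentreyetv}) are available.
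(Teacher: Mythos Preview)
Your argument is correct and follows essentially the same route as the paper: apply Theorem~\ref{exuncr} (after checking that the $\overrightarrow{y}$- and $\zeta$-free generators trivially fulfil (A4)(I) and (A5)), then invoke the switching representation (\ref{bsderep})--(\ref{lienentreyetv}) together with Remark~\ref{expvalue} to identify the viscosity solution $(u_j)_{j\in A}$ with the value functions $(J^j)_{j\in A}$. Your unpacking of the passage from $\ess$ to $\sup$ via determinism of $Y^j_t=u_j(t,x)$ is exactly what Remark~\ref{expvalue} records.
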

\subsection{Second existence and uniqueness result}
In this section we consider the issue of existence and uniqueness of a solution for the systems of IPDEs  (\ref{FIPDE}) when the functions $(-f_j)_{j\in A}$ verify (A4)(I). This turns into assuming that $(f_j)_{j\in A}$ verify, instead of (A4)(I)(v), the following:
\ms

\noindent {\bf (A4)($\dag$)}: For any $j\in A$, for any $k\neq j$, the mapping $y_k\rightarrow f_j(t,x,y_1,\cdots,y_{k-1},y_k,y_{k+1},\cdots,y_m)$ is nonincreasing whenever the other components $(t,x,y_1,\cdots,y_{k-1},y_{k+1},\cdots,y_m)$ are fixed.
\ms

The other assumptions on $(-f_j)_{j\in A}$ remain the same.
\begin{thm}\label{resultat1}  Assume that Assumptions (A4)(II)-(III), (A5) are fulfilled and $(-f_j)_{j\in A}$ verify (A4)(I). Then the system of IPDEs (\ref{FIPDE}) has a continuous and of polynomial growth solution which is moreover unique. \end{thm}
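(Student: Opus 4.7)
The plan is to combine the existence-uniqueness result for the system of RBSDEs (\ref{SRBSDE}) (Theorem \ref{maintheoremsystem}), which does not use the monotonicity (A4)(I)(v), with the existence-uniqueness result for the system of IPDEs (\ref{FIPDE}) already proved under (A4)(I)(v) (Theorem \ref{exuncr}), through a Picard iteration carried out at the PDE level. Under (A5) the hypothesis (A4)(I)(iv) holds trivially ($f_i$ does not depend on $\zeta$, so one takes $V^{U^1,U^2,i}\equiv 0$), while the Lipschitz and polynomial growth conditions (A4)(I)(ii)--(iii), as well as (A4)(II) and (A4)(III), are symmetric in the sign of $f_i$. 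Hence Theorem \ref{maintheoremsystem} produces a unique solution $(Y^j,U^j,K^j)_{j\in A}$ of (\ref{SRBSDE}), and the task reduces to exhibiting a continuous deterministic representative $u^j(t,x)$ of $Y^{j,t,x}_t$ which solves (\ref{FIPDE}) in the viscosity sense, and to proving uniqueness.

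Fix continuous $(v^j_0)_{j\in A}\in [\pl]^m$ (for instance $v^j_0\equiv 0$). Inductively, given continuous $(v^j_n)_{j\in A}\in [\pl]^m$, define $(v^j_{n+1})_{j\in A}$ as the unique continuous viscosity solution of polynomial growth, guaranteed by Theorem \ref{exuncr}, of
\begin{equation*}
\min\{u_j-\max_{k\in A_j}(u_k-g_{jk})\,;\,-\partial_t u_j-\cL u_j-F^n_j(t,x,u_j)\}=0,\qquad u_j(T,x)=h_j(x),
\end{equation*}
where $F^n_j(t,x,y):=f_j(t,x,v^1_n(t,x),\dots,v^{j-1}_n(t,x),y,v^{j+1}_n(t,x),\dots,v^m_n(t,x))$. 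Since $F^n_j$ does not depend on any $y_k$ with $k\neq j$, the monotonicity (A4)(I)(v) is trivially satisfied and Theorem \ref{exuncr} indeed applies. By the Feynman--Kac link established in the course of that theorem, $v^j_{n+1}(t,x)=\hat Y^{j,n+1,t,x}_t$ where $(\hat Y^{j,n+1})_{j\in A}$ solves the system obtained from (\ref{SRBSDE}) by substituting $v^k_n(r,X^{t,x}_r)$ for $Y^{k}_r$ in each generator. This is precisely the iteration map $\Theta$ used in Step 2 of the proof of Theorem \ref{maintheoremsystem}; by (\ref{map}) it is a contraction on $([H^2]^m,\|\cdot\|_{2,\beta})$ for $\beta$ large, so $(\hat Y^{j,n,t,x})_{j\in A}\to (Y^{j,t,x})_{j\in A}$ in $[H^2]^m$, and by (\ref{add5}), $v^j_n(t,x)\to u^j(t,x):=Y^{j,t,x}_t$ pointwise on $\esp$.

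To upgrade pointwise convergence to locally uniform convergence, and thereby obtain continuity of $u^j$, the plan is to establish uniform polynomial growth and a uniform local modulus of continuity for the family $(v^j_n)_n$: the former propagates by induction through the a priori bound (\ref{estimateyku}) and the contraction, the latter follows from the Lipschitz dependence of $X^{t,x}$ on $(t,x)$ combined with standard RBSDE stability estimates. An Arzel\`a--Ascoli argument then yields locally uniform convergence $v^j_n\to u^j$, hence continuity of $u^j$. Stability of viscosity solutions under locally uniform convergence, applied to the frozen PDEs whose generators and obstacles converge locally uniformly to those of (\ref{FIPDE}), then shows that $(u^j)_{j\in A}$ is a viscosity solution of (\ref{FIPDE}).

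For uniqueness, Proposition \ref{uniq} cannot be applied since its proof relies on (A4)(I)(v). Instead, let $(w^j)_{j\in A}$ be any continuous viscosity solution of (\ref{FIPDE}) in $[\pl]^m$. For each fixed $j\in A$, freezing the $w^k$ ($k\neq j$) reduces the $j$-th equation to a scalar IPDE with continuous obstacle $\max_{k\in A_j}(w^k(t,x)-g_{jk}(t,x))$ and generator $f_j(t,x,w^1(t,x),\dots,y,\dots,w^m(t,x))$, satisfying Assumptions (A3). The Ren--El Otmani representation, together with the comparison principle for scalar obstacle IPDEs, give $w^j(t,x)=\bar Y^{j,t,x}_t$, where $(\bar Y^j,\bar U^j,\bar K^j)$ solves the associated scalar RBSDE; assembling these identities for $j=1,\dots,m$ shows that $(\bar Y^j,\bar U^j,\bar K^j)_{j\in A}$ solves the full system (\ref{SRBSDE}), and the uniqueness in Theorem \ref{maintheoremsystem} forces $\bar Y^j=Y^j$ and therefore $w^j=u^j$. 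The main obstacle is the equicontinuity step in the proof of continuity of $u^j$: since the BSDE contraction lives only in $H^2$, upgrading pointwise convergence of the Picard iterates to locally uniform convergence requires uniform regularity estimates that must be carried through a non-monotone iteration, precisely because (A4)(I)(v) fails.
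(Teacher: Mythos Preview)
Your overall strategy is close to the paper's, and your uniqueness argument is correct (in fact a mild variant of the paper's: the paper uses the fixed-point map $\Theta$ with \emph{all} components frozen, while you decouple into $m$ scalar obstacle problems; both lead to the conclusion that any continuous solution yields a fixed point of the contraction and hence coincides with the RBSDE solution). However, there are two genuine issues in the existence part.

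First, a technical slip: your Picard map, with generator $F^n_j(t,x,y)=f_j(t,x,v^1_n,\dots,y,\dots,v^m_n)$ keeping the $j$-th slot free, is \emph{not} the map $\Theta$ of (\ref{defteta})--(\ref{theta}): in the paper's $\Theta$ all $m$ components are frozen (under (A5) the generator becomes $f_j(r,X_r,\overrightarrow{\Gamma_r})$). So neither (\ref{map}) nor (\ref{add5}) applies to your iteration as stated; you would need to redo the contraction estimate for your modified map.

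Second, and more importantly, the continuity of the limit $u^j$ is the heart of the matter, and you have not proved it. You yourself flag it as ``the main obstacle'' and propose to get equicontinuity of the iterates from ``standard RBSDE stability estimates'' plus Arzel\`a--Ascoli, but no such uniform-in-$n$ modulus is produced, and carrying one through a non-monotone iteration is precisely what fails when (A4)(I)(v) is dropped. The paper's key idea, which your proposal misses, is to \emph{exploit} the anti-monotonicity (A4)($\dag$): after the $e^{\lambda t}$ change of unknown one makes $F_j$ nonincreasing in \emph{every} $y_k$, and then a second Picard sequence $\bar Y^{j,n}$ initialized at a deterministic upper barrier $C(1+|X^{t,x}_s|^p)\ge Y^{j,t,x}$ alternates around the fixed point, i.e.\ $\bar Y^{j,2n+1}\le Y^{j,t,x}\le \bar Y^{j,2n}$, by Remark \ref{remarquecomparaison}. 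The corresponding continuous functions $\bar u^{2n+1}_j\nearrow u_j$ and $\bar u^{2n}_j\searrow u_j$, so $u_j$ is both lsc and usc, hence continuous. This sandwich argument replaces any need for equicontinuity estimates and is exactly where the hypothesis on $(-f_j)_{j\in A}$ is used.
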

\begin{proof}: We first focus on the issue of existence.

For any $j\in A$ and $\lambda\in \R$ let us define $F_j$ by:
$$F_j(t,x,y^1,\cdots,y^m):=e^{\lambda t} f_j(t,x,e^{-\lambda
t}y^1,\cdots,e^{-\lambda t}y^m)-\lambda y^j.$$ Since $f_j$ is
uniformly Lipschitz w.r.t. $(y_k)_{k=1,m}$ then $F_j$ is so and for $\lambda$
large enough, $F_j$ satisfies:
\ms

For any $k=1,m$, the mapping $y_k\rightarrow F_j(t,x,y_1,\cdots,y_{k-1},y_k,y_{k+1},\cdots,y_m)$ is {\bf nonincreasing} whenever the other components $(t,x,y_1,\cdots,y_{k-1},y_{k+1},\cdots,y_m)$ are fixed.
\ms

Let us now
consider the following iterative Picard sequence : $\forall j\in A$,
$Y^{j,0}=0$ and for $n\geq 1$, define:
$$(Y^{1,n},\cdots,Y^{m,n})=\Theta((Y^{1,n-1},\cdots,Y^{m,n-1}))$$
where $\Theta$ is the mapping defined in (\ref{defteta})-(\ref{theta}) where $f_j$ is replaced with $F_j$.  By (\ref{map}), the
sequence $(Y^{j,n})_{j\in A}$ converges in $([H^2]^m,\Vert
.\Vert_{2,\beta})$ to the unique solution $(Y^{j})_{j\in A}$ of the
system of RBSDEs associated with
$$((F_j(s,X^{t,x}_s,y^1,\cdots,y^m))_{j\in A},(e^{\lambda
T}h_j(X^{t,x}_T))_{j\in A},(e^{\lambda
t}g_{jk}(s,X^{t,x}_s))_{j,k\in A}).$$ So using an induction argument
on  $n$  and Theorem \ref{exuncr}, there exist deterministic continuous
functions with polynomial growth $(u^n_j)_{j\in A}$ such that: for any $n\geq 0$ and $j\in A$, 
\be\label{equn}\forall (t,x)\in[0,T]\times \R,\forall s\in[t,T],
Y^{j,n}_s=u^n_j(s,X^{t,x}_s).\ee By (\ref{add5}), take $s=t$ we obtain $$\forall
j, n, q,t\leq T,x\in \R, \,|u_j^n(t,x)-u_j^q(t,x)|=\E[|Y^{j,n}_t-Y^{j,q}_t|^2]\leq
C\Vert(Y^{j,n-1})_{j\in A}-(Y^{j,q-1})_{j\in A}\Vert^2_{2,\beta}.$$
Thus for any $j\in A$, $(u^n_j)_{n\geq 0}$ is of Cauchy type and converges pointwisely to a
deterministic function $u_j$.
But $(Y^j)_{j\in A}=\Theta((Y^j)_{j\in A})$, then once more by (\ref{add5}), we also have: \be\label{2418}\forall s\in[0,T],\,\,\E[|Y^{j}_s-Y^{j,m}_s|^2]\leq C\Vert(Y^{j})_{j\in
A}-(Y^{j,m-1})_{j\in A}\Vert^2_{2,\beta}.\ee By (\ref{equn}) we then obtain \be\label{2419}\forall j\in A,
\forall s\in[t,T], \cP-a.s.,Y^j_s=u_j(s,X^{t,x}_s).\ee
Next as $\Theta$ is a contraction then, by induction on $n$ we have $$\forall
n,q\geq0,~~\Vert(Y^{j,n+q})_{j\in A}-(Y^{j,n})_{j\in
A}\Vert_{2,\beta}\leq\frac{C^n_\Theta}{1-C_\Theta}\Vert(Y^{j,1})_{j\in
A}\Vert_{2,\beta}$$ where $C_\Theta\in ]0,1[$ is the constant of
contraction of $\Theta$. Since the norms $\|.\|$ and
$\|.\|_{2,\beta}$ are equivalent, then there exists a constan $C_1$
such that :$$\forall n,q\geq 0,~~\Vert(Y^{j,n+q})_{j\in
A}-(Y^{j,n})_{j\in A}\Vert\leq C_1C^n_\Theta\Vert(Y^{j,1})_{j\in
A}\Vert.$$
Take now the limit as $q$ goes to $+\infty$ and in the view of
 (\ref{2418}) and (\ref{2419}), if we take $s=t$ we deduce that :
$$\forall (t,x)\in[0,T]\times \R,~~ |u_j(t,x)-u^n_j(t,x)|\leq  C_2\Vert(Y^{j,1})_{j\in A}\Vert.$$
But it is easy to check that $\Vert(Y^{j,1})_{j\in A}\Vert(t,x)$ is of polynomial growth (by (\ref{ini1})-(\ref{ini2}) and since $\E[\sup_{s\leq T}|X^{t,x}_s|^\gamma]$ is of polynomial growth for any $\gamma\geq 0$). Therefore for any $j\in A$, $u_j$ is of polynomial growth, i.e., belongs to $\pl$ since $u^n_j$ is so.
We will now show the continuity of $u_j$. For any $j\in A$, let us set
$$\bar Y^{j,0}_s=C(1+|X^{t,x}_s|^p),\,\,s\leq T,$$ where $C$ and $p$ are related to polynomial growth of $(u_j)_{j\in A}$, i.e., $$\forall j\in A,~|u_j(t,x)|\leq C(1+|x|^p),~~\forall (t,x)\in [0,T]\times \R.$$
Next for any $n\geq 1$ and $j\in A$ let us set
$$(\bar Y^{1,n},\cdots,\bar Y^{m,n})=\Theta((\bar Y^{1,n-1},\cdots,\bar Y^{m,n-1})).$$ As $\Theta$ is a contraction then once more the sequence $((\bar Y^{j,n})_{j\in A})_{n\ge 0}$ converges in
$([H^2]^m,\|.\|_{2,\beta})$ to $(Y^{j,t,x})_{j\in A}$ the unique solution of the system of RBSDEs associated with $$((F_j(s,X^{t,x}_s,y^1,\cdots,y^m))_{j\in A},(e^{\lambda T}h_j(X^{t,x}_T))_{j\in A},(e^{\lambda t}g_{jk}(s,X^{t,x}_s))_{j,k\in A}).$$ By the definition of $\bar Y^{j,0}$, we have
$$\cP-a.s.,~\forall j\in A, s\in [t,T], ~~Y^{j,t,x}_s\leq \bar Y^{j,0}_s $$
and taking into account of {\bf (A4)($\dag$)} we obtain
$$\forall j\in A,\,\forall s\in [t,T],\,
F_j(s,X^{t,x}_s,Y^{1,t,x}_s,\cdots,Y^{m,t,x}_s)\geq F_j(s,X^{t,x}_s,\bar Y_s^{1,0},\cdots,\bar Y_s^{m,0}).
$$
Next by the comparison result of Remark \ref{remarquecomparaison} and since
$(\bar Y^{j,1})_{j\in A}=\Theta((\bar Y^{j,0})_{j\in A})$ , $(Y^{j,t,x})_{j\in A}=\Theta((Y^{j,t,x})_{j\in A})$ we get
$$\forall j\in A, \,s\in [t,T], \,
\bar Y^{j,1}_s\leq Y^{j,t,x}_s.$$
Now by an induction argument we obtain, for any $n\geq 0$ and $j\in A$, \be\label{other1}\forall  s\in [t,T], ~~\bar Y_s^{j,2n+1}\leq Y_s^{j,t,x}\leq \bar Y_s^{j,2n}.\ee In the same way as previously there exist deterministic continuous functions $\bar {u}^n_j$ with polynomial growth such that
$$\forall (t,x)\in [0,T]\times \R,\,\, s\in [t,T],~~\bar Y^{j,n}_s=\bar {u}^n_j(s,X^{t,x}_s).$$ Moreover for any $j\in A$, the sequence $(\bar u_j^n)_{n}$ converges pointwisely to $u$ and by (\ref{other1}) we have
$$\forall j\in A,\, \forall (t,x), \,u_j(t,x)=\lim_n\nearrow \bar {u}^{2n+1}_j(t,x)=\lim_n\searrow\bar {u}^{2n}_j(t,x).$$ Therefore, $u_j$, $j\in A$, is both $lsc$ and $usc$ and then continuous. Finally as
$(Y^{j,t,x})_{j\in A}=\Theta((Y^{j,t,x})_{j\in A})$ and $\forall j\in A,\,\,Y^{j,t,x}_s=u_j(s,X^{t,x}_s)$, $s\in [t,T]$, with $u_j$ a deterministic continuous function with polynomial growth, then $(u_j)_{j\in A}$ is a viscosity solution of the following system of IPDEs:
\be \label{FIPDEmod}\left\{
    \begin{array}{ll}
      min\{u_j(t,x)-\max\limits_{\ell\in A_j}(u_\ell(t,x)-e^{\lambda t}g_{j\ell}(t,x));\\\qq\qq
      -\partial_t u_j(t,x)-{\cal L} u_j(t,x)-F_j(t,x,u_1(t,x),\cdots, u_m(t,x))\}=0, \,(t,x)\in \esp ;\\
      u_j(T,x)=e^{\lambda T}h_j(x), \end{array}
    \right.\ee
thus $(e^{-\lambda t}u_j)_{j\in A}$ is a viscosity solution of the system of IPDEs (4.1) with polynomial growth.
\ms

\nd Let us now deal with the issue of uniqueness. Let $(\bar{u}_j)_{j\in A}$ be another continuous solution of
(\ref{FIPDE}) which belongs to $\pl$ and $(\bar{Y}^j)_{j\in A} \in
[H^2]^m$ such that for any $j\in A, s\in [t,T]$,
$$\bar{Y}^{j,t,x}_s=\bar{u}_j(s,X^{t,x}_s).$$ Define
$(\tilde{Y}^{j,t,x})_{j\in A}$ as follow:
$$(\tilde{Y}^{j,t,x})_{j\in A}=\Theta((\bar{Y}^{j,t,x})_{j\in A}).$$
Then there exist
$(\tilde{u}_j)_{j\in A}$ deterministic continuous functions with polynomial growth $(\tilde{u}_j)_{j\in A}$ such that: $$\forall
j\in A, s\in [t,T],~\tilde{Y}^{j,t,x}_s=\tilde{u}_j(s,X^{t,x}_s).$$
Moreover $(\tilde{u}_j)_{j\in A}$ is the unique viscosity solution
of the following system of IPDEs : $\forall j\in A$

\be \label{unique-f}\left\{
    \begin{array}{ll}
      min\{\tilde{u}_j(t,x)-\max\limits_{k\in
A_j}(\tilde{u}_k(t,x)-g_{jk}(t,x));\\
      \qq\qq-\partial_t u_j(t,x)-{\cal L}\tilde{u}_j(t,x)-f_j(t,x,(\bar{u}_k(t,x))_{k\in A})\}=0\,;\\\\
      \tilde{u}_j(T,x)=h_j(x).
    \end{array}
    \right.\ee
Note that it is $(\bar u_k(t,x))_{k\in A}$ inside the arguments of $f_j$ and not $(\tilde u_k(t,x))_{k\in A}$. As $(\bar{u}_j)_{j\in A}$ is also a solution of (\ref{unique-f}), then by uniqueness of Theorem \ref{exuncr} we obtain $\tilde{u}_j=\bar{u}_j$, for any $j\in A$. Therefore
 $$(\bar{Y}^{j,t,x})_{j\in
A}=\Theta((\bar{Y}^{j,t,x})_{j\in A}).$$ As $(Y^j)_{j\in A}$ is the
unique fixed point of $\Theta$ in $[H^2]^m$, we then have $$\forall j\in
A, s\in[t,T], ~~\bar{Y}^{j,t,x}_s=Y^j_s.$$ It follows that  $\forall
j\in A, \bar{u}_j=u_j$. Finally $(u_j(t,x))_{j\in A}$ is the unique continuous with polynomial
growth functions
viscosity solution of the system of IPDEs (4.1).
\end{proof}
\section{Appendix}

\subsection{Representation of the value function of the stochastic optimal switching problem.}\label{app1}
Let $\Upsilon:=(\theta_n,\a_n)_{n\ge 0}$ be an admissible strategy of switching and let $a=(a_s)_{s\in [0,T]}$ be the process defined by
\be\forall s\leq T,\,\,a_s:=\alpha_0\mathbbm{1}_{\{\theta_0\}}(s)+\sum\limits^\infty_{j=1}\alpha_{j-1}\mathbbm{1}_{]\theta_{j-1}\theta_j]}(s).\ee
Let $t_0\in [0,T]$ and $\Gamma:=((\Gamma^j_s)_{s\in [0,T]})_{j\in
A}\in [H^2]^m$. Let us define the pair of processes $(V^a,N^a):=(V^a_s, N^a_s)_{s\in [0,T]}$ as the solution of the following BSDE:\be\label{switchingBSDE}\left\{
    \begin{array}{ll}
     V^a\in \cS^2,~N^a\in \cH^2(l^2)\\
V^{a}_s=h_{a(T)}(X^{t,x}_T)+\int^T_s \mathbbm{1}_{\{r\geq t_0\}}f_{a(r)}(r,X^{t,x}_r
,\overrightarrow{\Gamma_r},N^a_r)dr-\sum\limits^\infty_{i=1}\int^T_s
N^{a,i}_rdH^{(i)}_r-A^a_T+A^a_s,\, s\in [0,T],
    \end{array}
    \right.\ee where $\overrightarrow{\Gamma_r}=(\Gamma^k_r)_{k\in A}$ and $A^a$ is the cumulative switching cost associated with the strategy $a$ or $\Upsilon$ (see (\ref{def_A}) for its definition). This BSDE is not a standard one, but in assuming that $\E[(A^a_T)^2]<\infty$ and by setting 
$\bar{V}^a=V^a-A^a$, it becomes a standard one and then it has a unique solution. Note that $V^a$ is RCLL since $A^a$ is so. 
\begin{propo}\label{repres} Under Assumption {\bf (A4)}(ii)-(iv), (II) and (III) the solution of  BSDE
(\ref{switchingBSDE}) satisfies: $\forall j\in
A$, \be\label{sw}Y^{\Gamma,j}_{t_0}=\ess_{a\in {\cal
A}^j_{t_0}}( V^a_{t_0}-A^a_{t_0}),\,\,\cP-a.s.\ee where $(Y^{\Gamma,j})_{j\in A}$ is the first component of the
solution of the BSDE (\ref{theta}). Thus the solution of
(\ref{theta}) is unique. Moreover there exists $a^*\in \cA^j_{t_0}$ such that $Y^{\Gamma,j}_{t_0}=V^{a^*}_{t_0}-A^{a^*}_{t_0}.$
\end{propo}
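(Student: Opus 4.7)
The plan is to establish (\ref{sw}) via the two standard inequalities and then exhibit an optimal strategy. For the easy direction $Y^{\Gamma,j}_{t_0}\geq V^a_{t_0}-A^a_{t_0}$ for an arbitrary $a=((\theta_k,\alpha_k))_{k\geq 0}\in \cA^j_{t_0}$, I would build a ``glued'' process on $[t_0,T]$ by concatenating the pieces of $Y^{\Gamma,\alpha_k}$ on the random intervals $[\theta_k,\theta_{k+1}]$. On each such interval, the first equation of (\ref{theta}) shows that $Y^{\Gamma,\alpha_k}$ solves a BSDE with driver $f_{\alpha_k}(\cdot,X^{t,x},\overrightarrow{\Gamma},\cdot)$ up to the non-decreasing term $K^{\Gamma,\alpha_k}$, and at the switching time $\theta_{k+1}$ the obstacle condition gives
$$Y^{\Gamma,\alpha_k}_{\theta_{k+1}}\geq Y^{\Gamma,\alpha_{k+1}}_{\theta_{k+1}}-g_{\alpha_k\alpha_{k+1}}(\theta_{k+1},X^{t,x}_{\theta_{k+1}}).$$
Iterating the comparison result of Proposition \ref{comp} on the successive intervals $[\theta_k,\theta_{k+1}]$ (the required condition on the generators being trivially inherited from (A4)(I)(iv) via the same argument used in Step 2 of the proof of Theorem \ref{maintheoremsystem}) and then letting $k\to\infty$, where $\cP[\lim_k\theta_k<T]=0$ by admissibility of $a$, yields
$$Y^{\Gamma,j}_{t_0}\geq V^a_{t_0}-A^a_{t_0},\quad \cP\text{-a.s.}$$

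For the reverse inequality I would construct a candidate optimal strategy $a^*=((\theta^*_k,\alpha^*_k))_{k\geq 0}$ recursively by $\theta^*_0=t_0$, $\alpha^*_0=j$ and, for $k\geq 0$,
$$\theta^*_{k+1}:=\inf\Big\{s\geq\theta^*_k:\,Y^{\Gamma,\alpha^*_k}_s=\max_{\ell\neq\alpha^*_k}\big(Y^{\Gamma,\ell}_s-g_{\alpha^*_k\ell}(s,X^{t,x}_s)\big)\Big\}\wedge T,$$
with $\alpha^*_{k+1}\in A_{\alpha^*_k}$ any $\cF_{\theta^*_{k+1}}$-measurable selector of the argmax above. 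By the Skorohod condition in (\ref{theta}), the process $K^{\Gamma,\alpha^*_k}$ does not increase on $[\theta^*_k,\theta^*_{k+1})$, so on each such interval $Y^{\Gamma,\alpha^*_k}$ and $V^{a^*}+A^{a^*}$ solve the same standard BSDE; by uniqueness they coincide on the pieces, and continuity of the processes $K^{\Gamma,\ell}$ (established in Step 1 of the proof of Theorem \ref{maintheoremsystem}) then propagates the identity across the switching instants, giving $Y^{\Gamma,j}_{t_0}=V^{a^*}_{t_0}-A^{a^*}_{t_0}$.

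The crucial and only non-routine step is to prove that $a^*$ is admissible, i.e.\ $\cP[\lim_k\theta^*_k<T]=0$ and $\E[(A^{a^*}_T)^2]<\infty$. If $\tau:=\lim_k\theta^*_k<T$ with positive probability, then on $\{\tau<T\}$ infinitely many indices repeat in $(\alpha^*_k)_{k\geq 0}$, so one can extract a cyclic loop $\ell_1,\dots,\ell_p=\ell_1$ such that
$$Y^{\Gamma,\ell_1}_{\tau-}=Y^{\Gamma,\ell_2}_{\tau-}-g_{\ell_1\ell_2}(\tau,X^{t,x}_{\tau-}),\ \dots,\ Y^{\Gamma,\ell_{p-1}}_{\tau-}=Y^{\Gamma,\ell_p}_{\tau-}-g_{\ell_{p-1}\ell_p}(\tau,X^{t,x}_{\tau-}),$$
which by summation forces $g_{\ell_1\ell_2}(\tau,X^{t,x}_{\tau-})+\cdots+g_{\ell_{p-1}\ell_p}(\tau,X^{t,x}_{\tau-})=0$, contradicting the non-free-loop property (A4)(II); this is exactly the argument already used at the end of Step 1 of the proof of Theorem \ref{maintheoremsystem}. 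Square-integrability of $A^{a^*}_T$ then follows from the standard estimate on $(Y^{\Gamma,\ell})_{\ell\in A}$ combined with the boundedness from below of the $Y^{\Gamma,\ell}$ by $\underbar Y$. Combining the two inequalities with the attainment by $a^*\in\cA^j_{t_0}$ proves (\ref{sw}) and the last assertion of the proposition. Finally, uniqueness of the solution of (\ref{theta}) is obtained as a by-product: the right-hand side of (\ref{sw}) depends only on the data $(f_j,g_{jk},h_j,\Gamma)$, so $Y^{\Gamma,j}_{t_0}$ is uniquely determined, which propagates to the whole process via the BSDE dynamics.
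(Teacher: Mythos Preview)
Your proposal follows essentially the same strategy as the paper: concatenate the pieces $Y^{\Gamma,\alpha_k}$ across the switching intervals to obtain the upper bound $Y^{\Gamma,j}_{t_0}\geq V^a_{t_0}-A^a_{t_0}$, then build the optimal strategy $a^*$ via the obstacle hitting times and use the non-free-loop property to rule out accumulation of switches before $T$. The paper carries out both comparisons not by iterating Proposition~\ref{comp} on intervals but by writing the concatenation explicitly (your ``glued'' process), invoking (A4)(I)(iv), and applying Girsanov; your version via piecewise comparison is an equivalent rephrasing.

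Two points need tightening. First, there is a circularity in your order of arguments: you invoke ``by uniqueness $Y^{\Gamma,j}_{t_0}=V^{a^*}_{t_0}-A^{a^*}_{t_0}$'' before establishing admissibility, but the very definition of the BSDE~(\ref{switchingBSDE}) for $V^{a^*}$ requires $\E[(A^{a^*}_T)^2]<\infty$. The paper avoids this by first deriving, purely algebraically from the Skorohod condition and the definition of $a^*$, the identity
\[
Y^{\Gamma,j}_{t_0}=h_{a^*(T)}(X^{t,x}_T)+\int^T_{t_0} f_{a^*(r)}(r,X^{t,x}_r,\overrightarrow{\Gamma_r},U^{a^*}_r)\,dr-\sum_{k\geq 1}\int^T_{t_0} U^{a^*,k}_r\,dH^{(k)}_r-A^{a^*}_T,
\]
and \emph{reading off} $\E[(A^{a^*}_T)^2]<\infty$ from this formula, since every other term is already in $L^2$. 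Only then is $V^{a^*}$ well-posed and the comparison (via Girsanov, or equivalently BSDE uniqueness) legitimate. Second, your justification of $\E[(A^{a^*}_T)^2]<\infty$ via ``boundedness from below of the $Y^{\Gamma,\ell}$ by $\underbar Y$'' is not a working argument: a lower bound on the $Y^{\Gamma,\ell}$ does not by itself control the cumulative switching cost. Replace it by the explicit-formula argument above and reorder accordingly; the rest of your outline is sound.
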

\begin{proof}
Let $(Y^{\Gamma,j}, U^{\Gamma,j},
K^{\Gamma,j})_{j\in A}$ be the solution of the system (\ref{theta}). Let $a\in \cA^j_{t_0}$ and let us define
$$\tilde{K}^a_T=(K^{\Gamma,j}_{\theta_1}-K^{\Gamma,j}_{t_0})+\sum\limits_{n\geq
1}(K^{\Gamma,\alpha_n}_{\theta_{n+1}}-K^{\Gamma,\alpha_n}_{\theta_{n}}) \mbox{ and }$$
 $$\forall i\geq 1\mbox{ and }r\leq T\,\\,U^{a,i}_r=\sum\limits_{n\geq 0}U^{\Gamma,\alpha_n,i}_r\mathbbm{1}_{[\theta_n\leq
    r<\theta_{n+1}[} \mbox{ and }U^a:=(U^{a,i})_{i\geq 1}.$$
Therefore
 \begin{align*}
Y^{\Gamma,j}_{t_0}&=Y^{\Gamma,j}_{\theta_1}+\int^{\theta_1}_{t_0}
f_j(r,X^{t,x}_r,\overrightarrow{\Gamma_r},
U^{\Gamma,j}_r)dr-\sum\limits_{i=1}^\infty\int^{\theta_1}_{t_0}
U^{\Gamma,j,i}_rdH^{(i)}_r+(K^{\Gamma,j}_{\theta_1}-K^{\Gamma,j}_{t_0})\\
&\geq
(Y^{\Gamma,\alpha_1}_{\theta_1}-g_{j,\alpha_1}(\theta_1,X^{t,x}_{\theta_1}))1_{[\theta_1<T]}+
1_{[\theta_1=T]}h_{\a_0}(X_T^{t,x})
+\int^{\theta_1}_{t_0}
f_{a(r)}(r,X^{t,x}_r,\overrightarrow{\Gamma_r},
U^{a}_r)dr\\&\qq-\sum\limits_{i=1}^\infty\int^{\theta_1}_{t_0}
U^{a,i}_rdH^{(i)}_r
+(K^{\Gamma,j}_{\theta_1}-K^{\Gamma,j}_{t_0})\\
 &= Y^{\Gamma,\alpha_1}_{\theta_2}1_{[\theta_1<T]}+\int^{\theta_2}_{t_0}
f_{a(r)}(r,X^{t,x}_r,\overrightarrow{\Gamma_r},
U^{a}_r)dr-\sum\limits_{i=1}^\infty\int^{\theta_2}_{t_0}
U^{a,i}_rdH^{(i)}_r+(K^{\Gamma,j}_{\theta_1}-K^{\Gamma,j}_{t_0})+(K^{\Gamma,\alpha_1}_{\theta_{2}}-K^{\Gamma,\alpha_1}_{\theta_{1}})\\
&\qq\qq-g_{j,\alpha_1}(\theta_1,X^{t,x}_{\theta_1})1_{[\theta_1<T]}+
1_{[\theta_1=T]}h_{\a_0}(X^{t,x}_T).\end{align*}
Repeat now this procedue as many times as necessary and since $a$ is an admissible startegy (i.e. $\cP[\theta_n<T,\forall n\geq 0]=0)$ we obtain:\be\label{ap1} Y^{\Gamma,j}_{t_0}\geq
h_{a(T)}(X^{t,x}_T)+\int^T_{t_0}
f_{a(r)}(r,X^{t,x}_r,\overrightarrow{\Gamma_r},
U^{a}_r)dr-\sum\limits_{i=1}^\infty\int^T_{t_0}
U^{a,i}_rdH^{(i)}_r-A^a_T+\tilde{K}^a_T.\ee
As $\tilde{K}^a_T\geq 0$ and by \eqref{switchingBSDE} we have
\begin{align*}
Y^{\Gamma,j}_{t_0}-V^a_{t_0}+A^a_{t_0}&\geq \int^T_{t_0}
(f_{a(r)}(r,X^{t,x}_r,\overrightarrow{\Gamma_r},
U^{a}_r)-f_{a(r)}(r,X^{t,x}_r
,\overrightarrow{\Gamma_r},N^a_r))dr-\sum\limits_{i=1}^\infty\int^T_{t_0}
(U^{a,i}_r-N^{a,i}_r)dH^{(i)}_r\\
&\geq \int^T_{t_0} \langle V^{U^a,N^a,a}, U^a-N^a\rangle^p_s
ds-\sum\limits_{i=1}^\infty\int^T_{t_0}
(U^{a,i}_r-N^{a,i}_r)dH^{(i)}_r
\end{align*}
Next by Girsanov's Theorem (\cite{philip}, pp.136), under the probability
measure
$d\tilde{\cP}:=\varepsilon
(\sum\limits_{i=1}^\infty
\int^\cdot_{t_0}V^{U^a,N^a,a,i}_rdH^{(i)}_r)_T
d\cP$, $(M_t:=\int^t_{t_0} \langle V^{U^a,N^a,a}, U^a-N^a\rangle^p_s
ds-\sum\limits_{i=1}^\infty\int^t_{t_0}
(U^{a,i}_r-N^{a,i}_r)dH^{(i)}_r)_{t\in [t_0,T]}$ is a martingale, and by taking
conditional expectation of $Y^{\Gamma,j}_{t_0}-V^a_{t_0}+A^a_{t_0}$, we obtain
\begin{align*}
\E_{\tilde{\cP}}[Y^{\Gamma,j}_{t_0}-V^a_{t_0}+A^a_{t_0}|\cF_{t_0}]&\geq \E_{\tilde{\cP}}[ \int^T_{t_0}
\langle V^{U^a,N^a,a,i}, U^a-N^a\rangle^p_s
ds-\sum\limits_{i=1}^\infty\int^T_{t_0}
(U^{a,i}_r-N^{a,i}_r)dH^{(i)}_r|\cF_{t_0}]= 0.
\end{align*}
Thus $Y^{\Gamma,j}_{t_0}\geq V^a_{t_0}-A^a_{t_0},\,\tilde{\cP}-a.s.$ and then, since $\cP$ and $\tilde{\cP}$ are equivalent, for any $a\in {\cal A}^j_{t_0}$,
\be\label{05151}Y^{\Gamma,j}_{t_0}\geq
V^a_{t_0}-A^a_{t_0},\,\cP-a.s..\ee

Next let us consider $a^*$ the strategy defined by $a^*(r)=\alpha^*_0\mathbbm{1}_{\{t_0\}}(r)+\sum\limits^\infty_{k=1}
\alpha^*_{k-1}\mathbbm{1}_{]\theta^*_{k-1}\theta^*_k]}(r)$, $r\leq T$, where $\theta^*_0=t_0$,
 $\alpha^*_0=j$ and for $n\geq 0$, $$\theta^*_{n+1}=\inf\{r\geq
 \theta^*_{n},~Y^{\Gamma,\alpha^*_n}_r=\max\limits_{k\in
 A_{\alpha^*_n}}(Y^{\Gamma,k}_r-g_{\alpha^*_n,k}(r,X^{t,x}_r))\}\wedge
 T,$$ and $$\alpha^*_{n+1}=arg\max\limits_{k\in
 A_{\alpha^*_n}}\{Y^{\Gamma,k}_{\theta^*_{n+1}}-g_{\alpha^*_n,k}(\theta^*_{n+1},X^{t,x}_{\theta^*_{n+1}})\}.$$
Let us show that $a^*\in {\cal A}^j_{t_0}$. We first prove that
$\cP[\theta^*_n< T,~\forall n\geq 0]=0$. We proceed by contradiction
assuming that $\cP[\theta^*_n< T,~\forall n\geq 0]>0$. By definition
of $\theta^*_n$, we then have
$$\cP[Y^{\Gamma,\alpha^*_n}_{\theta^*_{n+1}}=Y^{\Gamma,\alpha^*_{n+1}}_{\theta^*_{n+1}}-g_{\alpha^*_n,\alpha^*_{n+1}}(\theta^*_{n+1},X^{t,x}_{\theta^*_{n+1}}),~\a^*_{n+1}\in
A_{\a^*_{n}},~\forall n\geq 0]>0.$$ But $A$ is finite, then there is a loop $i_0, i_1,\cdots,i_k,i_0$ $(i_1\neq i_0)$ of
elements of $A$ and a subsequence $(n_q(\omega))_{q\geq
0}$ such that:
\be \label{eqlim22}\cP[Y^{\Gamma,i_{l}}_{\theta^*_{n_{q+l}}}=Y^{\Gamma,i_{l+1}}_{\theta^*_{n_{q+l}}}-g_{i_l,i_{l+1}}(\theta^*_{n_{q+l}},X^{t,x}_{\theta^*_{n_{q+l}}}),
~l=1,\cdots, k,~(i_{k+1}=i_0),~\forall q\geq 0]>0.\ee Next let us consider $\theta^*=\lim_{n\rw \infty}\theta^*_n$ and $\Theta=\{\theta^*_n<\theta^*,\forall n\geq 0\}.$ Thanks to the non free loop property \\ $\cP[(\theta^*<T)\cap \Theta^c]=0$ and then $\theta^*$ is an accessible stopping time (see e.g. \cite{delmeyerlr}, pp.214 for more details). But for any $j\in A$, the process $Y^j$ has only inaccessible jump times and $\theta^*$ is accessible, therefore for any $j\in A$, $\Delta Y^j_{\theta^*}=0, \cP-a.s.$. Going back to (\ref{eqlim22}) and take the limit w.r.t. $q$ to obtain:
$$\cP[g_{i_0,i_1}(\theta^*,X^{t,x}_{\theta^*})+\cdots+g_{i_k,i_0}(\theta^*,X^{t,x}_{\theta^*})=0]>0,$$
which contradicts the non free loop property. We then have
$\cP[\theta^*_j<T,~\forall j\geq 0]=0$.\\

Now it remains to prove that $\E[(A^{a^*}_T)^2]<\infty$ and
$a^*$ is optimal in $\cA^j_{t_0}$ for the switching problem (\ref{sw}).
Since $(Y^{\Gamma,j})_{j\in A}$ solves the RBSDE (\ref{theta}) and
by the definition of $a^*$, it yields:
\be\label{513}Y^{\Gamma,j}_{t_0}=Y^{\Gamma,j}_{
\theta^*_1}+\int^{\theta^*_1}_{t_0}
f_{a^*(r)}(r,X^{t,x}_r,\overrightarrow{\Gamma_r},U^{a^*}_r)dr-
\sum\limits_{k=1}^\infty\int^{\theta^*_1}_{t_0}
U^{a^*,k}_rdH^{(k)}_r\ee since $K^{\Gamma,j}_r-K^{\Gamma,j}_{\theta^*_0}=0$ holds
for any $r\in [{t_0}, \theta^*_{1}]$.
But $$
Y^{\Gamma,j}_{
\theta^*_1}=(Y^{\Gamma,\alpha_1^*}_{
\theta^*_1}-g_{j\alpha_1^*}(\theta_1^*,X^{t,x}_{\theta_1^*}))1_{[\theta^*_1<T]}+h_j(X^{t,x}_T)1_{[\theta_1^*=T]}
$$
then
\be\label{513x1}\begin{array}{ll} Y^{\Gamma,j}_{t_0}&=
(Y^{\Gamma,\alpha_1^*}_{
\theta^*_1}-g_{j\alpha_1^*}(\theta_1^*,X^{t,x}_{\theta_1^*}))
1_{[\theta^*_1<T]}+h_j(X^{t,x}_T)
1_{[\theta_1^*=T]}\\{}&\qq\qq+\int^{\theta^*_1}_{t_0}
f_{a^*(r)}(r,X^{t,x}_r,\overrightarrow{\Gamma_r},U^{a^*}_r)dr-
\sum\limits_{k=1}^\infty\int^{\theta^*_1}_{t_0}
U^{a^*,k}_rdH^{(k)}_r\\
{}&=
Y^{\Gamma,\alpha_1^*}_{
\theta^*_1}1_{[\theta^*_1<T]}
+h_j(X^{t,x}_T)
1_{[\theta_1^*=T]}\\{}&\qq\qq+\int^{\theta^*_1}_{t_0}
f_{a^*(r)}(r,X^{t,x}_r,\overrightarrow{\Gamma_r},U^{a^*}_r)dr-
\sum\limits_{k=1}^\infty\int^{\theta^*_1}_{t_0}
U^{a^*,k}_rdH^{(k)}_r-A^{a^*}_{\theta_1^*}.\end{array}
\ee
But we can do the same for the quantity
$
Y^{\Gamma,\alpha_1^*}_{
\theta^*_1}1_{[\theta^*_1<T]}
$ to obtain
$$
Y^{\Gamma,\alpha_1^*}_{
\theta^*_1}1_{[\theta^*_1<T]}=
Y^{\Gamma,\alpha_1^*}_{
\theta^*_2}1_{[\theta^*_2<T]}
+h_{\a_1^*}(X^{t,x}_T)
1_{[\theta_2^*=T]}1_{[\theta^*_1<T]}+
\int^{\theta^*_2}_{\theta_1^*}
f_{a^*(r)}(r,X^{t,x}_r,\overrightarrow{\Gamma_r},U^{a^*}_r)dr-
\sum\limits_{k=1}^\infty\int^{\theta^*_2}_{\theta^*_1}
U^{a^*,k}_rdH^{(k)}_r.
$$
Substitute now this equality in the previous one
and since $\a_2^*$ is the optimal index at
$\theta_2^*$
to obtain:
\be\label{513x1}\begin{array}{ll} Y^{\Gamma,j}_{t_0}&=(Y^{\Gamma,\alpha_2^*}_{
\theta^*_2}-g_{\alpha_1^*\alpha_2^*}(\theta_2^*,X^{t,x}_{\theta_2^*}))1_{[\theta^*_2<T]}
+h_{\a_1^*}(X^{t,x}_T)
1_{[\theta_2^*=T]}1_{[\theta^*_1<T]}
+h_j(X^{t,x}_T)
1_{[\theta_1^*=T]}\\{}&\qq\qq+\int^{\theta^*_2}_{t_0}
f_{a^*(r)}(r,X^{t,x}_r,\overrightarrow{\Gamma_r},U^{a^*}_r)dr-
\sum\limits_{k=1}^\infty\int^{\theta^*_2}_{t_0}
U^{a^*,k}_rdH^{(k)}_r-A^{a^*}_{\theta_1^*}\\{}&=Y^{\Gamma,\alpha_2^*}_{
\theta^*_2}1_{[\theta^*_2<T]}
+h_{\a_1^*}(X^{t,x}_T)
1_{[\theta_2^*=T]}1_{[\theta^*_1<T]}
+h_j(X^{t,x}_T)
1_{[\theta_1^*=T]}\\{}&\qq\qq+\int^{\theta^*_2}_{t_0}
f_{a^*(r)}(r,X^{t,x}_r,\overrightarrow{\Gamma_r},U^{a^*}_r)dr-
\sum\limits_{k=1}^\infty\int^{\theta^*_2}_{t_0}
U^{a^*,k}_rdH^{(k)}_r-A^{a^*}_{\theta_2^*}.\end{array}
\ee
Repeating this procedure as many times as necessary and since $\cP[\theta^*_j< T,~\forall j\geq 0]=0$ to get
\be\label{514}\bal Y^{\Gamma,j}_{t_0}=h_{a^*(T)}(X^{t,x}_T)+\int^T_{t_0}
f_{a^*(r)}(r,X^{t,x}_r,\overrightarrow{\Gamma_r},U^{a^*}_r)dr-\sum\limits_{k=1}^\infty\int^T_{t_0}
U^{a^*,k}_rdH^{(k)}_r-A^{a^*}_T.\ea\ee Now since $\Gamma\in [H^2]^m$,
$U^{a^*}\in \cH^2(\ell^2)$ and $Y^{\Gamma,j}\in \cS^2$, we deduce from
(\ref{514}) that $\E[(A^{a^*}_T)^2]<\infty$. Next by (\ref{switchingBSDE}),$$
\begin{array}{ll}
V^{a^*}_{t_0}-A^{a^*}_{t_0}-Y^{\Gamma,j}_{t_0}&=\int^T_{t_0}
f_{a^*(r)}(r,X^{t,x}_r,\overrightarrow{\Gamma_r},N^{a^*}_r)dr-
\int^T_{t_0}
f_{a^*(r)}(r,X^{t,x}_r,\overrightarrow{\Gamma _r},U^{a^*}_r)dr-
\sum\limits_{k=1}^\infty\int^T_{t_0}
(N^{a^*}_r-U^{a^*,k}_r)dH^{(k)}_r\\\\
&\geq  \int^T_{t_0} \langle V^{N^{a^*},U^{a^*},{a^*}},N^{a^*}
-U^{a^*}\rangle^p_r dr-\sum\limits_{k=1}^\infty\int^T_{t_0}
(N^{a^*,k}_r-U^{a^*,k}_r)dH^{(k)}_r.
\end{array}$$
Once more using Girsanov's Theorem, as in the bulk of the proof of Theorem \ref{maintheoremsystem}, to obtain \\$\E_{\tilde \cP}[V^{a^*}_{t_0}-A^{a^*}_{t_0}-Y^{\Gamma,i}_{t_0}|\cF_{t_0}]\geq 0$ and then $V^{a^*}_{t_0}-A^{a^*}_{t_0}-Y^{\Gamma,i}_{t_0}\geq 0,\cP-a.s.$ Taking now into account (\ref{05151}) leads to the desired result.
\end{proof}
\begin{rem}\label{expvalue} As a by product of (\ref{sw}) we have also:$$\forall j\in A,\,\,\E[Y^{\Gamma,j}_{t_0}]=\sup_{a\in {\cal
A}^j_{t_0}}\E[ V^a_{t_0}-A^a_{t_0}]. \qed
$$
\end{rem}
\subsection{Other equivalent definitions of viscosity solution of IPDEs}\label{app2}
The following definition is an equivalent one for the solution of the IPDE (\ref{IPDE}) in the case when $f$ does not depend on the component $\zeta$. Basically it is an adaptation to our framework, which is of evolution type, of Definitions 1 and 2 given in \cite{barles} in the stationary case.

\begin{defi}\label{defviscoloc22} Assume that the function $f$ of IPDE (\ref{IPDE}) does not depend on $\z$. Let $u:[0,T]\times \R\rightarrow \R$ be a continuous function which belongs to $\Pi_g$. It is said a viscosity subsolution (resp. supersolution) of $(\ref{IPDE})$ if:
\ms

\noindent (i) $u(T,x)\leq  h(x)$ (resp.  $u(T,x)\geq  h(x)$), $\forall x\in \R$ ; 

\noindent (ii) for any $(t,x)\in (0,T)\times \R$, $\d>0$ and a function
$\varphi\in \ccp $ such that
$u(t,x)=\varphi(t,x)$ and $u-\varphi$ has a global maximum
(resp. minimum) at  $(t,x)$ on $[0,T]\times B(x,
C_\sigma\delta)$, we have:
$$\min\Big \{u(t,x)-\Psi(t,x);-\partial_t \varphi(t,x)-
{\cal L}^1  \varphi(t,x)\\
-{\cal I}^{1,\delta}(t,x,\varphi)-
{\cal I}^{2,\delta}(t,x,u,D_x\varphi(t,x))-f(t,x,u(t,x))\Big \}\leq 0\,\, (resp. \,\geq 0).$$
The function $u$ is said to be a viscosity solution of
$(\ref{IPDE})$ if it is both its viscosity subsolution and supersolution.\qed
\end{defi}
\begin{propo} If $f$ does not depend on $\z$ then Definitions (\ref{defviscosimple}) and (\ref{defviscoloc22}) are equivalent.
\end{propo}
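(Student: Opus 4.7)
The plan is to prove each direction separately; throughout I treat the subsolution case, the supersolution case being obtained by reversing inequalities.

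\medskip

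\emph{Direction Definition \ref{defviscoloc22} $\Rightarrow$ Definition \ref{defviscosimple}.} Let $\varphi \in \ccp$ be admissible in Definition \ref{defviscosimple}, i.e. $\varphi \geq u$ on $\esp$ with $\varphi(t,x) = u(t,x)$. Then $u - \varphi$ has a global maximum on $[0,T]\times B(x,C_\sigma\delta)$ for every $\delta>0$, so Definition \ref{defviscoloc22} applies. Since $u\leq\varphi$ pointwise,
$$\I^{2,\delta}(t,x,D_x\varphi(t,x),u) \leq \I^{2,\delta}(t,x,D_x\varphi(t,x),\varphi),$$
and combining this with $\I^{1,\delta}+\I^{2,\delta}=\I$ together with $f(t,x,u(t,x)) = f(t,x,\varphi(t,x))$ upgrades the inequality from Definition \ref{defviscoloc22} to that of Definition \ref{defviscosimple}.

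\medskip

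\emph{Direction Definition \ref{defviscosimple} $\Rightarrow$ Definition \ref{defviscoloc22}.} Fix $\delta>0$ and $\varphi\in\ccp$ admissible in Definition \ref{defviscoloc22}. I plan to construct a sequence $(\tilde\varphi_n)\subset\ccp$ with three properties: (a) $\tilde\varphi_n \geq u$ on all of $\esp$, making it admissible in Definition \ref{defviscosimple}; (b) $\tilde\varphi_n \equiv \varphi$ on a neighborhood of $\bar B(x,|\sigma(t,x)|\delta)$ --- which contains $\{x+\sigma(t,x)y:|y|\leq\delta\}$ --- so that $\partial_t\tilde\varphi_n(t,x)$, $D_x\tilde\varphi_n(t,x)$, $D^2_{xx}\tilde\varphi_n(t,x)$ and $\I^{1,\delta}(t,x,\tilde\varphi_n)$ coincide with the $\varphi$-values; (c) $\tilde\varphi_n \to u$ pointwise on the complement of $\bar B(x,|\sigma(t,x)|\delta)$, with a uniform polynomial growth bound. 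Such $(\tilde\varphi_n)$ is obtained by gluing $\varphi$ with a mollification $u_n := u*\rho_{1/n}+1/n$ of $u$ via a smooth cutoff $\chi_n$ supported in a thin shell inside $B(x,C_\sigma\delta)$. Applying Definition \ref{defviscosimple} to each $\tilde\varphi_n$, and using that $|y|>\delta$ forces $x+\sigma(t,x)y\notin\bar B(x,|\sigma(t,x)|\delta)$, property (c) combined with Lebesgue's dominated convergence --- whose domination comes from the uniform polynomial bound on the $\tilde\varphi_n$ and the exponential moment hypothesis (\ref{2cdtint}) on $\Pi$, which ensures integrability of any polynomial outside a neighborhood of the origin --- gives $\I^{2,\delta}(t,x,D_x\varphi,\tilde\varphi_n) \to \I^{2,\delta}(t,x,D_x\varphi,u)$. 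Passing to the limit yields Definition \ref{defviscoloc22}.

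\medskip

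The main technical obstacle is the construction of $(\tilde\varphi_n)$ with (a)--(c) holding simultaneously. The uniform bound $|\sigma|\leq C_\sigma$ supplies exactly the geometric room needed: since $|\sigma(t,x)|\delta\leq C_\sigma\delta$, the transition shell of $\chi_n$ can be placed in the annulus $B(x,C_\sigma\delta)\setminus\bar B(x,|\sigma(t,x)|\delta)$, where the local majorization $\varphi\geq u$ coming from Definition \ref{defviscoloc22} forces the convex combination $\chi_n\varphi+(1-\chi_n)u_n$ to dominate $u$ pointwise, giving (a). The degenerate case $|\sigma(t,x)|=C_\sigma$ admits a minor variant where the shell straddles $\partial B(x,C_\sigma\delta)$ and auxiliary radii are chosen outside the at-most-countable set of atoms of $\Pi$, so that the induced error in $\I^{1,\delta}$ also vanishes in the limit.
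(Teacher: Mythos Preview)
Your first direction (Definition~\ref{defviscoloc22} $\Rightarrow$ Definition~\ref{defviscosimple}) is correct and coincides with the paper's argument.

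The second direction contains a genuine gap. Your construction sets $\tilde\varphi_n=\chi_n\varphi+(1-\chi_n)u_n$ with $u_n:=u*\rho_{1/n}+1/n$, and you need property~(a), i.e.\ $\tilde\varphi_n\geq u$ on all of $\esp$. Outside $B(x,C_\sigma\delta)$ this reduces to $u_n\geq u$, but ordinary mollification does \emph{not} preserve one-sided bounds for a merely continuous function of polynomial growth. Take for instance $u(t,x)=x^3$: with an even mollifier one gets $u*\rho_{1/n}(t,x)=x^3+3m_2 x/n^2$ (where $m_2=\int y^2\rho(y)\,dy>0$), so $u_n(t,x)-u(t,x)=3m_2 x/n^2+1/n<0$ whenever $x<-n/(3m_2)$. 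Thus for every $n$ there are points where $u_n<u$, and your global test function $\tilde\varphi_n$ fails to dominate $u$; it is not admissible in Definition~\ref{defviscosimple}. The sentence ``the local majorization $\varphi\geq u$ \dots forces the convex combination \dots to dominate $u$ pointwise'' only covers the transition shell, not the exterior region where $\chi_n=0$.

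The paper avoids this difficulty by decoupling the two tasks you try to do at once. It first extends $\varphi$ to a global test function $\tilde\varphi\in\ccp$ with $u-\tilde\varphi$ having a global maximum at $(t_0,x_0)$ and $\tilde\varphi=\varphi$ near $(t_0,x_0)$ (here $\tilde\varphi$ may be far above $u$ at infinity, e.g.\ a large polynomial glued to $\varphi$). It then invokes a known sandwich lemma (Jakobsen--Karlsen, Lemma~4.7, or Awatif) producing $\varphi_\eps\in\cC^{1,2}$ with $u\leq\varphi_\eps\leq\tilde\varphi$ and $\varphi_\eps\to u$ a.e. The two-sided bound gives both the admissibility of $\varphi_\eps$ and the derivative comparison $D^2_{xx}\varphi_\eps(t_0,x_0)\leq D^2_{xx}\tilde\varphi(t_0,x_0)=D^2_{xx}\varphi(t_0,x_0)$ (together with equality of first derivatives), while the a.e.\ convergence handles the nonlocal term via dominated convergence. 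Your plan can be repaired by replacing the naive mollification with such a sandwiched sequence; the rest of your limiting argument then goes through essentially as written.
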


\begin{proof}  We prove it only for the subsolution property since the
supersolution one is similar.  Let $u$ be a subsolution of equation (\ref{IPDE}) according to Definition \ref{defviscoloc22}. Then for any $x_0\in \R$ we have $u(T,x_0)\leq h(x_0)$. Next let $(t_0,x_0)\in (0,T)\times \R$ and
$\varphi\in \ccp$ such that $u-\varphi$ has a global maximum at $(t_0,x_0)$ in $\esp$. If we set $\bar \varphi(t,x):=\varphi(t,x)+u(t_0,x_0)-
\varphi(t_0,x_0)$, then $\bar \varphi$ belongs also to $\ccp$ and $u-\bar \varphi$ has a global maximum at $(t_0,x_0)$ in $\esp$ and finally verifies
$\bar \varphi(t_0,x_0):=u(t_0,x_0)$. Applying Definition  \ref{defviscoloc22} with $\bar \varphi$ yields:
\begin{align*}
min&\Big\{u(t_0,x_0)-\Psi(t_0,x_0);-\partial_t\varphi(t_0,x_0)-{\cal L}^1  \varphi(t_0,x_0)-{\cal I}^{1,\delta}(t_0,x_0,\varphi)-
{\cal I}^{2,\delta}(t_0,x_0,u(t_0,x_0),D_x\varphi(t_0,x_0))\\&-
f(t_0,x_0,u(t_0,x_0))\Big\}\leq
0
\end{align*} for any $\d>0$.
Next since $(t_0,x_0)\in (0,T)\times \R$ is a global maximum point of
$u-\varphi$, we then have
$$u(t_0,x_0+\sigma(t_0,x_0)y)-u(t_0,x_0)\leq
\varphi(t_0,x_0+\sigma(t_0,x_0)y)-
\varphi(t_0,x_0)$$ which implies that $I^{2,\delta}(t_0,x_0,D_x\varphi(t_0,x_0),u)\leq
I^{2,\delta}(t_0,x_0,D_x\varphi(t_0,x_0),\varphi)$ and then
\begin{align*}
min&\{u(t_0,x_0)-\Psi(t_0,x_0);-\partial_t\varphi(t_0,x_0)-{\cal L}\varphi(t_0,x_0)
-f(t_0,x_0,u(t_0,x_0))\}\leq
0
\end{align*} which means that $u$ is a subsolution for (\ref{IPDE}) according to Definition  \ref{defviscosimple}.
\ms

We are going now to show that if $u$ is a subsolution of (\ref{IPDE}) according to Definition  \ref{defviscosimple} then it is a subsolution according to Definition  \ref{defviscoloc22}. 
Once more let us consider  a continuous function $u$ which belong to $\Pi_g$ which is a subsolution of (\ref{IPDE}) according to Definition  \ref{defviscosimple}.
Then for all $x_0\in \R$, $u(T,x_0)\leq h(x_0)$.
Next let us fix $\d>0$, $(t_0,x_0)\in (0,T)\times
\R$ and finally let us consider $\varphi\in
\ccp$ such that $u-\varphi$ has a
global maximum at $(t_0,x_0)$ on $[0,T]\times B(x_0,C_\sigma\d)$ and
$u(t_0,x_0)=\varphi(t_0,x_0)$. There exists a function $\tilde
\varphi$ which belongs to $\ccp$
such that $u-\tilde{\varphi}$ attains a global maximum in
$(t_0,x_0)$ on $[0,T]\times \R$ and satisfying $\tilde {\varphi}
(s,y) =\varphi (s,y)$, for any $(s,y)$ such that $|(s,y)-(t_0,x_0)|<
\frac{C_\sigma \delta}{2}.$ Consequently we have also \be
\label{egalites2}\partial_t\tilde{\varphi}(t_0,x_0)=\partial_t\varphi(t_0,x_0),\,\,
D_x\tilde{\varphi}(t_0,x_0)=D_x\varphi(t_0,x_0),\,\,
D^2_{xx}\tilde{\varphi}(t_0,x_0)=D^2_{xx}\varphi(t_0,x_0),\,\,
u(t_0,x_0)=\tilde{\varphi}(t_0,x_0).\ee Next for any $\eps >0$,
there exists $\varphi_\eps$ element of $\cC^{1,2}([0,T]\times \R)$
such that $u\leq \varphi_\eps\leq \tilde \varphi$ and
$\varphi_\eps\rw u$ as $\eps\rw 0$, a.e. (see e.g. Lemma 4.7 in
\cite{jakobsen} or \cite{awatif}). It implies that
$u-\varphi_\eps$ and $\varphi_\eps-\tilde \varphi$ have a global
maximum at $(t_0,x_0)$ on $\esp$. Therefore, on the one hand, we
have
\be \label{egaliteinterm}\partial_t{\varphi_\eps}(t_0,x_0)=\partial_t\tilde\varphi(t_0,x_0),\,\,
D_x{\varphi_\eps}(t_0,x_0)=D_x\tilde\varphi(t_0,x_0),\,\,
D^2_{xx}{\varphi_\eps}(t_0,x_0)\leq D^2_{xx}\tilde\varphi(t_0,x_0)\ee
and, on the other hand, by Definition  \ref{defviscosimple} it holds
\be \label{egalites}\bal
min\Big \{u(t_0,x_0)-\Psi(t_0,x_0)\,;
-\partial_t\varphi_\eps(t_0,x_0)-
{\cal L}^1\varphi_\eps(t_0,x_0)-
{\cal I}(t_0,x_0,\varphi_\eps)
-f(t_0,x_0,u(t_0,x_0))\Big \}\leq
0.\ea
\ee
Recall now the definition of $\cL^1$ in (\ref{operateur}) and taking into account of (\ref{egalites2}) and (\ref{egaliteinterm}) to obtain
\be \label{ineqinterm2}
{\cal L}^1\varphi_\eps(t_0,x_0)\leq {\cal L}^1\varphi(t_0,x_0).
\ee
On the other hand
\be\label{ineqinterm1}
\begin{array}{ll}
{\cal I}(t_0,x_0,\varphi_\eps)&=
{\cal I}^{1,\frac{\d}{2}}(t_0,x_0,\varphi_\eps)+
{\cal I}^{2,\frac{\d}{2}}(t_0,x_0,D_x\varphi_\eps(t_0,x_0),\varphi_\eps)\\&\leq {\cal I}^{1,\frac{\d}{2}}(t_0,x_0,\tilde \varphi)+{\cal I}^{2,\frac{\d}{2}}(t_0,x_0,D_x\varphi(t_0,x_0),\varphi_\eps)\\
&={\cal I}^{1,\frac{\d}{2}}(t_0,x_0, \varphi)+{\cal I}^{2,\frac{\d}{2}}(t_0,x_0,D_x\varphi(t_0,x_0),\varphi_\eps).
\end{array}\ee
Plug now (\ref{ineqinterm2}) and (\ref{ineqinterm1}) in (\ref{egalites}) to obtain
\be \label{egalites2x}\bal
min\Big \{u(t_0,x_0)-\Psi(t_0,x_0)\,;\\\qq-\partial_t\varphi(t_0,x_0)-
{\cal L}^1\varphi(t_0,x_0)-{\cal I}^{1,\frac{\d}{2}}(t_0,x_0, \varphi)
-{\cal I}^{2,\frac{\d}{2}}(t_0,x_0,D_x\varphi(t_0,x_0),\varphi_\eps)
-f(t_0,x_0,u(t_0,x_0))\Big \}\leq
0.\ea
\ee
Take now the limit as $\eps \rw 0$ in (\ref{egalites2x}), using the Lebesgue dominated convergence theorem and by the following inequality (which is valid since $u\leq \varphi$ in $[0,T]\times B(x_0,C_\sigma \d)$ and $u(t_0,x_0)=\varphi(t_0,x_0)$)
$$\begin{array}{l}
\int_{\frac{\d}{2}<|z|\leq \d}(\varphi(t_0,x_0+\sigma(t_0,x_0)z)-\varphi(t_0,x_0)-
D_x\varphi(t_0,x_0)
\sigma(t_0,x_0)z\}d\Pi(z)\geq\\ \qq\qq
\int_{\frac{\d}{2}<|z|\leq \d}(u(t_0,x_0+\sigma(t_0,x_0)z)-u(t_0,x_0)-
D_x\varphi(t_0,x_0)
\sigma(t_0,x_0)z\}d\Pi(z)
\end{array}
$$
we obtain
\begin{align*}
min&\Big\{u(t_0,x_0)-\Psi(t_0,x_0)\,;-\partial_t\varphi(t_0,x_0)-{\cal L}^1  \varphi(t_0,x_0)\\
&-{\cal I}^{1,\delta}(t_0,x_0,\varphi)-
{\cal I}^{2,\delta}(t_0,x_0,D_x\varphi(t_0,x_0),u)-
f(t_0,x_0,u(t_0,x_0))\Big\}\leq
0
\end{align*}
which is the desired result.
\end{proof}

Similarly, there is another equivalent defintion for system of IPDEs (\ref{FIPDE}) which is:

\begin{defi} \label{seconddef} A  function $(u_1,\cdots,u_m):[0,T]\times \R\rightarrow \R^m\in \Pi_g$ such that for any $i\in A$, $u_i$ is usc (resp. lsc), is said to be a viscosity subsolution  (resp. supersolution) of (\ref{FIPDE}) if for any $i\in A$,

(i) $u_i(T,x_0)\leq h_i(x_0)$ (resp. $u_i(T,x)\geq h_i(x)$), $\forall x_0\in \R$;

(ii) for any $(t_0,x_0)\in (0,T)\times \R$, $\d>0$ and a function
$\varphi\in \ccp $ such that
$u_i(t_0,x_0)=\varphi(t_0,x_0)$ and $u_i-\varphi$ has a global maximum
(resp. minimum) at  $(t_0,x_0)$ on $[0,T]\times B(x_0,
C_\sigma\delta)$, we have
\begin{align*}
min&\Big \{u_i(t_0,x_0)-\max\limits_{j\in A_i}(u_j(t_0,x_0)-g_{ij}(t_0,x_0));-\partial_t\varphi(t_0,x_0)-{\cal L}^1
    \varphi(t_0,x_0)-\I^{1,\delta}(t_0,x_0,\varphi)-
    I^{2,\delta}(t_0,x_0,D_x\varphi(t_0,x_0),u_{i})\\
&-f_i(t_0,x_0,u_1(t_0,x_0),\cdots,u_{i-1}(t_0,x_0),u_i(t_0,x_0),\cdots,u_m(t_0,x_0))\Big\}\leq
0~~(resp.\geq 0).
\end{align*}
The functions $(u_i)^m_{i=1}$ is called a viscosity
solution of \eqref{FIPDE} if
$(u_{i*})^m_{i=1}$ and $(u^*_{i})^m_{i=1}$ are respectively viscosity
supersolution and viscosity subsolution of \eqref{FIPDE}.\end{defi}

We then have the following result whose proof is just an adaptaion of the previous one and then is left for the reader.

\begin{propo}
Definitions (\ref{seconddef}) and  (\ref{defvisco}) are equivalent.\qed
\end{propo}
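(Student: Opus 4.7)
The plan is to closely mimic the argument of the preceding Proposition, since the additional obstacle term $u_i(t_0,x_0)-\max_{j\in A_i}(u_j(t_0,x_0)-g_{ij}(t_0,x_0))$ appears unaltered in both Definitions \ref{defvisco} and \ref{seconddef} and therefore requires no separate treatment. I will only discuss the subsolution equivalence for a fixed index $i\in A$; the supersolution case and the boundary condition at $t=T$ will be handled symmetrically.

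For the easy direction, starting from Definition \ref{seconddef} and aiming at Definition \ref{defvisco}, given $\varphi\in \ccp$ such that $u_i-\varphi$ attains a global maximum at $(t_0,x_0)\in(0,T)\times\R$, I will normalize by replacing $\varphi$ with $\varphi+u_i(t_0,x_0)-\varphi(t_0,x_0)$, which stays in $\ccp$, so that $\varphi(t_0,x_0)=u_i(t_0,x_0)$. The global maximum condition then yields $\I^{2,\delta}(t_0,x_0,D_x\varphi(t_0,x_0),u_i)\leq \I^{2,\delta}(t_0,x_0,D_x\varphi(t_0,x_0),\varphi)$ for every $\delta>0$, and Definition \ref{seconddef} combined with $\I^{1,\delta}+\I^{2,\delta}=\I$ delivers the inequality of Definition \ref{defvisco}. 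For the reverse direction, given $\delta>0$ and $\varphi\in \ccp$ with $u_i(t_0,x_0)=\varphi(t_0,x_0)$ realizing the global maximum of $u_i-\varphi$ on $[0,T]\times B(x_0,C_\sigma\delta)$, I will first extend $\varphi$ to $\tilde\varphi\in \ccp$ so that $u_i-\tilde\varphi$ has a global maximum on $\esp$ at $(t_0,x_0)$ and $\tilde\varphi\equiv\varphi$ on $[0,T]\times B(x_0,C_\sigma\delta/2)$, preserving the value and all first and second $x$-derivatives at $(t_0,x_0)$. Then, invoking Lemma 4.7 of \cite{jakobsen}, I will approximate by $\varphi_\varepsilon\in\cC^{1,2}([0,T]\times\R)$ satisfying $u_i\leq\varphi_\varepsilon\leq\tilde\varphi$, $\varphi_\varepsilon\rw u_i$ almost everywhere, with matching first-order derivatives at $(t_0,x_0)$ and $D^2_{xx}\varphi_\varepsilon(t_0,x_0)\leq D^2_{xx}\tilde\varphi(t_0,x_0)$. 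Each $\varphi_\varepsilon$ is an admissible test function for Definition \ref{defvisco}, so the subsolution inequality holds with $\varphi_\varepsilon$ in place of $\varphi$.

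The hard part will be passing to the limit $\varepsilon\rw 0$. My plan is to split $\I(t_0,x_0,\varphi_\varepsilon)=\I^{1,\delta/2}(t_0,x_0,\varphi_\varepsilon)+\I^{2,\delta/2}(t_0,x_0,D_x\varphi_\varepsilon(t_0,x_0),\varphi_\varepsilon)$, use $\varphi_\varepsilon\leq\tilde\varphi=\varphi$ near $(t_0,x_0)$ together with the Lebesgue dominated convergence theorem to obtain a bound whose local part involves $\varphi$ and whose non-local part involves $u_i$, and finally use the inequality $\varphi(t_0,x_0+\sigma(t_0,x_0)z)-\varphi(t_0,x_0)\geq u_i(t_0,x_0+\sigma(t_0,x_0)z)-u_i(t_0,x_0)$ on the outer ring $\delta/2<|z|\leq\delta$ (valid because $u_i\leq\varphi$ on $[0,T]\times B(x_0,C_\sigma\delta)$) to regroup these contributions into $\I^{1,\delta}(t_0,x_0,\varphi)+\I^{2,\delta}(t_0,x_0,D_x\varphi(t_0,x_0),u_i)$, matching exactly the form required by Definition \ref{seconddef}. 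The obstacle term and the argument $f_i(t_0,x_0,u_1(t_0,x_0),\ldots,u_m(t_0,x_0))$ pass to the limit trivially, since $\varphi_\varepsilon(t_0,x_0)=u_i(t_0,x_0)$ and the remaining components $u_j(t_0,x_0)$ are unchanged. The only genuine difficulty is this book-keeping between the radii $\delta/2$ and $\delta$ in the non-local splitting, which is handled exactly as in the proof of the preceding Proposition.
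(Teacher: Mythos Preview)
Your proposal is correct and follows exactly the approach the paper intends: the paper explicitly states that this proof ``is just an adaptation of the previous one and then is left for the reader,'' and your plan carries out precisely that adaptation, noting correctly that the obstacle term $u_i(t_0,x_0)-\max_{j\in A_i}(u_j(t_0,x_0)-g_{ij}(t_0,x_0))$ and the evaluation $f_i(t_0,x_0,(u_k(t_0,x_0))_{k\in A})$ are unaffected by the test-function manipulations and pass through unchanged. The only additional remark worth making is that in Definition~\ref{seconddef} the $u_i$ are merely usc (rather than continuous as in Definition~\ref{defviscoloc22}), but the Jakobsen--Karlsen approximation lemma you invoke applies to usc functions, so this causes no difficulty.
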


\end{document}